\newtheorem{theorem}{Theorem}[section]
\newtheorem{thm}{Theorem}[section]
\newtheorem{lemma}{Lemma}[section]
\newtheorem{algorithm}[thm]{Algorithm}
\newtheorem{remark}[thm]{Remark}
\newcommand{\vertiii}[1]{{\left\vert\kern-0.25ex\left\vert\kern-0.25ex\left\vert #1
		\right\vert\kern-0.25ex\right\vert\kern-0.25ex\right\vert}}
\newcommand{\aIPh}[2]{a_h^{IP}\left( #1 , #2 \right)}
\newcommand{\normiii}[1]{{\left\vert\kern-0.25ex\left\vert\kern-0.25ex\left\vert #1
		\right\vert\kern-0.25ex\right\vert\kern-0.25ex\right\vert}}
\journal{Journal of Computational and Applied Mathematics}
\begin{document}

\begin{frontmatter}



\title{Analysis of continuous data assimilation with large (or even infinite) nudging parameters}

\author{Amanda E. Diegel\corref{label1}}
\author{ Xuejian Li\fnref{label2}}
\author{ Leo G. Rebholz\fnref{label3}}
\cortext[label1]{Department of Mathematics and Statistics, Mississippi State University, MS, 39762; partially supported by NSF grant DMS 2110768.}
\fntext[label2]{School of Mathematical and Statistical Sciences, Clemson University, Clemson, SC, 29364, xuejial@clemson.edu; partially supported by NSF grant DMS 2152623.}
\fntext[label3]{School of Mathematical and Statistical Sciences, Clemson University, Clemson, SC, 29364, rebholz@clemson.edu; partially supported by NSF grant DMS 2152623.}


\begin{abstract}
This paper considers continuous data assimilation (CDA) in partial differential equation (PDE) discretizations where nudging parameters can be taken arbitrarily large.  We prove that { solutions are long-time optimally accurate}  for such parameters for the heat and Navier-Stokes equations (using implicit time stepping methods), with error bounds that do not grow as the nudging parameter gets large.  Existing theoretical results either prove optimal accuracy but with the error scaled by the nudging parameter, or suboptimal accuracy that is independent of it.  The key idea to the improved analysis is to decompose the error based on a weighted inner product that incorporates the (symmetric by construction) nudging term, and prove that the projection error from this weighted inner product is optimal and independent of the nudging parameter.  We apply the idea to BDF2 - finite element discretizations of the heat equation and Navier-Stokes equations to show that with CDA, they will admit optimal long-time accurate solutions independent of the nudging parameter, for nudging parameters large enough.  Several numerical tests are given for the heat equation, fluid transport equation, Navier-Stokes, and Cahn-Hilliard that illustrate the theory.
\end{abstract}



\begin{keyword}
continuous data assimilation \sep long time accuracy \sep finite element method \sep projection properties



\end{keyword}

\end{frontmatter}


	\section{Introduction}
	Data assimilation is a broad topic that generally refers to incorporating observations into a given dynamical system to better estimate some relevant quantities of interest. One common use of data assimilation is to generate an accurate initial condition for the dynamical system so that its future state can be accurately predicted. Data assimilation was originally made popular in the weather forecasting community \cite{Hoke_Anthes_1976_MWR,Anthes_1974_JAS,Lewis_Lakshmivarahan_2008}, and various categories of data assimilation methods exist, such as Kalman filtering and particle filtering \cite{Kalman_1960_JBE,G2009,AMGC2002,FK2018}, 3D/4D variational methods \cite{C1994,C1998,R2007,LGHL2023}, and nudging methods \cite{zou_navon_ledimet_1992,DJ2005,AN2012}. In the past decade, the continuous data assimilation (CDA) method developed by Azouani, Olson, and Titi \cite{AOT14}  has rapidly become popular due to its robustness, effectiveness, and strong mathematical foundation. 
	
	CDA works by adding a nudging term (also called feedback control or penalty term) that assimilates observations into  { dynamical} systems and attempts to drive the assimilated state to the true state. 
	In general, CDA applied to a time dependent PDE 
	\begin{align*}
	u_t +F(u) & =f, \\
	u(0) = u_0,
	\end{align*}
	is written as (suppressing boundary conditions for the moment)
	\begin{align}\label{C1}
		\frac{\partial w}{\partial t}+F(w)+\mu (I_Hw-I_Hu) & =f, \\
		w(0)=0.
	\end{align} 
Here, $\mu (I_Hw-I_Hu)$ is the nudging term, $u$ is the partially observed true solution, $I_H$ is an interpolation or observation operator associated with the measurement location with $H$ representing the sparsity of observations, 
$\mu$ is the nudging parameter emphasizing the accuracy of observations, and $F$ is an operator specific to the PDE at hand. We note that an initial condition of $0$ is chosen for $w$, although long-time accuracy is typically achieved in { many} CDA analyses considering arbitrary initial conditions. Of course, the more accurate the initial condition, the quicker the convergence, in general.  While CDA is similar to traditional nudging methods, a fundamental difference is the use of the interpolation operator which is a minor modification but has a critical impact. Researchers have proved in a variety of settings and systems that by using sparse observations and appropriate $I_H$, applying CDA leads to an assimilated solution that converges to the true solution (up to optimal discretization error) exponentially fast in time, independent of the initial condition (see e.g.\cite{CGJP22,CHL20,DR22,FJT15,Bessaih_Olson_Titi_2015,Albanez_Nussenzveig_Lopes_Titi_2016,Biswas_Hudson_Larios_Pei_2017,Jolly_Martinez_Olson_Titi_2018_blurred_SQG} and references therein).  

In addition to the strong mathematical foundation, another reason CDA is attractive is its straightforward implementation. The CDA computation is carried out the same way as solving the associated evolutionary PDE, only with extra attention to the nudging term that can be easily handled in most cases.  Studies have been performed using the CDA implementation based on different interpolants $I_H$ and discretization schemes that make implementation simple (e.g. only altering the diagonal of the system matrix at each time step) \cite{RZ21,ZRSI19,Larios_Victor_2019,IMT20,GN20,GNT18}, and maintain the expected exponential convergence of discrete CDA to an optimally accurate solution.

The goal of this paper is to prove optimal long-time convergence of discrete CDA with no upper restriction on the nudging parameter $\mu$ (i.e. no scaling of the error with $\mu$).  There have been several studies that have shown excellent results with CDA with nudging parameter of 1000 and larger applied to Navier-Stokes and Cahn-Hilliard solvers where implicit time stepping is used \cite{RZ21,GLRVZ21_CDA,DR22,HRV24}; some of these works also test varying $\mu$ and show no deterioration of accuracy as $\mu$ gets larger.  However, to date, no theoretical results exist to explain these good results; current known results for CDA that are optimal either depend on $\mu$ or have an upper bound restriction on $\mu$.  Allowing $\mu$ to be very large can bring convenience of CDA implementation by allowing for a direct Dirichlet enforcement assimilation. Very large $\mu$ is numerically equivalent in the linear systems to a direct enforcement of $w(x_i)=u(x_i)$ at all measurement nodes $x_i$ associated with the determination of $I_Hu$ and $I_Hw$. Such an implementation has been mentioned in \cite {L2023}, but without any proof of accuracy. In this work, we provide a rigorous analysis to support this result in the context of CDA for evolutionary PDEs. We first propose projection operators tailored for use with CDA, and prove an optimal estimate for its approximation error, independent of the nudging parameter $\mu$.  With these new projections, we are able to prove that CDA with arbitrarily large parameters find optimal long-time accurate solutions with respect to the $L^2-$norm with error bounds independent of $\mu$ for BDF2 - finite element discretizations of the heat equation and incompressible Navier-Stokes equations.  We note that the tools and techniques developed and used for the heat and Navier-Stokes equations are extendable to many dissipative PDEs.

The paper is organized as follows. In Section \ref{P}, we introduce notation and provide mathematical preliminaries for a smooth analysis to follow. In Section \ref{C}, we state the new CDA Poisson and CDA Stokes projections and analyze their properties. In Section \ref{H}, we present the CDA convergence analysis for the discretized heat equation. In Section \ref{N}, we focus on a CDA convergence analysis of discretized Navier-Stokes equations. In Section \ref{sec:num-exp}, we provide several numerical tests that support the theory. Finally, we draw conclusions in Section \ref{Con}.

	\section{Preliminaries}\label{P}

Throughout this paper, $C$ will denote a generic constant, possibly changing at each instance, that is independent of mesh size $h$, time step $\Delta t$, nudging parameter $\mu$, and problem data. We consider $\Omega$ 
	 to be a generic polygonal or polyhedral domain with smooth boundary $\partial \Omega$. Let $H^{k}(\Omega)$ denote the Sobolev space $W^{k,2}(\Omega)$ and  $L^2(\Omega)=W^{0,2}(\Omega)$. We use $(\cdot,\cdot)$ to denote the $L^2$ inner product that induces the $L^2$ norm $\|\cdot\|$,  while all other inner products or norms will be appropriately subscripted. Define the following function spaces:
	\begin{align*}
		& H^1_0\left(\Omega\right):=
		\{v\in H^1\left(\Omega\right): v=0~~\text{on}~ \partial\Omega\},\\
		& X:= \left( H^1_0\left(\Omega\right)\right)^d, \quad d=2,3,\\
		&Q:=\{v\in {L}^2(\Omega): \int_{\Omega}vdx=0\},\\
			& V:=
		\{v\in X:  (\nabla \cdot v,q)=0\ \forall q\in Q\}.
	\end{align*}

Recall that the Poincar\'{e} inequality holds on $X$: there exists a constant $C_P$ dependent only on the domain $\Omega$ satisfying 
\[
\| v \| \le C_P \|\nabla v \|,
\]
 for all $v\in X$.

\subsection{Finite element preliminaries}
We consider finite element (FE) methods for the spatial discretization of the CDA algorithm. Denote by $\tau_h(\Omega)$ a shape-regular and conforming triangulation of the interested domain $\Omega$. We use $Y_h=H^1_0\left(\Omega\right)
\cap P_k(\tau_h(\Omega))$ as the FE discretization space for the heat equation. Let $X_h\times Q_h\subset X\times Q$ 
be a pair of inf-sup stable FE spaces for Navier-Stokes equation, such as $X_h=X\cap P_{k}(\tau_h(\Omega))$ and $Q_h=Q\cap P_{k-1}(\tau_h(\Omega))$ as Taylor-Hood or Scott-Vogelius elements.

\subsection{BDF2 preliminaries}

The time stepping method used in our analysis and most of our numerical tests is BDF2, and we give here some preliminary results for BDF2 analysis.  We will utitlize the G-norm \cite{HairerWanner}: $\|x\|^2_G:=(x,Gx)_{\mathbb{R}^2}$, where $G=\begin{bmatrix}\frac{1}{2}&-1\\-1&\frac{5}{2}\end{bmatrix}$. It is easy to show that for $x\in (L^2(\Omega))^2$, the G-norm is equivalent to the $(L^2(\Omega))^2$ norm:
\begin{align}\label{G-norm}
	C_{\ell} \|x\|_G\leq \|x\|\leq C_u\|x\|_G.
\end{align}
Here, $C_{\ell}=3-2\sqrt{2}$ and $C_u=3+2\sqrt{2}$. For $v^i\in L^2, i=n-1,n,n+1$, we have the identity \cite{HairerWanner}:
\begin{align}\label{BDF2i}
	\begin{split}
	\left(\frac{1}{2}\left(3v^{n+1}-4v^n+v^{n-1}\right),v^{n+1}\right)&=\frac{1}{2}\left(\|[v^{n+1};v^n]\|_{{G}}^2-\|[v^{n};v^{n-1}]\|_{{G}}^2\right)\\&+\frac{1}{4}\|v^{n+1}-2v^n+v^{n-1}\|^2.
	\end{split}
\end{align}

\subsection{CDA preliminaries}

Denote by 
$\tau_H(\Omega)$ a coarse mesh of $\Omega$, whose nodes are locations of the measurements or observables of the true
solution. Assume $h<<H$ and the nodes of mesh $\tau_H(\Omega)$ are a subset of the vertices of the mesh $\tau_h(\Omega)$
{ and that}
if $I_h$ is the nodal interpolant operator on $\tau_h$, then 
$I_H I_h u = I_H u$ for any $u\in H^1(\Omega)$.
Also, $I_H$ must satisfy the
usual CDA estimates: there exists a constant $C_I$ independent of $H$ satisfying
\begin{align}
	\label{interpolationi}	\|I_Hv-v\|\leq C_IH \| \nabla v\|,~~~~\forall v\in H^1(\Omega),\\
	\label{interpolationi2}	\|I_Hv\|\leq C_I \|v\|,~~~~\forall v\in H^1(\Omega). 
\end{align}
We note that if $I_H$ were the nodal interpolant, then $C_I$ in \eqref{interpolationi} { could be large due to an inverse dependence on $h$ for $v\in X_h$ or $Y_h$ and so this is not an ideal interpolant choice in most cases}.  Instead, algebraic nudging \cite{RZ21}, the Scott-Zhang interpolant, or the $L^2$ projection onto the coarse space could be used \cite{GN20}.

We also need the following lemma for the long-time analysis that is proven in \cite{LLC2019}.
\begin{lemma}\label{exp}
	Suppose constants $r>1$ and $B\geq 0$. If a real number sequence $\{a_n\}_{n\geq 0}$ satisfies
	\begin{align}
		ra_{n+1}\leq a_n+B,
	\end{align}
	then 
	\begin{align}
		a_{n+1}\leq a_0\left(\frac{1}{r}\right)^{n+1}+\frac{B}{r-1}.
	\end{align}
\end{lemma}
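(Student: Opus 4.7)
The plan is to prove the lemma by straightforward induction on $n$, since the recurrence is linear and the candidate closed form separates cleanly into a geometric decay part and a steady-state part. An equally valid route would be to iterate the bound $r$ times and sum a geometric series, but induction avoids writing out a finite geometric sum and the telescoping is cleaner.

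First I would dispose of the base case $n=0$. Dividing the hypothesis $r a_1 \le a_0 + B$ by $r$ yields $a_1 \le a_0/r + B/r$. Since $r>1$ and $B \ge 0$, we have $B/r \le B/(r-1)$, and the desired bound $a_1 \le a_0(1/r) + B/(r-1)$ follows. This essentially verifies that the chosen steady-state constant $B/(r-1)$ is large enough to absorb the per-step forcing $B/r$.

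Next I would carry out the inductive step. Assume the bound holds at index $n$, i.e.\ $a_n \le a_0 (1/r)^n + B/(r-1)$. Substituting into the hypothesis and dividing by $r$ gives
\begin{equation*}
a_{n+1} \le \frac{a_n}{r} + \frac{B}{r} \le a_0 \left(\frac{1}{r}\right)^{n+1} + \frac{B}{r(r-1)} + \frac{B}{r}.
\end{equation*}
The geometric factor is exactly what we want, and the two forcing contributions combine by placing them over a common denominator:
\begin{equation*}
\frac{B}{r(r-1)} + \frac{B}{r} = \frac{B + B(r-1)}{r(r-1)} = \frac{Br}{r(r-1)} = \frac{B}{r-1}.
\end{equation*}
This completes the induction and yields the claimed inequality for all $n \ge 0$.

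Honestly, there is no real obstacle here: the lemma is a one-line telescoping calculation dressed up as induction, and the only thing to be careful about is the algebraic identity $\frac{1}{r(r-1)} + \frac{1}{r} = \frac{1}{r-1}$, which is exactly what makes the steady-state constant $B/(r-1)$ a fixed point of the iteration $x \mapsto x/r + B/r$. The hypothesis $r>1$ is used only to guarantee $r-1>0$ so that division is legitimate and the geometric factor $(1/r)^{n+1}$ decays to zero.
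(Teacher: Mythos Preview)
Your proof is correct. Note, however, that the paper does not actually supply its own proof of this lemma: it simply states the result and cites \cite{LLC2019} for the argument. Your induction (equivalently, iterating the recurrence and summing the geometric series) is the standard way to establish this kind of discrete Gronwall-type bound, and there is nothing further to compare.
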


\section{CDA Poisson and CDA Stokes projections}\label{C}

In this section we will present and analyze Poisson and Stokes interpolants tailored to use with CDA.  The estimates derived in this section are the keys that allow for the optimal long-time accuracy results that are independent of the nudging parameter to hold in the sections that follow.

\subsection{CDA Poisson projection}
The CDA Poisson projection is defined as follows: Given $u\in H^1_0(\Omega)$ and $I_H u$, find $u_h:= P_hu\mapsto u_h$ in $Y_h$ satisfying
\begin{align}\label{NPP}
	\kappa (\nabla u_h,\nabla v_h ) + \mu(I_H u_h, I_H v_h) = \kappa (\nabla u,\nabla v_h) + \mu(I_H u,I_H v_h), \quad \forall v_h \in Y_h,
\end{align}
where $\kappa > 0$ is a positive constant.

\begin{lemma}\label{cdapoisson}
	Given $u\in H^1_0(\Omega)\cap H^{k+1}(\Omega)$ and $\mu\ge 0$,  $P_h u$ satisfies
\begin{align}
	\| \nabla (u - P_h u) \| &\le Ch^k \| u \|_{H^{k+1}(\Omega)},\\
	\| u - P_h u \| &\le Ch^{k+1} \| u \|_{H^{k+1}(\Omega)},
\end{align}
	where $C$ is independent of $u$, $h$, and $\mu$.

\end{lemma}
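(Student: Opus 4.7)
The plan is to exploit the fact that the CDA Poisson projection coincides with the orthogonal projection onto $Y_h$ with respect to the symmetric bilinear form
\[
a(u,v) := \kappa(\nabla u,\nabla v) + \mu(I_H u, I_H v),
\]
and to leverage the reproducing identity $I_H I_h u = I_H u$ of the nodal interpolant $I_h$ to cancel every $\mu$-dependent contribution in the relevant estimates. Subtracting \eqref{NPP} from the trivial identity obtained with $u$ in place of $u_h$ yields the Galerkin orthogonality $a(u - P_h u, v_h) = 0$ for all $v_h \in Y_h$. Since $a$ is symmetric and $H^1_0$-coercive with constant $\kappa/C_P^2$ independent of $\mu$ (via Poincar\'e), $P_h u$ is the Cea-optimal approximant in the energy norm $\|\cdot\|_a := \sqrt{a(\cdot,\cdot)}$.

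For the $H^1$ bound, I would apply Cea's lemma with the test element $w_h = I_h u$, the nodal $P_k$-interpolant (well-defined since $H^{k+1}\hookrightarrow C^0$ for $d\le 3$). By the standing assumption $I_H I_h u = I_H u$, the $\mu$-contribution to $\|u - I_h u\|_a^2$ \emph{vanishes identically}, leaving
\[
\kappa \|\nabla(u-P_h u)\|^2 + \mu \|I_H(u-P_h u)\|^2 \le \kappa \|\nabla(u - I_h u)\|^2 \le C\kappa h^{2k}\|u\|_{H^{k+1}}^2.
\]
Dividing by $\kappa$ yields the gradient estimate with a constant independent of $\mu$.

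For the $L^2$ bound I would use an Aubin--Nitsche duality adapted to the modified form. Let $\phi \in H^1_0(\Omega)$ solve $a(\phi, v) = (e, v)$ for all $v \in H^1_0$, where $e := u - P_h u$. Then $\|e\|^2 = a(e, \phi)$ and, inserting $I_h \phi$ by Galerkin orthogonality, $\|e\|^2 = a(e, \phi - I_h \phi)$. The same reproducing property $I_H I_h \phi = I_H \phi$ kills the $\mu$-term in the bilinear form, leaving only
\[
\|e\|^2 = \kappa(\nabla e, \nabla(\phi - I_h \phi)) \le \kappa \|\nabla e\| \cdot Ch\,|\phi|_{H^2} \le C h^{k+1}\|u\|_{H^{k+1}} \cdot \kappa |\phi|_{H^2}.
\]

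The hard part will be closing the argument by establishing $\kappa|\phi|_{H^2} \le C\|e\|$ uniformly in $\mu$, since $\phi$ weakly solves $-\kappa \Delta \phi + \mu I_H^*I_H \phi = e$, whose zeroth-order coefficient scales with $\mu$. The energy identity only gives $\sqrt{\mu}\,\|I_H \phi\| \le C\|e\|/\sqrt{\kappa}$, which would inflate a naive elliptic regularity bound by a factor of $\sqrt{\mu/\kappa}$. I would close this gap by exploiting the additional structure of $I_H$: its range lies in a fixed finite-dimensional coarse space, so $\mu I_H^*I_H$ is a bounded finite-rank perturbation of the Laplacian, and a sharper estimate on $\mu\|I_H^*I_H \phi\|$ should be obtainable by combining the coarse interpolation bounds \eqref{interpolationi}--\eqref{interpolationi2} with the energy identity in a bootstrap fashion, thereby producing $\mu$-independent $H^2$ control of $\phi$ and the optimal $L^2$ rate.
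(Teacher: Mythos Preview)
Your $H^1$ argument is correct and coincides with the paper's: both exploit $I_H(u-I_hu)=0$ so that the nodal interpolant is a $\mu$-free competitor in the energy norm.

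For the $L^2$ bound there is a genuine gap. You base the duality step on the adjoint of the \emph{full} form $a$, so your dual function satisfies $-\kappa\Delta\phi+\mu I_H^*I_H\phi=e$. Eliminating the $\mu$-term in $a(e,\phi-I_h\phi)$ via $I_H I_h\phi=I_H\phi$ is fine, but the closure $\kappa|\phi|_{H^2}\le C\|e\|$ uniformly in $\mu$ does not follow from what you wrote. The claim that ``$\mu I_H^*I_H$ is a bounded finite-rank perturbation'' is wrong as stated: it is finite-rank, but its operator norm is of order $\mu$. The energy identity only gives $\sqrt{\mu}\,\|I_H\phi\|\le C\kappa^{-1/2}\|e\|$, one power of $\sqrt{\mu}$ short of what is needed in the elliptic-regularity bound $\kappa|\phi|_{H^2}\le C\|e-\mu I_H^*I_H\phi\|$. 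A $\mu$-uniform bound on $\mu\|I_H\phi\|$ is in fact obtainable, but it requires an additional lifting argument (testing against some $v\in H^1_0$ with $I_Hv=I_H\phi$ and $\|\nabla v\|\le CH^{-1}\|I_H\phi\|$), not the interpolation bounds \eqref{interpolationi}--\eqref{interpolationi2} you invoke; this step is neither carried out nor routine, and the resulting constant depends on $H$.

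The paper avoids this difficulty by a different choice of dual. It first solves the \emph{pure} Poisson problem $-\Delta\hat\xi=e$, so that $H^2$ control by $\|e\|$ is immediate and $\mu$-free, and then subtracts a \emph{harmonic} function $\psi$ matching $\hat\xi$ at the measurement nodes, so that $\xi:=\hat\xi-\psi$ still satisfies $-\Delta\xi=e$ but now additionally $I_H\xi=0$. Galerkin orthogonality with $v_h=I_h\xi$ then kills the $\mu$-term because $I_HI_h\xi=I_H\xi=0$, and one obtains $\|e\|^2=(\nabla e,\nabla(\xi-I_h\xi))\le Ch\|\nabla e\|\|e\|$ with no $\mu$-dependent regularity question ever arising. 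This harmonic correction to the standard Poisson dual is the idea your argument is missing.
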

\begin{proof}

We first rewrite the equation (\ref{NPP}) as follows:
	\begin{equation}
		(\nabla (u - u_h),\nabla v_h) + \frac{\mu}{\kappa} (I_H (u-u_h),I_H v_h)=0, \quad \forall v_h\in Y_h. \label{L2}
	\end{equation}
	Set $e=u- u_h = (u - I_h u) + (I_h u - u_h) =: \eta + \phi$, where $I_h u \in Y_h$ is the nodal interpolant of $u$.  Properties of $I_h$ and $I_H$ imply that 
	\[
	I_H \eta = I_H ( u - I_h u ) = I_Hu - I_HI_hu = I_H u - I_H u = 0.
	\]  
	Using this, after choosing $v_h = \phi$ in \eqref{L2}, then yields
	\begin{equation}\label{break}
	 \| \nabla \phi \|^2 + \frac{\mu}{\kappa} \| I_H \phi \|^2 =  (\nabla \eta,\nabla \phi) \le \frac{1}{2} \| \nabla \phi\|^2 + \frac{1}{2} \| \nabla \eta \|^2,
	\end{equation}
	thanks to Young's inequality.  Reducing (\ref{break}) gives
	\begin{equation}
		 \| \nabla \phi \|^2 + \frac{2\mu}{\kappa} \| I_H \phi \|^2 \le \| \nabla \eta \|^2.
	\end{equation}
With the triangle inequality and standard approximation theory, we get
	\begin{equation}
		\| \nabla e \|^2 + \frac{\mu}{\kappa} \| I_H e \|^2 \le 4 \| \nabla (u - I_h u) \|^2 \le Ch^{2k} \| u \|_{H^{k+1}(\Omega)}^2, \label{L4}
	\end{equation}
	where $C$ is independent of $h$ and $\mu$.  This proves the first result of the lemma.
	
	For the $L^2$ estimate, we begin by defining $\hat \xi \in C^2(\Omega)\cap C^0(\overline{\Omega})$ satisfying
	\[
	-\Delta \hat \xi = e, \quad\mbox{ on } \Omega.
	\]
	Let $\psi$ be any harmonic function { (e.g.~a harmonic polynomial \cite{ABR01})} that satisfies $\psi(x_i)=\hat \xi(x_i)$ for all the measurement points $x_i$.  Then we have for $\xi:=\hat \xi - \psi$ that
	\[
	-\Delta \xi = e, \ I_H \xi=0.
	\]
	Testing the first $\xi$ equation with $e$ yields
	\begin{equation}\label{L5b}
	\| e \|^2 = (\nabla \xi,\nabla e).
	\end{equation}
	From \eqref{L2} with $v_h=I_h \xi$, we have that
	\[
	(\nabla e,\nabla I_h \xi) + \mu (I_H e,I_H I_h \xi)=0,
	\]
	and so combining this with \eqref{L5b} gives
	\[
	\| e \|^2 = (\nabla e,\nabla (\xi - I_h \xi)) - \mu (I_H e,I_H I_h \xi).
	\]
	Since the measurement nodes $\{ x_i \}_{i=1}^M$ are nodes on $\tau_h$ and $\tau_H \subset \tau_h$, $I_H I_h \xi = I_H \xi=0$.  Hence we have the estimate
	\begin{align*}
	\| e\|  & =  (\nabla e,\nabla (\xi - I_h \xi)) \\
	& \le \| \nabla e \| \| \nabla (\xi - I_h \xi ) \| \\
	& \le Ch \| \nabla e \| \| \Delta  \xi  \| \\
	& = Ch \| \nabla e \| \| e\|,
	\end{align*}
	where $C$ is an interpolation constant and independent of $\mu$.  This reduces
	\[
	\| e \| \le Ch \| \nabla e\|,
	\]
	and using the bound \eqref{L4} for $\| \nabla e \|$ proves the second result of the lemma.

\end{proof}

\subsection{CDA Stokes projection}

We now consider the CDA Stokes projection, which is defined as follows:  Given $u\in X$ and $I_H u$,
find $u_h:= P_h^Su\mapsto u_h$ in $V_h=\{v\in X_h: (\nabla\cdot v,q)=0~~\forall q\in Q_h \}$ satisfying
\begin{equation}
	\begin{split}
\nu (\nabla u_h,\nabla v_h) + \mu(I_H u_h,I_H v_h)&=\nu (\nabla u,\nabla v)+\mu(I_Hu,I_Hv_h), \quad \forall v_h\in V_h. \label{L1S}
	\end{split}
\end{equation}

At first, this projection may appear to be very similar to the CDA Poisson projection with $Y_h$ replaced by $V_h$. However, there is a subtle difference regarding the fact that $I_H$ does not necessarily map into the (discretely divergence free space) $V_h$.  Hence, there is a more delicate analysis needed to prove that this projection is optimal in $L^2$ and $H^1$.  

For simplicity, we will consider Scott-Vogelius elements with $X_h = P_k(\tau_h)\cap X$ and $Q_h = P_{k-1} \cap L^2_0(\Omega)$ with $k=2$, but the same idea can be applied for $k>2$ in 2D and for $k\ge 3$ in 3D (for lower order elements than these, the following construction would not work).  The key idea is to alter the nodal interpolation $I_h$ similar to what was done in \cite{X2016} as follows: $I_h^S$ will be a nodal interpolant at all the vertices of $\tau_h$ (and therefore also at all the measurement nodes so that $I_H I_h^S = I_H$), but the value at the midedges is defined so that the total flux through any element boundary is 0.  Hence on each edge $e$, where $\{x_i\}_{i=1}^{3}$ are the nodes on $e$ (i=1 and 3 correspond to vertices, i=2 to the midedge), and $\vec{n}$ is the unit outward normal, $\psi_i(x)$ is the finite element nodal basis that belongs to support point $x_i$, $\vec{e}_j$ is a $d$ dimension unit vector with $j^{th}$ component $1$ and others $0$, ${\psi}_i^j(x)$ is a $d$ dimension vector with $j^{th}$ component as $\psi_i(x)$ and others $0$, we make the following correction to the midedge value:
\begin{align}
	I_h^Su(x_2)& =
		u(x_2) + \frac{\int_eu(s)\cdot \vec{n}ds-\sum_{i,j}\int_e((\vec{e}_j\cdot u(x_i))\psi_i^j(s))\cdot \vec{n}ds}{\sum_{j=1}^d\int_e\psi_2^j(s)\cdot \vec{n}ds}\vec{n}.
\end{align}
With this definition of $I_h^S$, one can calculate that the total flux through each element $E$ is zero, and thus by the divergence theorem that $\int_{E} \nabla \cdot I_h^S u\ dx=0$.  With this, since $\nabla \cdot X_h\subset Q_h$, we get that $\| \nabla \cdot I_h^S u \|_{L^2(E)}=0$ and thus $\| \nabla \cdot I_h^S u \|=0$.  Therefore, it holds that $I_h^S: H^1\rightarrow V_h$.

In addition to being divergence-free, it is proven in \cite{X2016} that $I_h^S$ has the following property.

\begin{lemma}\label{lemma1}
	For $u\in L^{\infty}(\Omega)$, the interpolant $I_h^S$ satisfies
	\begin{align}\label{Ih}
		\max_{x\in e}|u(x)-I_h^Su(x)|\leq Ch^{k+1}.
	\end{align}
\end{lemma}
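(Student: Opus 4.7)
My plan is to compare $I_h^S u$ with the standard degree-$k$ nodal interpolant $I_h u$, exploiting the fact that by construction the two interpolants agree at every vertex of $\tau_h$ and differ only through the modified midedge values. Thus on each edge $e$ with midedge node $x_2$ and associated scalar basis function $\psi_2$, I will write
\[
I_h^S u(x) - I_h u(x) = \delta\, \vec n\, \psi_2(x), \qquad x \in e,
\]
for some scalar $\delta$ (the edge-dependent correction), and then estimate the two pieces $u - I_h u$ and $I_h u - I_h^S u$ separately.

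Next I will identify $\delta$ explicitly. The defining property of $I_h^S$ is that the normal flux of $I_h^S u$ through each element boundary matches that of $u$; rewriting this flux-balance condition on a single edge $e$ and using $\psi_2(x_i)=0$ at the vertex nodes gives
\[
\delta = \frac{\int_e (u - I_h u)\cdot \vec n\, ds}{\int_e \psi_2\, ds}.
\]
Standard $P_k$ nodal interpolation (assuming enough smoothness on $u$, as is implicit in the statement) gives $\max_{x\in e}|u(x) - I_h u(x)| \le Ch^{k+1}$, so the numerator is bounded by $|e|\cdot Ch^{k+1} \le Ch^{k+2}$. A reference-element scaling argument shows that the denominator satisfies $\int_e \psi_2\, ds \ge ch$. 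Combining, $|\delta| \le Ch^{k+1}$.

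Since $|\psi_2(x)| \le 1$ pointwise on $e$, this immediately yields $\max_{x \in e}|I_h^S u(x) - I_h u(x)| \le Ch^{k+1}$. The triangle inequality together with the standard nodal interpolation bound then gives
\[
\max_{x\in e}|u(x) - I_h^S u(x)| \le \max_{x\in e}|u - I_h u| + \max_{x\in e}|I_h u - I_h^S u| \le C h^{k+1},
\]
which is the claim.

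The main technical obstacle is verifying that the denominator in the expression for $\delta$ is non-degenerate and of size $h$; this rests on the observation that the $P_k$ midedge basis function carries a uniformly positive integrated contribution over its associated edge, independent of the edge orientation and normal direction. If the formula for the midedge correction as literally displayed produces a denominator that could vanish for certain normals, I will reinterpret the construction according to its intent, which is a pure flux-balance adjustment in the direction $\vec n$, as formulated in the original reference \cite{X2016}. Once the non-degeneracy of the denominator is in hand, everything else reduces to routine interpolation and scaling estimates.
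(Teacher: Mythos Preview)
Your argument is sound and follows the natural route: split $u - I_h^S u$ into the standard nodal interpolation error $u - I_h u$ plus the midedge correction $\delta\,\vec n\,\psi_2$, then bound $|\delta|$ by estimating the flux deficit $\int_e(u-I_hu)\cdot\vec n\,ds$ in the numerator and the edge integral $\int_e\psi_2\,ds$ in the denominator via a reference-element scaling. The paper itself does not prove this lemma; it simply cites the original construction in \cite{X2016} and quotes the result. Your sketch is essentially what that reference does, and your two caveats---that the stated $L^\infty$ regularity is insufficient and that the denominator as literally displayed in the paper can degenerate for certain normals, so one must read it as the pure normal-flux correction---are both accurate and appropriately flagged.
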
  
Since all nodes of $\tau_h$ are on edges of the mesh, we immediately get that the max error at any node is $Ch^{k+1}$.  This can be used with standard approximation theory to get that
\begin{align}
	\label{NI1}\|u(x)-I_h^S(u)\|&\leq Ch^{k+1} \| u \|_{H^{k+1}(\Omega)},\\
	\label{NI2}	\|\nabla \left(u-I_h^Su\right)\|&\leq Ch^{k} \| u \|_{H^{k+1}(\Omega)}.
\end{align}

With these approximation properties and with $I_h^S$ mapping into $V_h$, we have the following result for the CDA-Stokes projection:
\begin{lemma}\label{cdastokes}
	Given $u\in X\cap H^{k+1}(\Omega)$ and $\mu\ge 0$,  the Stokes projection $P_h^S u$ satisfies
\begin{align}
	\| \nabla (u - P^S_h u) \|& \le Ch^k \| u \|_{H^{k+1}(\Omega)},\\
	\| u - P^S_h u \| &\le Ch^{k+1} \| u \|_{H^{k+1}(\Omega)},
\end{align}
	where $C$ is independent of $u$, $h$, and $\mu$.
\end{lemma}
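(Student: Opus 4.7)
The plan is to adapt the argument for Lemma \ref{cdapoisson}, with the key technical role of the nodal interpolant now played by the divergence-free modification $I_h^S$ constructed above. The essential properties we need are that $I_h^S$ maps $H^1(\Omega)$ into $V_h$ and that it agrees with the identity at every vertex of $\tau_h$; the latter yields $I_H I_h^S u = I_H u$, so the identity $I_H(u - I_h^S u) = 0$ that drove the Poisson argument carries over verbatim.

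For the $H^1$ estimate, set $e = u - P_h^S u$, $\eta = u - I_h^S u$, and $\phi = I_h^S u - P_h^S u$. Because $I_h^S u,\, P_h^S u \in V_h$, the function $\phi \in V_h$ is an admissible test function, and rewriting \eqref{L1S} produces the Galerkin orthogonality
$$\nu(\nabla e, \nabla v_h) + \mu(I_H e, I_H v_h) = 0, \qquad \forall v_h \in V_h.$$
Testing with $v_h = \phi$ and using $I_H \eta = 0$ (so $I_H \phi = I_H e$) gives
$$\nu\|\nabla \phi\|^2 + \mu\|I_H \phi\|^2 = -\nu(\nabla \eta, \nabla \phi),$$
and Young's inequality reduces this to $\|\nabla \phi\|^2 \le \|\nabla \eta\|^2$. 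The triangle inequality together with the approximation estimate \eqref{NI2} then delivers the first bound with a constant independent of $\mu$, because the nudging contribution never appears on the upper-bound side.

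For the $L^2$ estimate I would set up a Stokes duality argument in direct analogy with the Poisson proof. Let $(\hat\xi, \hat\lambda)$ be a Stokes pair with $-\nu\Delta\hat\xi + \nabla\hat\lambda = e$ and $\nabla\cdot\hat\xi = 0$, so that Stokes regularity provides $\|\hat\xi\|_{H^2(\Omega)} \le C\|e\|$. Following the harmonic-polynomial step in the Poisson proof, subtract a Stokes-harmonic pair $(\psi,\pi)$ satisfying $-\nu\Delta\psi + \nabla\pi = 0$, $\nabla\cdot\psi = 0$, and $\psi(x_i) = \hat\xi(x_i)$ at every measurement node $x_i$; the corrected pair $(\xi, \lambda) = (\hat\xi - \psi,\, \hat\lambda - \pi)$ still solves the same dual Stokes system but now has $I_H \xi = 0$. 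Pairing the dual equation with $e$, integrating by parts, and using that $e$ is divergence-free (inherited from $u,\, P_h^S u \in V$) and vanishes on $\partial\Omega$, one obtains $\|e\|^2 = \nu(\nabla \xi, \nabla e)$. Subtracting $I_h^S \xi \in V_h$ and invoking the Galerkin orthogonality with $v_h = I_h^S \xi$, for which the nudging term drops out because $I_H I_h^S \xi = I_H \xi = 0$, leaves
$$\|e\|^2 = \nu(\nabla e, \nabla(\xi - I_h^S \xi)) \le C h \|\nabla e\|\|\xi\|_{H^2(\Omega)} \le C h \|\nabla e\|\|e\|,$$
so $\|e\| \le Ch\|\nabla e\|$, and combining with the already-established $H^1$ bound finishes the proof.

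The hardest step will be the Stokes-harmonic correction: whereas the Poisson case can simply cite the existence of a harmonic polynomial matching the finitely many prescribed nodal values, here we must build a divergence-free vector field solving the homogeneous Stokes system while hitting prescribed data at the measurement nodes, and do so with a stability estimate that preserves the $H^2$ regularity of the corrected $\xi$. This can be arranged, for example, by superposing polynomial Stokes-harmonic vector fields or Oseen-tensor Stokeslets and solving a finite-dimensional linear system at the measurement nodes, but it is the step that requires genuinely more care than in the scalar setting and is what prevents a direct reuse of the Poisson proof.
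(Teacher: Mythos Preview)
Your $H^1$ argument is correct and matches the paper's. For the $L^2$ estimate, however, you take a different and more laborious route than the paper intends. The paper's one-line proof simply transplants the scalar Poisson duality of Lemma~\ref{cdapoisson} componentwise: solve $-\Delta\hat\xi = e$ as a vector Poisson problem, subtract a \emph{componentwise harmonic polynomial} $\psi$ matching $\hat\xi$ at the measurement nodes, and test the Galerkin orthogonality with $v_h = I_h^S\xi$. This works because $I_h^S$ maps \emph{any} sufficiently regular vector field into $V_h$ while preserving vertex values, so the dual function $\xi$ need not itself be divergence-free; the nudging term still drops out via $I_H I_h^S \xi = I_H \xi = 0$ exactly as in the scalar case. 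Your Stokes-dual route is correct in outline, but it forces you to construct a divergence-free Stokes-harmonic field interpolating prescribed nodal data with controlled $H^2$ norm---precisely the step you flag as hardest---and that entire difficulty is avoidable with the paper's simpler componentwise correction. One further small point: your argument invokes $\nabla\cdot e = 0$ to eliminate the dual pressure term, which requires $u\in V$; the lemma as stated only assumes $u\in X\cap H^{k+1}(\Omega)$, so your version tacitly adds a hypothesis (harmless in the NSE application, but not needed in the paper's componentwise approach).
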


\begin{proof}
Follow the proof of Lemma \ref{cdapoisson} with $I_h$ replaced by $I_h^S$ and $Y_h$ replaced by $V_h$.
\end{proof}

	\section{Analysis of large parameter CDA for BDF2-FEM discretization of heat equation}\label{H}
	
We now consider an analysis of CDA with large nudging parameters applied to the BDF2-FEM discretization of the heat equation with form
	\begin{align}\label{heatw}
	\begin{split}
	u_t - \kappa \Delta u & = f,\\
	 u(\cdot,0)&=u_0, \\
	 u|_{\partial\Omega} & = 0,
 \end{split}
\end{align}
where $\kappa > 0$ is the thermal diffusivity constant.
	
Introducing the bilinear form $a(\cdot,\cdot): H^1_0\times H^1_0\rightarrow \mathbb{R}$ defined by $a(\phi,v)=(\kappa\nabla \phi,\nabla v)$, the CDA algorithm we consider herein for the heat equation using a BDF2-FEM discretization is as follows:
\begin{algorithm}\label{alg:heat}
Let $\Delta t$ be an uniform time step and $Y_h$ be the FE space. Given any initials condition $w_h^0\in L^2(\Omega), w_h^1\in Y_h$, source $f\in L^{\infty}(0,\infty; L^2(\Omega))$, $I_H$ satisfying (\ref{interpolationi})-(\ref{interpolationi2}),  observations $\{I_Hu^{n+1}\}_{n\geq 1}\in L^2(\Omega)$, and nudging parameter $\mu>0$, find $\{w^n_h\}\in Y_h$ for $n=1,2,3,\cdots,$ satisfying 
\begin{align}\label{DCDA}
\left(\frac{3w_h^{n+1}-4w_h^{n}+w_h^{n-1}}{2\Delta t}, v_h\right)+ a(w_h^{n+1},v_h)+\mu  \left(I_H(w_h^{n+1}-u^{n+1}), I_Hv_h\right)=(f,v_h),
\end{align}
for all $v_h\in Y_h$.
\end{algorithm}

We now prove that this method has temporally and spatially optimal long-time accuracy with respect to the $L^2-$norm for any $\mu>0$.

\begin{theorem}\label{thm1}
	Let $u\in L^{\infty}(0,\infty;H^{k+1}(\Omega))$ be the solution of the heat equation (\ref{heatw}) where $u_t, u_{tt}, u_{ttt}\in L^{\infty}(0,\infty;L^2(\Omega))$, $u^0=u_0\in L^2(\Omega),\ u^1\in H_{{0}}^1(\Omega)$, and $f\in L^{\infty}(0,\infty;L^2(\Omega))$. Then, for any given $H>0$ and $\mu\geq \frac{1}{2C_I^2H^2}$, the difference between the CDA solution of (\ref{DCDA}) and the true solution satisfies: 
\begin{align}
	\begin{split}
		\|w_h^{n+1}-u^{n+1}\|^2&\leq C\left(\frac{1}{1+\lambda\Delta t}\right)^{n+1}\left(\|[w_h^{1}-u^{1};w_h^{0}-u^{0}]\|^2_G+\frac{\kappa\Delta t}{4}\|\nabla (w_h^{1}-u^{1})\|^2\right)\\&~~~~ +\frac{C}{\lambda}\left(\Delta t^4+h^{2k+2}\right)
	\end{split}
\end{align}
for any $n\geq 1$. Here, $\lambda=\min\{\Delta t^{-1}, \frac{\kappa C_{\ell}^2 }{2C_I^2H^2},\frac{\kappa C_{\ell}^2 }{4C_P^{2}}\}$ and the constant $C$ is independent of $\mu$, $h$, and $\Delta t$.
\end{theorem}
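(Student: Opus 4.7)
The plan is to leverage the CDA Poisson projection $P_h$ from Lemma~\ref{cdapoisson} in a modified G-norm energy argument so that every bound is uniform in $\mu$. I would decompose $w_h^{n+1}-u^{n+1}=\phi^{n+1}-\eta^{n+1}$, with $\phi^{n+1}:=w_h^{n+1}-P_h u^{n+1}\in Y_h$ and $\eta^{n+1}:=u^{n+1}-P_h u^{n+1}$; Lemma~\ref{cdapoisson} then controls $\|\eta^{n+1}\|\le Ch^{k+1}$ with a constant independent of $\mu$. Subtracting the weak form of (\ref{heatw}) at $t^{n+1}$ from (\ref{DCDA}) and invoking the defining identity $\kappa(\nabla\eta^{n+1},\nabla v_h)+\mu(I_H\eta^{n+1},I_H v_h)=0$ of $P_h$ to cancel all $\mu$-weighted $\eta$ contributions yields the error equation
\begin{align*}
\left(\frac{3\phi^{n+1}-4\phi^n+\phi^{n-1}}{2\Delta t},v_h\right)&+\kappa(\nabla\phi^{n+1},\nabla v_h)+\mu(I_H\phi^{n+1},I_H v_h)\\
&=\left(\frac{3\eta^{n+1}-4\eta^n+\eta^{n-1}}{2\Delta t},v_h\right)-(\tau^{n+1},v_h),
\end{align*}
where $\|\tau^{n+1}\|^2\le C\Delta t^4$ is the BDF2 consistency error and $\|3\eta^{n+1}-4\eta^n+\eta^{n-1}\|^2\le C\Delta t^2 h^{2k+2}$ by Taylor expansion together with Lemma~\ref{cdapoisson} applied to $u_t\in L^\infty(H^{k+1})$.

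Testing with $v_h=\phi^{n+1}$, applying the G-norm identity (\ref{BDF2i}), multiplying by $2\Delta t$, and discarding the nonnegative jump term, I would bound the right-hand side by Cauchy--Schwarz followed by Poincar\'e ($\|\phi^{n+1}\|\le C_P\|\nabla\phi^{n+1}\|$) and Young's inequality with weight $\epsilon=\Delta t\kappa/2$, absorbing $\Delta t\kappa\|\nabla\phi^{n+1}\|^2$ into the left. Introducing the working energy $A_n:=\|[\phi^{n+1};\phi^n]\|_G^2+\tfrac{\kappa\Delta t}{4}\|\nabla\phi^{n+1}\|^2$---whose form matches the initial data appearing in the theorem---and splitting off $\tfrac{\kappa\Delta t}{4}\|\nabla\phi^n\|^2$ via the telescoping that creates $A_{n-1}$, I would arrive at
\[
A_n-A_{n-1}+\Delta t\left(\tfrac{3\kappa}{4}\|\nabla\phi^{n+1}\|^2+2\mu\|I_H\phi^{n+1}\|^2+\tfrac{\kappa}{4}\|\nabla\phi^n\|^2\right)\le C\Delta t(\Delta t^4+h^{2k+2}).
\]

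The main obstacle---and the essential novelty of the approach---is extracting a term $\lambda\Delta t A_n$ from this dissipation with $\lambda$ independent of $\mu$. Using the G-norm equivalence (\ref{G-norm}), I would bound $A_n\le \tfrac{1}{C_\ell^2}(\|\phi^{n+1}\|^2+\|\phi^n\|^2)+\tfrac{\kappa\Delta t}{4}\|\nabla\phi^{n+1}\|^2$ and match piece by piece: (i) $\|\phi^n\|^2$ is controlled by Poincar\'e and absorbed by $\tfrac{\kappa}{4}\|\nabla\phi^n\|^2$, producing the $\frac{\kappa C_\ell^2}{4C_P^2}$ bound on $\lambda$; (ii) $\|\phi^{n+1}\|^2$ is controlled via the CDA interpolation inequality (\ref{interpolationi}), $\|\phi^{n+1}\|^2\le 2\|I_H\phi^{n+1}\|^2+2C_I^2H^2\|\nabla\phi^{n+1}\|^2$, whose $\|\nabla\|^2$ piece is absorbed by $\tfrac{3\kappa}{4}\|\nabla\phi^{n+1}\|^2$ (yielding the $\frac{\kappa C_\ell^2}{2C_I^2H^2}$ bound) and whose $\|I_H\|^2$ piece is absorbed by $2\mu\|I_H\phi^{n+1}\|^2$---this last absorption requires only $\lambda\le C_\ell^2\mu$, and the hypothesis $\mu\ge\frac{1}{2C_I^2H^2}$ guarantees it is compatible with the other two constraints \emph{regardless of how large $\mu$ is}, which is precisely the source of uniformity in $\mu$; (iii) the added $\tfrac{\kappa\Delta t}{4}\|\nabla\phi^{n+1}\|^2$ is soaked up by a residual portion of $\tfrac{3\kappa}{4}\|\nabla\phi^{n+1}\|^2$, producing the $\Delta t^{-1}$ bound. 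The minimum of the three is the stated $\lambda$, and the resulting recursion is $(1+\lambda\Delta t)A_n\le A_{n-1}+C\Delta t(\Delta t^4+h^{2k+2})$.

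To finish, I would apply Lemma~\ref{exp} to obtain $A_n\le A_0(1+\lambda\Delta t)^{-(n+1)}+\frac{C(\Delta t^4+h^{2k+2})}{\lambda}$, use $\|\phi^{n+1}\|^2\le C_u^2\|[\phi^{n+1};\phi^n]\|_G^2\le C_u^2 A_n$ together with the triangle inequality $\|w_h^{n+1}-u^{n+1}\|^2\le 2\|\phi^{n+1}\|^2+2\|\eta^{n+1}\|^2$, and bound $A_0$ in terms of the initial data $\|[e^1;e^0]\|_G^2+\frac{\kappa\Delta t}{4}\|\nabla e^1\|^2$ stated in the theorem, with the additional $O(h^{2k+2})$ projection contributions absorbed into the $\frac{C}{\lambda}(\Delta t^4+h^{2k+2})$ term.
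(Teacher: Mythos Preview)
Your proposal is correct and follows essentially the same approach as the paper's proof: decompose the error using the CDA Poisson projection $P_h$ so that the $\mu$-weighted $\eta$ terms vanish via the projection identity, test with $\phi^{n+1}$, apply the BDF2 G-norm identity, telescope with the $\tfrac{\kappa\Delta t}{4}\|\nabla\phi^n\|^2$ term, and extract $\lambda\Delta t A_n$ from the dissipation using Poincar\'e, the interpolation bound \eqref{interpolationi}, and G-norm equivalence before invoking Lemma~\ref{exp}. The only differences are cosmetic (a sign convention on $\phi$, and that the paper first converts $\kappa\|\nabla\phi^{n+1}\|^2+2\mu\|I_H\phi^{n+1}\|^2$ into a $\|\phi^{n+1}\|^2$ lower bound before the $\lambda$-matching, whereas you carry out the matching directly).
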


\begin{remark}
	In (\ref{thm1}), the nudging parameter $\mu$ does not appear on any right side terms, which implies that there will be no adverse effect by considering arbitrarily large $\mu$ in the CDA algorithm. 
\end{remark}

\begin{proof}
	Using Taylor's theorem, the solution of the heat equation (\ref{heatw}) satisfies
	\begin{align}\label{DCH}
		\left(\frac{3u^{n+1}-4u^{n}+u^{n-1}}{2\Delta t}, v_h\right)+a(u^{n+1},v_h)&=(f,v_h)+\frac{\Delta t^2}{3}(u_{ttt}(t^*),v_h),~~\forall v_h\in Y_h,
	\end{align}
where $t^*\in[t^{n-1},t^{n+1}]$.	
	Subtracting equation (\ref{DCDA}) from (\ref{DCH}) gives the error equation
\begin{align}\label{ee}
	\begin{split}
	\left(\frac{3e^{n+1}-4e^{n}+e^{n-1}}{2\Delta t}, v_h\right)	&+a(e^{n+1},v_h)+\mu \left(I_He^{n+1}, I_Hv_h\right)\\&=\frac{\Delta t^2}{3}(u_{ttt}(t^*),v_h),~~\forall v_h\in Y_h,	
	\end{split}
\end{align}
where $e^n=u^n-w_h^n$.
We decompose $e^n$ as: $e^n=\left(u^n-P_hu^n\right)+\left(P_hu^n-w_h^n\right)=\eta^n+\phi^n$. Then with the projection equation (\ref{NPP}), we have that $a(\eta,v_h)+\mu(I_H \eta,I_H v_h)=0$, so we can rewrite (\ref{ee}) as 
\begin{align}\label{ed}
\begin{split}
	&\left(\frac{3\phi^{n+1}-4\phi^{n}+\phi^{n-1}}{2\Delta t}, v_h\right)	+a(\phi^{n+1},v_h)+\mu \left(I_H\phi^{n+1}, I_Hv_h\right)\\&=-\left(\frac{3\eta^{n+1}-4\eta^{n}+\eta^{n-1}}{2\Delta t}, v_h\right)+\frac{\Delta t^2}{3}(u_{ttt}(t^*),v_h).
\end{split}
\end{align}
Taking $v_h=\phi^{n+1}$ in equation (\ref{ed}), using the BDF2 identity (\ref{BDF2i}), and multiplying by $2\Delta t$, we have 
\begin{align}\label{ed1}
	\begin{split}
&	\|[\phi^{n+1};\phi^{n}]\|^2_G-\|[\phi^{n};\phi^{n-1}]\|^2_G+\frac{1}{2}\|\phi^{n+1}-2\phi^{n}+\phi^{n-1}\|^2+2\kappa\Delta t\|\nabla \phi^{n+1}\|^2\\&+2\mu\Delta t  \|I_H\phi^{n+1}\|^2
		=-\left(3\eta^{n+1}-4\eta^{n}+\eta^{n-1}, \phi^{n+1}\right)+\frac{2\Delta t^3}{3}(u_{ttt}(t^*),\phi^{n+1}).
	\end{split}
\end{align}
Following the analysis from \cite{RZ21} for the time derivative term and using the Cauchy-Schwarz and Young's inequalities on the right-hand side terms of (\ref{ed1}), we obtain
\begin{align}\label{ed2}
	\begin{split}
	-\left(3\eta^{n+1}-4\eta^{n}+\eta^{n-1}, \phi^{n+1}\right)&\leq C\kappa^{-1}\Delta t\left(\|\eta_t\|^2_{L^{\infty(0,\infty;L^2)}}+\int_{t^{n-1}}^{t^{n+1}}\|\eta_{tt}\|^2dt\right)+\frac{\kappa\Delta t}{4}\|\nabla \phi^{n+1}\|^2,\\
	\frac{2\Delta t^3}{3}(u_{ttt}(t^*),\phi^{n+1})&\leq  C\kappa^{-1}\Delta t^5\|u_{ttt}(t^*)\|^2    +\frac{\kappa\Delta t}{4}\|\nabla \phi^{n+1}\|^2.
	\end{split}
\end{align}
Using the interpolation property (\ref{interpolationi}), we { provide the lower bound of} the left-hand side terms of equation (\ref{ed1}) by 
\begin{equation}\label{ed3}
	\begin{split}
		2 \kappa\Delta t\|\nabla \phi^{n+1}\|^2+&2\mu\Delta t\|I_H\phi^{n+1}\|^2  =\kappa\Delta t\|\nabla \phi^{n+1}\|^2+2\mu\Delta t\|I_H\phi^{n+1}\|^2+\kappa\Delta t\|\nabla \phi^{n+1}\|^2\\
		&\geq\frac{\kappa\Delta t}{C_I^2H^2}\|\phi^{n+1}-I_H\phi^{n+1}\|^2+2\mu\Delta t\|I_H\phi^{n+1}\|^2+\kappa\Delta t\|\nabla \phi^{n+1}\|^2\\
		&\geq \beta \left(\|\phi^{n+1}-I_H\phi^{n+1}\|^2+\|I_H\phi^{n+1}\|^2\right)+\kappa\Delta t\|\nabla \phi^{n+1}\|^2\\
		&\geq \frac{\beta}{2} \|\phi^{n+1}\|^2+\kappa\Delta t\|\nabla \phi^{n+1}\|^2,
	\end{split}
\end{equation}
where $\beta=\min\{2\mu\Delta t,\frac{\kappa\Delta t}{C_I^2H^2}\}= \frac{\kappa\Delta t}{C_I^2H^2}$ by allowing $\mu$ to be sufficient large, i.e., $\mu\geq\frac{\kappa }{2C_I^2H^2}$. \\
Combining (\ref{ed1}), (\ref{ed2}), and (\ref{ed3}) results in
\begin{align}\label{ed4}
	\begin{split}
		\|[\phi^{n+1};\phi^{n}]\|^2_G&+\frac{\kappa\Delta t}{4}\|\nabla \phi^{n+1}\|^2+\frac{\kappa\Delta t}{4}\|\nabla \phi^{n+1}\|^2+\frac{\kappa\Delta t}{2C_I^2H^2}\|\phi^{n+1}\|^2+\frac{\kappa\Delta t}{4}\|\nabla \phi^{n}\|^2\\&\leq 	\|[\phi^{n};\phi^{n-1}]\|^2_G+\frac{\kappa\Delta t}{4}\|\nabla \phi^{n}\|^2
	 + C\kappa^{-1}\Delta t^5\|u_{ttt}(t^*)\|^2 \\&+C\kappa^{-1}\Delta t\left(\|\eta_t\|^2_{L^{\infty}(0,\infty;L^2)}+\int_{t^{n-1}}^{t^{n+1}}\|\eta_{tt}\|^2dt\right),
	\end{split}
\end{align}
where we added term $\frac{\kappa\Delta t}{4}\|\nabla \phi^{n}\|^2$ on both sides of (\ref{ed4}). 

After using the Poincar\'{e} inequality and norm equivalence (\ref{G-norm}), we obtain
\begin{align}\label{ed44}
	\begin{split}
		&\|[\phi^{n+1};\phi^{n}]\|^2_G+\frac{\kappa\Delta t}{4}\|\nabla \phi^{n+1}\|^2+\lambda\Delta t\left(\|[\phi^{n+1};\phi^{n}]\|^2_G+\frac{\kappa\Delta t}{4}\|\nabla \phi^{n+1}\|^2\right)\leq 
		\\	
		&\quad\|[\phi^{n};\phi^{n-1}]\|^2_G +\frac{\kappa\Delta t}{4}\|\nabla \phi^{n}\|^2+ C\kappa^{-1}\Delta t^5\|u_{ttt}(t^*)\|^2 
		\\
		&\qquad+C\kappa^{-1}\Delta t\left(\|\eta_t\|^2_{L^{\infty}(0,\infty;L^2)}+\int_{t^{n-1}}^{t^{n+1}}\|\eta_{tt}\|^2dt\right).
	\end{split}
\end{align}
Here, $\lambda=\min\{\Delta t^{-1}, \frac{\kappa C_{\ell}^2 }{2C_I^2H^2},\frac{\kappa C_{\ell}^2 }{4C_P^{2}}\}$.
Using Lemma \ref{exp} and the estimate on $\eta$, we have
\begin{align*}
			\|[\phi^{n+1};\phi^{n}]\|^2_G+\frac{\kappa\Delta t}{4}\|\nabla \phi^{n+1}\|^2&\leq \left(\frac{1}{1+\lambda\Delta t}\right)^{n+1}\left(\|[\phi^{1};\phi^{0}]\|^2_G+\frac{\kappa\Delta t}{4}\|\nabla \phi^1\|^2\right) 
			\\
			&\quad+\frac{C}{\lambda}\left(\Delta t^4+h^{2k+2}\right).
\end{align*}
Finally, the proof of Theorem \ref{thm1} is completed by applying the triangle inequality and another application of the norm equivalence property (\ref{G-norm}). 
\end{proof}

\section{Discrete CDA analysis for Navier-Stokes equations}\label{N}

We consider a BDF2-FEM algorithm with CDA for the NSE, and assume divergence-free elements $(X_h,Q_h)$ that also satisfy the discrete inf-sup condition.  We assume $u$ solves the Navier-Stokes equations:
\begin{align}\label{nnsew}
	\begin{split}
u_t + u\cdot\nabla u + \nabla p - \nu\Delta u & = f, \\
\nabla \cdot u &=0,\\
u(\cdot,0)&= u_0,\\
u|_{\partial\Omega} & = 0.
\end{split}
\end{align}
We define the following bilinear and trilinear forms, respectively:
\begin{align}
a^N(\phi,v)=( \nu\nabla \phi,\nabla v),\quad \forall \phi,v\in X,\\
b(\phi,w,v)=(  \phi\cdot\nabla w, v),\quad \forall \phi,w,v\in X.
\end{align}
By construction, $b(\phi,v,v)=0$ for all $\phi,v\in X$ with $\| \nabla \cdot \phi\|=0$,  and additionally have the following upper bounds \cite{laytonBook}:
 for any $\phi,w,v\in X$ it holds that for some $C>0$ dependent only on the size of $\Omega$,
\begin{align}
	\label{n22}	b(\phi,w,v)&\leq C \|\nabla \phi\|\|\nabla w\|\|\nabla v\|,\\
	\label{n33}	b(\phi,w,v)& \leq C \left(\|w\|_{L^{\infty}}+\|\nabla w\|_{L^3}\right)\| \phi\|\|\nabla v\|.
\end{align}

\begin{algorithm} \label{alg1}
	Let $\Delta t>0$ be the time step size. Given initials condition $w_h^0\in L^2(\Omega), w_h^1\in X_h$, source $f\in L^{\infty}(0,\infty; L^2(\Omega))$, $I_H$ satisfying (\ref{interpolationi})-(\ref{interpolationi2}),  observations $\{I_Hu^{n+1}\}_{n\geq 1}\in L^2(\Omega)$, and nudging parameter $\mu>0$, find $\{w^n_h\}\in X_h$ for $n=1,2,3,\cdots,$ satisfying 
\begin{align}\label{ND1}
	\begin{split}
		\left(\frac{3w_h^{n+1}-4w_h^{n}+w_h^{n-1}}{2\Delta t}, v_h\right)+a^N(w_h^{n+1},v_h)&+b(2w^{n}_h-w^{n-1}_h,w^{n+1}_h,v_h)\\+\mu  \left(I_H(w_h^{n+1}-u^{n+1}), I_Hv_h\right)&=(f^{n+1},v_h),~~\forall v_h\in X_h.
	\end{split}
\end{align}
\end{algorithm}

 We now prove a long time optimal $L^2$ accuracy result for Algorithm \ref{alg1}.  This improves on the analysis in \cite{RZ21,LLC2019,GN20} by providing optimal accuracy in space and time that does not scale with $\mu$ when $\mu$ gets large.

\begin{theorem}\label{thm2}
		Let $u\in L^{\infty}(0,T;H^{k+1}(\Omega))$ solve the Navier-Stokes equations (\ref{nnsew}), \newline $u_{tt}\in L^{\infty}(0,T;H^1(\Omega))$, $u_{ttt}\in L^{\infty}(0,T;L^2(\Omega))$, $u^0 = u_0\in L^2(\Omega), u^1\in X$, and $f\in L^{\infty}(0,T;L^2(\Omega))$. Assume the time step $\Delta t$ satisfies 
		\begin{align*}
		\frac{1}{2}-C\nu^{-1}\Delta t\left(h^{2k+2}+h^{2k}+\|u^{n+1}\|^2_{L^{\infty}}+\|\nabla u^{n+1}\|^2_{L^3}\right)>0 \quad \text{and} \quad \mu\geq \frac{\nu}{4C_I^2 H^2}.
		\end{align*} If 
		\begin{align}
			\frac{\nu}{4C_I^2 H^2}-C\nu^{-1}\Delta t\left(h^{2k+2}+h^{2k}+\|u^{n+1}\|^2_{L^{\infty}}+\|\nabla u^{n+1}\|^2_{L^3}\right)>0,
		\end{align}
		then the difference between the CDA solution of (\ref{ND1}) and the true Navier-Stokes equations' solution satisfies: for any $n\geq 1$
\begin{align}\label{NS99}
	\begin{split}
		\|w_h^{n+1}-u^{n+1}\|^2&\leq C\left(\frac{1}{1+\lambda\Delta t}\right)^{n+1}\left(\|[w_h^{1}-u^{1};w_h^{0}-u^{0}]\|^2_G+\frac{\kappa\Delta t}{4}\|\nabla (w_h^{1}-u^{1})\|^2\right)\\&~~~~+\frac{C\nu^{-1}\left(h^{2k+2}+h^{4k}+\Delta t^4\right)}{\lambda }.
	\end{split}
\end{align}
Here, $\lambda=\min\left\{\Delta t^{-1},\frac{\nu C_l^2}{4C_P^{2}},\left(\frac{\nu}{4C_I^2 H^2}-C\nu^{-1}\left(h^{2k+2}+h^{2k}+\|u^{n+1}\|^2_{L^{\infty}}+\|\nabla u^{n+1}\|^2_{L^3}\right)\right)C_l^2\right\},$ and note the constants are independent of $\mu$. 
\end{theorem}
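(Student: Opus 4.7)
The plan mirrors the heat-equation proof of Theorem \ref{thm1}, with the Poisson projection $P_h$ replaced by the CDA Stokes projection $P_h^S$ from Lemma \ref{cdastokes}, and with additional care devoted to the nonlinear convection term. Using Taylor expansions, the true solution of \eqref{nnsew} satisfies, for all $v_h \in X_h$,
\begin{align*}
\left(\tfrac{3u^{n+1}-4u^n+u^{n-1}}{2\Delta t}, v_h\right) + a^N(u^{n+1},v_h) + b(2u^n - u^{n-1}, u^{n+1}, v_h) = (f^{n+1},v_h) + \tau(v_h),
\end{align*}
where $\tau(v_h)$ collects the BDF2 truncation $\tfrac{\Delta t^2}{3}(u_{ttt}(t^*),v_h)$ and the extrapolation truncation $b(u^{n+1}-(2u^n-u^{n-1}),u^{n+1},v_h)$, both of size $O(\Delta t^2)$ by another Taylor argument.

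Subtracting \eqref{ND1} and decomposing $e^m := u^m - w_h^m = \eta^m + \phi^m$ with $\eta^m := u^m - P_h^Su^m$ and $\phi^m := P_h^Su^m - w_h^m \in V_h$, Lemma \ref{cdastokes} annihilates $a^N(\eta^{n+1},v_h)+\mu(I_H\eta^{n+1},I_Hv_h)$. The nonlinear difference splits as $b(2u^n-u^{n-1}, e^{n+1}, v_h) + b(2e^n-e^{n-1}, w_h^{n+1}, v_h)$. Testing with $v_h = \phi^{n+1}$ and applying the BDF2 identity \eqref{BDF2i}, all cubic-in-$\phi$ contributions of the form $b(2u^n-u^{n-1},\phi^{n+1},\phi^{n+1})$ and $b(2\phi^n-\phi^{n-1},\phi^{n+1},\phi^{n+1})$ vanish since $u^n, u^{n-1}$ and $2\phi^n-\phi^{n-1}$ are (discretely) divergence-free. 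The surviving nonlinear pieces $b(2u^n-u^{n-1},\eta^{n+1},\phi^{n+1})$, $b(2\eta^n-\eta^{n-1},w_h^{n+1},\phi^{n+1})$, $b(2\phi^n-\phi^{n-1},u^{n+1},\phi^{n+1})$, and $b(2\phi^n-\phi^{n-1},\eta^{n+1},\phi^{n+1})$ are to be bounded using \eqref{n22}--\eqref{n33}, Young's inequality, and Lemma \ref{cdastokes}.

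The key step is the same diffusion/nudging lower bound used in Theorem \ref{thm1}, namely
\begin{align*}
2\nu\Delta t\|\nabla\phi^{n+1}\|^2 + 2\mu\Delta t\|I_H\phi^{n+1}\|^2 \ge \tfrac{\nu\Delta t}{2C_I^2H^2}\|\phi^{n+1}\|^2 + \nu\Delta t\|\nabla\phi^{n+1}\|^2,
\end{align*}
which holds once $\mu \ge \nu/(4C_I^2H^2)$. I would then absorb every $\|\phi^{n+1}\|^2$ piece produced by Young's inequality on the nonlinear terms into the $\tfrac{\nu\Delta t}{2C_I^2H^2}\|\phi^{n+1}\|^2$ term above; this absorption is precisely what forces the second time-step hypothesis, while the first hypothesis arises from requiring that the $\|[\phi^{n+1};\phi^n]\|_G^2$ coefficient remain positive after absorbing cross terms involving $\|\phi^n\|^2,\|\phi^{n-1}\|^2$. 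Telescoping produces the geometric recursion
\begin{align*}
(1+\lambda\Delta t)\!\left(\|[\phi^{n+1};\phi^n]\|_G^2 + \tfrac{\nu\Delta t}{4}\|\nabla\phi^{n+1}\|^2\right) \le \|[\phi^n;\phi^{n-1}]\|_G^2 + \tfrac{\nu\Delta t}{4}\|\nabla\phi^n\|^2 + R^{n+1},
\end{align*}
with $R^{n+1} = O(\Delta t^5 + \Delta t h^{2k+2} + \Delta t h^{4k})$ coming from $\tau$ and from $P_h^S$ approximation of $u,u_t,u_{tt}$.

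Lemma \ref{exp} then yields the exponential decay, and the triangle inequality together with the $\mu$-independent bound on $\|\eta^{n+1}\|$ from Lemma \ref{cdastokes} produces \eqref{NS99}. The main obstacle will be handling $b(2\phi^n-\phi^{n-1},u^{n+1},\phi^{n+1})$: estimate \eqref{n33} with $u^{n+1}$ in the middle slot places $\|\phi^n\|,\|\phi^{n-1}\|$ in $L^2$ and scales by $\|u^{n+1}\|_{L^\infty}+\|\nabla u^{n+1}\|_{L^3}$, and the resulting $\|\phi^{n+1}\|^2$ contribution from Young's inequality must be controlled by the $\tfrac{\nu\Delta t}{2C_I^2H^2}\|\phi^{n+1}\|^2$ term rather than by $\mu\Delta t\|I_H\phi^{n+1}\|^2$ in order to preserve $\mu$-independence of the constants in \eqref{NS99}; this is exactly what the second hypothesis in the theorem guarantees.
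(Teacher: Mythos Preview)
Your overall architecture---Taylor-expand the true solution, subtract, split $e=\eta+\phi$ via the CDA Stokes projection so that $a^N(\eta,\cdot)+\mu(I_H\eta,I_H\cdot)$ drops out, test with $\phi^{n+1}$, apply the BDF2 identity, use the nudging/diffusion lower bound, and close with Lemma~\ref{exp}---is exactly the paper's strategy. Two points, however, need correction.

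First, a minor slip: your list of ``surviving'' nonlinear pieces still contains $b(2\eta^n-\eta^{n-1},w_h^{n+1},\phi^{n+1})$. You cannot bound this with \eqref{n22}--\eqref{n33} because no a~priori control on $w_h^{n+1}$ is available; you must write $w_h^{n+1}=u^{n+1}-\eta^{n+1}-\phi^{n+1}$, after which this term splits into $b(2\eta^n-\eta^{n-1},u^{n+1},\phi^{n+1})$, $b(2\eta^n-\eta^{n-1},\eta^{n+1},\phi^{n+1})$, and a vanishing piece (since $\eta$ is divergence-free under Scott--Vogelius). The paper does this expansion up front by writing the nonlinear difference as $-b(2e^n-e^{n-1},e^{n+1},\cdot)+b(2u^n-u^{n-1},e^{n+1},\cdot)+b(2e^n-e^{n-1},u^{n+1},\cdot)$.

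Second, and more substantively, your account of the two $\Delta t$/$H$ hypotheses does not match what your own bounds would produce, and it is not what the paper does. If you estimate $b(2\phi^n-\phi^{n-1},u^{n+1},\phi^{n+1})$ directly via \eqref{n33} and Young's inequality, you obtain $C\nu^{-1}(\|u^{n+1}\|_{L^\infty}^2+\|\nabla u^{n+1}\|_{L^3}^2)\|2\phi^n-\phi^{n-1}\|^2$ plus $\epsilon\nu\|\nabla\phi^{n+1}\|^2$; there is no ``$\|\phi^{n+1}\|^2$ contribution from Young's'' to be absorbed by the nudging term, and the $\|\phi^n\|^2,\|\phi^{n-1}\|^2$ that do appear sit on the \emph{right} side, inflating the coefficient of $\|[\phi^n;\phi^{n-1}]\|_G^2$ rather than threatening the left-side $G$-norm. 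The paper instead uses the key rewrite
\[
2\phi^n-\phi^{n-1}=\phi^{n+1}-\bigl(\phi^{n+1}-2\phi^n+\phi^{n-1}\bigr)
\]
in both $b(2\phi^n-\phi^{n-1},u^{n+1},\phi^{n+1})$ and $b(2\phi^n-\phi^{n-1},\eta^{n+1},\phi^{n+1})$. After \eqref{n33} and Young this produces a $\|\phi^{n+1}\|^2$ term, absorbed by the $\tfrac{\nu}{4C_I^2H^2}\|\phi^{n+1}\|^2$ coming from the nudging lower bound (this is the origin of the second hypothesis and of the corresponding entry in $\lambda$), and a $\|\phi^{n+1}-2\phi^n+\phi^{n-1}\|^2$ term, absorbed by the $\tfrac12\|\phi^{n+1}-2\phi^n+\phi^{n-1}\|^2$ that the BDF2 identity \eqref{BDF2i} supplies (this is exactly the origin of the first hypothesis). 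Without this add--subtract trick your argument would still close, but with different conditions and a different $\lambda$ than those stated in Theorem~\ref{thm2}.
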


\begin{remark}
Note that for $\Delta t$ and $H$ sufficiently small, $\lambda=\frac{\nu C_l^2}{4C_P^{2}}$.  Then for $n$ large enough, the $L^2$ accuracy is $\mathcal{O}(h^{k+1} + \Delta t^2)$.
\end{remark}

\begin{proof}
	Using Taylor's theorem, it can be shown that the true solution of the Navier-Stokes equations (\ref{nnsew}) satisfies 
	\begin{align}\label{N1}
		\begin{split}
		&\left(\frac{3u^{n+1}-4u^{n}+u^{n-1}}{2\Delta t}, v\right)	+a^N(u^{n+1},v)+b(2u^{n}-u^{n-1},u^{n+1},v)\\&=(f^{n+1},v)+\frac{\Delta t^2}{3}(u_{ttt}(t^*),v)+\Delta t^2 b(u_{tt}(t^{**}),u^{n+1},v),~~\forall v\in  X,
		\end{split}
	\end{align}
	where $t^*,t^{**}\in[t^{n-1},t^{n+1}]$.	Subtracting equation (\ref{ND1}) from (\ref{N1}) gives an error equation:
\begin{align}\label{Ne}
	\begin{split}
	&\left(\frac{3e^{n+1}-4e^{n}+e^{n-1}}{2\Delta t}, v\right)	+a^N(e^{n+1},v_h)-b(2e^{n}-e^{n-1},e^{n+1},v_h)
	\\
	&\quad+b(2u^{n}-u^{n-1},e^{n+1},v_h)+b(2e^{n}-e^{n-1},u^{n+1},v_h)+\mu \left(I_He^{n+1}, I_Hv_h\right)
	\\
	&=\frac{\Delta t^2}{3}(u_{ttt}(t^*),v_h)+\Delta t^2 b(u_{tt}(t^{**}),u^{n+1},v_h),~~\forall v_h\in X_h,
\end{split}
\end{align}
where $e^n=u^n-w^n_h$. We then decompose $e^n$ as 
\[e^n=\left(u^n-P_h^Su^n\right)+\left(P_h^Su^n-w_h^n\right)=\eta^n+\phi^n\]
and, with the help of the projection equation (\ref{L1S}), we rewrite (\ref{Ne}) as:
\begin{align}\label{NS2}
	\begin{split}
			&\left(\frac{3\phi^{n+1}-4\phi^{n}+\phi^{n-1}}{2\Delta t}, v\right)+\left(\frac{3\eta^{n+1}-4\eta^{n}+\eta^{n-1}}{2\Delta t}, v\right)	+a^N(\phi^{n+1},v_h)
			\\
			&\quad-b(2\phi^{n}-\phi^{n-1},\phi^{n+1},v_h)-b(2\phi^{n}-\phi^{n-1},\eta^{n+1},v_h)-b(2\eta^{n}-\eta^{n-1},\phi^{n+1},v_h)
			\\
			&\quad-b(2\eta^{n}-\eta^{n-1},\eta^{n+1},v_h)+b(2u^{n}-u^{n-1},\phi^{n+1},v_h)+b(2u^{n}-u^{n-1},\eta^{n+1},v_h)
			\\
			&\quad+b(2\phi^{n}-\phi^{n-1},u^{n+1},v_h)+b(2\eta^{n}-\eta^{n-1},u^{n+1},v_h)+\mu \left(I_H
			\phi^{n+1}, I_Hv_h\right)
			\\
			&=\frac{\Delta t^2}{3}(u_{ttt}(t^*),v_h)+\Delta t^2 b(u_{tt}(t^{**}),u^{n+1},v_h),~~\forall v_h\in X_h.
	\end{split}
\end{align}
Taking $v_h=\phi^{n+1}$ and using the identity $b(\cdot,v,v)=0,~ \forall v\in X$, 
we have
\begin{align}\label{NS3}
	\begin{split}
		&\frac{	1}{2\Delta t}(\|[\phi^{n+1};\phi^n]\|^2_G-\|[\phi^{n};\phi^{n-1}]\|^2_G+\frac{1}{2}\|\phi^{n+1}-2\phi^{n}+\phi^{n-1}\|^2)	+\nu\|\nabla \phi^{n+1}\|^2\\&+\mu \|I_H\phi^{n+1}\|^2
		=-\left(\frac{3\eta^{n+1}-4\eta^{n}+\eta^{n-1}}{2\Delta t}, \phi^{n+1}\right)+\Delta t^2b(u_{tt}(t^{**}),u^{n+1},\phi^{n+1})\\&
		+b(2\phi^{n}-\phi^{n-1},\eta^{n+1},\phi^{n+1})+b(2\eta^{n}-\eta^{n-1},\eta^{n+1},\phi^{n+1})-b(2u^{n}-u^{n-1},\eta^{n+1},\phi^{n+1})
		\\
		&\quad-b(2\phi^{n}-\phi^{n-1},u^{n+1},\phi^{n+1})-b(2\eta^{n}-\eta^{n-1},u^{n+1},\phi^{n+1})+\frac{\Delta t^2}{3}(u_{ttt}(t^*),\phi^{n+1}).
	\end{split}
\end{align}
Applying standard inequalities such as Young's and the Cauchy-Schwarz inequalities, properties (\ref{n22})-(\ref{n33}), we bound the right hand side term of (\ref{NS3}) as follows:
\begin{align}\label{NS4}
	\begin{split}
\frac{\Delta t^2}{3}(u_{ttt}(t^*),\phi^{n+1})&\leq C\nu^{-1}\Delta t^4\|u_{ttt}(t^*)\|^2+\frac{\nu}{20}\|\nabla \phi^{n+1}\|^2
\\
\Delta t^2b(u_{tt}(t^{**}),u^{n+1},\phi^{n+1})&\leq C\nu^{-1}\Delta t^4\|\nabla u^{n+1}\|^2\|\nabla u_{tt}(t^{**})\|^2+\frac{\nu}{20}\|\nabla \phi^{n+1}\|^2
\\
-\left(\frac{3\eta^{n+1}-4\eta^{n}+\eta^{n-1}}{2\Delta t}, \phi^{n+1}\right)&\leq C\nu^{-1}\left(\|\eta_t\|^2_{L^{\infty}(0,\infty;L^2)}+\int_{t^{n-1}}^{t^{n+1}}\|\eta_{tt}\|^2dt\right)+\frac{\nu}{20}\|\nabla \phi^{n+1}\|^2
\\
b(2\phi^{n}-\phi^{n-1},\eta^{n+1},\phi^{n+1})&=-b(\phi^{n+1}-2\phi^{n}+\phi^{n-1},\eta^{n+1},\phi^{n+1})+b(\phi^{n+1},\eta^{n+1},\phi^{n+1})
\\
&\leq C\nu^{-1}\left(\|\eta^{n+1}\|^2_{L^{\infty}}+\|\nabla\eta^{n+1}\|^2_{L^3}\right)\|\phi^{n+1}-2\phi^{n}+\phi^{n-1}\|^2
\\
&\quad+C\nu^{-1}\left(\|\eta^{n+1}\|^2_{L^{\infty}}+\|\nabla\eta^{n+1}\|^2\right)\|\phi^{n+1}\|^2+\frac{\nu}{10}\|\nabla \phi^{n+1}\|^2
\\
b(2\eta^{n}-\eta^{n-1},\eta^{n+1},\phi^{n+1})&\leq C\nu^{-1}\|\nabla(2\eta^{n}-\eta^{n-1})\|^2\|\nabla\eta^{n+1}\|^2+\frac{\nu}{20}\|\nabla\phi^{n+1}\|^2
\\
-b(2u^{n}-u^{n-1},\eta^{n+1},\phi^{n+1})&= b(2u^{n}-u^{n-1},\phi^{n+1},\eta^{n+1})
\\
&\leq C\nu^{-1}\|2u^{n}-u^{n-1}\|^2\|\eta^{n+1}\|^2_{L^{\infty}}+\frac{\nu}{20}\|\nabla\phi^{n+1}\|^2
\\
-b(2\phi^{n}-\phi^{n-1},u^{n+1},\phi^{n+1})&=b(\phi^{n+1}-2\phi^{n}+\phi^{n-1},u^{n+1},\phi^{n+1})-b(\phi^{n+1},u^{n+1},\phi^{n+1})
\\
&\leq C\nu^{-1}\left(\|u^{n+1}\|^2_{L^{\infty}}+\|\nabla u^{n+1}\|^2_{L^3}\right)\|\phi^{n+1}-2\phi^{n}+\phi^{n-1}\|^2
\\
&\quad+C\nu^{-1}\left(\|u^{n+1}\|^2_{L^{\infty}}+\|\nabla u^{n+1}\|^2\right)\|\phi^{n+1}\|^2+\frac{\nu}{10}\|\nabla \phi^{n+1}\|^2
\\
-b(2\eta^{n}-\eta^{n-1},u^{n+1},\phi^{n+1})&= -b(2\eta^{n}-\eta^{n-1},u^{n+1},\phi^{n+1})
\\
&\leq C\nu^{-1}\left(\|u^{n+1}\|^2_{L^{\infty}}+\|\nabla u^{n+1}\|^2_{L^3}\right)\|\eta^{n}-\eta^{n-1}\|^2
\\
&\quad+\frac{\nu}{20}\|\nabla \phi^{n+1}\|^2.
	\end{split}
\end{align}
We use the interpolant inequality (\ref{interpolationi}) to lower bound $\nu\|\nabla \phi^{n+1}\|^2+\mu \|I_H\phi^{n+1}\|^2$, following the analysis of \cite{GN20}, as follows:
\begin{equation}\label{NS7}
	\begin{split}
		&\nu	\|\nabla \phi^{n+1}\|^2+\mu\|I_H\phi^{n+1}\|^2 \geq\frac{\nu}{4C_I^2H^2}\|\phi^{n+1}-I_H\phi^{n+1}\|^2+\mu\|I_H\phi^{n+1}\|^2+\frac{3\nu}{4}	\|\nabla \phi^{n+1}\|^2\\
		&\geq \gamma \left(\|\phi^{n+1}-I_H\phi^{n+1}\|^2+\|I_H\phi^{n+1}\|^2\right)+\frac{3\nu}{4}	\|\nabla \phi^{n+1}\|^2\geq \frac{\gamma}{2} \|\phi^{n+1}\|^2+\frac{3\nu}{4}	\|\nabla \phi^{n+1}\|^2,
	\end{split}
\end{equation}
where $\gamma=\min\{\mu,\frac{\nu}{4C_I^2H^2}\}= \frac{\nu}{4C_I^2H^2}$ by allowing $\mu$ to be sufficient large, i.e., $\mu\geq\frac{\nu}{4C_I^2H^2}$.

 Combining (\ref{NS3}), (\ref{NS4}), and (\ref{NS7}), multiplying by $2 \Delta t$, and taking into account the assumptions on the regularity of the solution $u$ while utilizing standard approximation properties, leads to 
\begin{align}\label{NS8}
	\begin{split}
		&\|[\phi^{n+1};\phi^{n}]\|^2_G+\frac{\nu\Delta t}{4}	\|\nabla \phi^{n+1}\|^2+\frac{\nu\Delta t}{4}	\|\nabla \phi^{n+1}\|^2+\frac{\nu\Delta t}{4}	\|\nabla \phi^{n}\|^2
		\\
		&\quad+\Delta t\left(\frac{\nu}{4C_I^2 H^2}-C\nu^{-1}\left(h^{2k+2}+h^{2k}+\|u^{n+1}\|^2_{L^{\infty}}+\|\nabla u^{n+1}\|^2_{L^3}\right)\right)\|\phi^{n+1}\|^2
		\\
		&\quad+\left(\frac{1}{2}-C\nu^{-1}\Delta t\left(h^{2k+2}+h^{2k}+\|u^{n+1}\|^2_{L^{\infty}}+\|\nabla u^{n+1}\|^2_{L^3}\right)\right)\|\phi^{n+1}-2\phi^{n}+\phi^{n-1}\|^2
		\\
		&\leq \|[\phi^{n};\phi^{n-1}]\|^2_G+\frac{\nu\Delta t}{4}	\|\nabla \phi^{n}\|^2+C\nu^{-1}\left(\Delta th^{2k+2}+\Delta th^{4k}+\Delta t^5\right).
	\end{split}
\end{align}
Choose $H$ and $\Delta t$ so that $\frac{\nu}{4C_I^2 H^2}-C\nu^{-1}\Delta t\left(h^{2k+2}+h^{2k}+\|u^{n+1}\|^2_{L^{\infty}}+\|\nabla u^{n+1}\|^2_{L^3}\right)>0$ and \newline $\frac{1}{2}-C\nu^{-1}\Delta t\left(h^{2k+2}+h^{2k}+\|u^{n+1}\|^2_{L^{\infty}}+\|\nabla u^{n+1}\|^2_{L^3}\right)>0$ are satisfied.  With these choices, using the Poincar\'{e} inequality, G-norm equivalence (\ref{G-norm}), CDA Stokes projection error estimate, and proceeding similarly as (\ref{ed4})-(\ref{ed44}), leads to
\begin{align}\label{NS9}
	\begin{split}
	\|[\phi^{n+1};\phi^{n}]\|^2_G+\frac{\nu\Delta t}{4}	\|\nabla \phi^{n+1}\|^2&\leq \left(\frac{1}{1+\lambda \Delta t}\right)^{n+1}\left(\|[\phi^{1};\phi^{0}]\|^2_{G}+\frac{\nu\Delta t}{4}	\|\nabla \phi^{1}\|^2\right)\\&~~~~+\frac{C\nu^{-1}\left(h^{2k+2}+h^{4k}+\Delta t^4\right)}{\lambda },
		\end{split}
\end{align}
where $\lambda=\min\left\{\Delta t^{-1},\frac{\nu C_l^2}{4C_P^{2}},\left(\frac{\nu}{4C_I^2 H^2}-C\nu^{-1}\left(h^{2k+2}+h^{2k}+\|u^{n+1}\|^2_{L^{\infty}}+\|\nabla u^{n+1}\|^2_{L^3}\right)\right)C_l^2\right\}$. \newline Then, the proof is completed by applying the triangle inequality.
\end{proof}

\section{Numerical Experiments} \label{sec:num-exp}

We now illustrate the effectiveness of the proposed method for several test problems.  For all of our tests, we take the $L^2$ projection onto piecewise constants as $I_H$, implemented with algebraic nudging \cite{RZ21}.

\subsection{The heat equation}


For our first test, we pick a problem with domain $\Omega=(0,1)^2$, { $\kappa=1$}, and analytical solution
\[
u = \sin( t + 2\pi x + \pi y).  
\]
The solution then determines { $f=u_t-\Delta u$}, and the boundary conditions are taken to be Dirichlet on all of $\partial \Omega$ and equal to the solution.

We compute with the BDF2-FEM CDA algorithm above using $H = \frac19$ and $\mu=\infty$ (direct implementation) with $P_2$ elements on barycenter refinements of uniform triangular meshes.  We note that we also tested with $\mu=10^5,\ 10^8$ and saw no difference in results between these solutions and those found using direct implementation.

The theory above from Theorem \ref{thm1} shows that for $\mu$ sufficiently large and for $n$ large enough, 
\[
\| w_h^n - u(t^n) \| \le C(h^{k+1} + \Delta t^2).
\]
We illustrate this theory by first taking small $T=0.3$ and $\Delta t=0.001$, and varying $h$.  Errors and rates at the final time are shown in Table \ref{conv1}, and we observe third order convergence in space, which is consistent with the theory since we use $P_2$ elements.  To illustrate temporal convergence rates, we take $h=\frac{1}{256}$, $T=1$, and vary $\Delta t$.  Errors and rates at the final time are shown in Table \ref{conv2}, and we observe rates consistent with second order temporal convergence { (the last error and rate are reduced due to temporal error no longer being completely dominant over spatial error with the smallest choice of $\Delta t$).}  Plots of the error measured in the $L^2-$norm for all of these tests are shown in Figure \ref{fig1}, and the results are consistent with the theory - exponential convergence in time up to the discretization error, then long-time optimal accuracy, i.e.~$O(h^3 + \Delta t^2)$ since $P_2$ elements are used for this test, with respect to the $L^2-$norm.

\begin{figure}[ht]
\center
\includegraphics[width = .4\textwidth, height=.28\textwidth,viewport=0 0 550 400, clip]{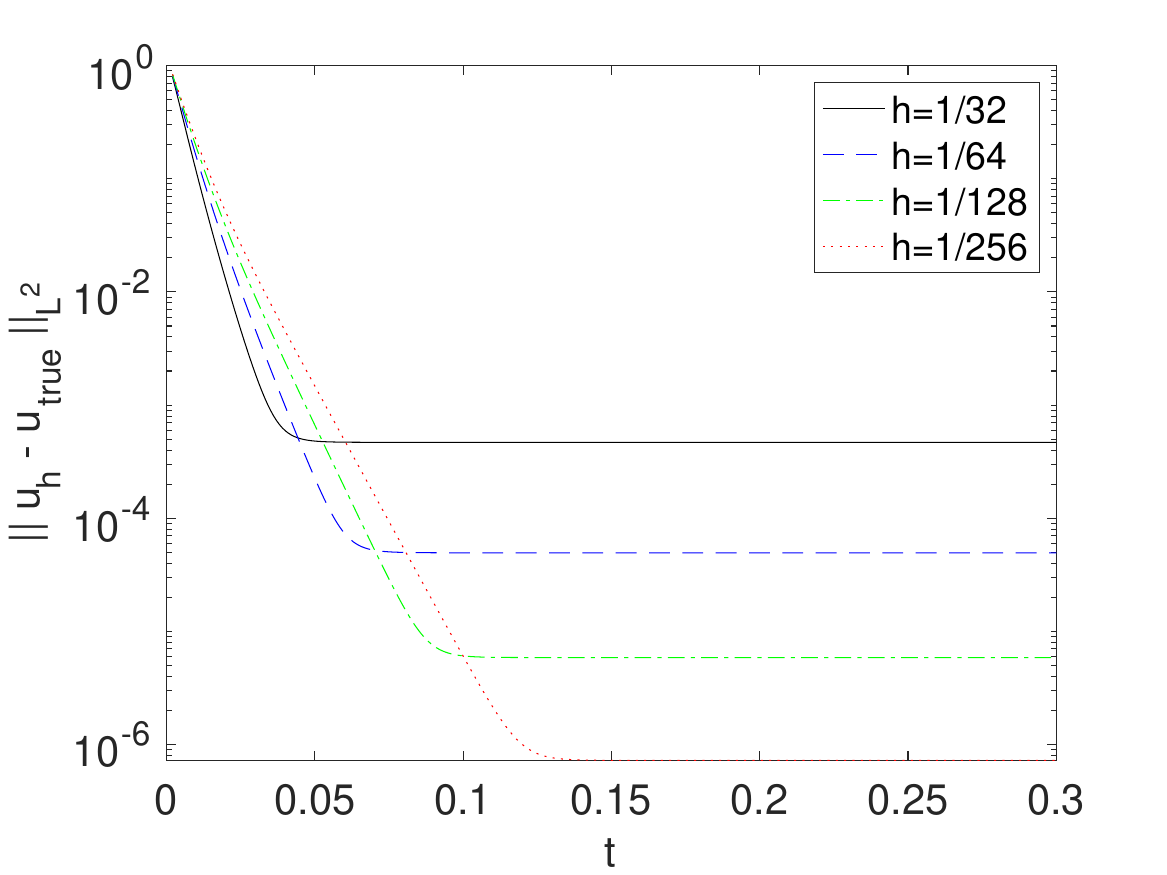}  
\includegraphics[width = .4\textwidth, height=.28\textwidth,viewport=0 0 550 400, clip]{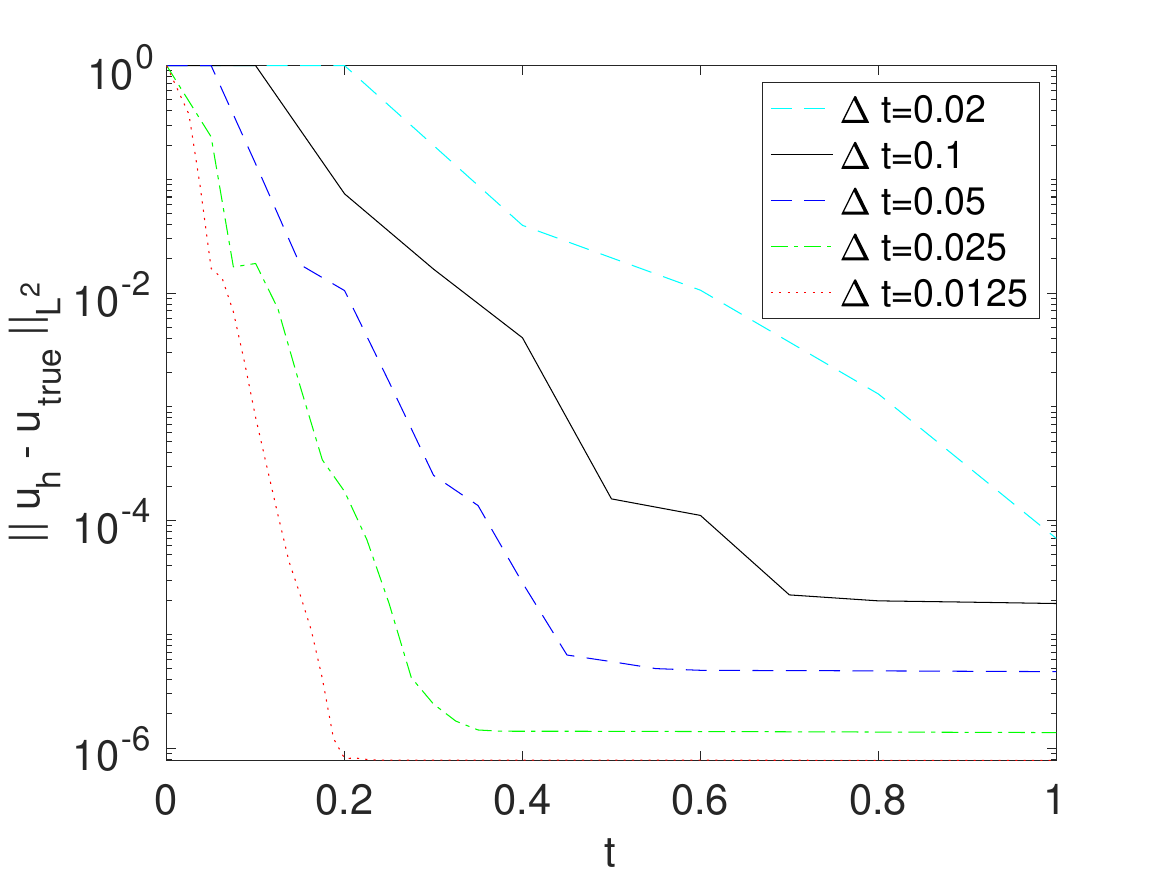}  
\caption{\label{fig1} Shown above are $L^2$ error versus time for the tests using the analytical solution for the heat equation and BDF2-CDA algorithm, for varying $h$ (left) and varying $\Delta t$ (right).}
\end{figure}

\begin{table}[H]
\centering
\begin{tabular}{|c|c|c|}
	\hline
	$h$ &  $\| w_h^M - u(0.3) \|$ & rate \\ \hline
1/32 &    4.690e-4 &  -   \\ \hline
1/64  &   4.947e-5 & 3.25   \\ \hline
1/128 &    5.865e-6 & 3.08 \\ \hline
1/256  & 7.236e-7 & 3.02   \\ \hline
\end{tabular}
\caption{Shown above are errors at the final time and spatial convergence rates for $T=0.3$ and $\Delta t=0.001$ ($M=T/\Delta t$).}\label{conv1}
\end{table}

\begin{table}[H]
\centering
\begin{tabular}{|c|c|c|}
	\hline
	$\Delta t$ &  $\| w_h^M - u(1) \|$ & rate \\ \hline
0.2 &    6.985e-5 &  -   \\ \hline
0.1  &   1.870e-5 & 1.90   \\ \hline
0.05 &    4.712e-6 & 1.99 \\ \hline
0.025  & 1.372e-6 & 1.78   \\ \hline
\end{tabular}
\caption{Shown above are errors and temporal convergence rates for $h=\frac{1}{256}$, $T=1.0$ and varying $\Delta t$ ($M=T/\Delta t$).}\label{conv2}
\end{table}

\subsection{A contaminant transport model}

In this numerical experiment, a CDA-FEM of the heat equation with added linear transport that models contaminant transport in a river is investigated. The linear transport equation takes the form
\begin{eqnarray}
u_t + U \cdot \nabla u - \kappa \Delta u &= &f, \label{ft1} \\
u(0) & = & u_0, \label{ft2}
\end{eqnarray}
where $U$ is a given divergence free velocity field, $\kappa$ is the (material dependent) diffusion coefficient, and here $u$ represents the concentration of a contaminant.  This system is similar to heat equation with an additional linear term $U\cdot\nabla u$, and thus CDA can be applied in the same way as above for the heat equation with (essentially) the same convergence results.

We follow a test problem from \cite{HJK18}, the domain for which is the area enclosed by the curves $y=\sin(x)$, $y=1+\sin(x)$, $x=0$ and $x=4\pi$ as left and right boundaries.  Using the mesh $\tau_h$ shown in Figure
\ref{meshes}, we use $(P_2,P_1)$ Taylor-Hood velocity-pressure elements (which gives 13,928 degrees of freedom (dof)) to compute the Stokes equations with viscosity $0.01$ and $f=0$, using no-slip boundary conditions on the top and bottom boundaries, a plug inflow of $u_{in}=3$, and a zero-traction outflow enforced with the do-nothing condition.  We take our transport velocity $U$ to be the velocity solution of this discrete Stokes problem.

\begin{figure}[!ht]
\begin{center}
\includegraphics[width = .48\textwidth, height=.15\textwidth,viewport=90 29 640 290, clip]{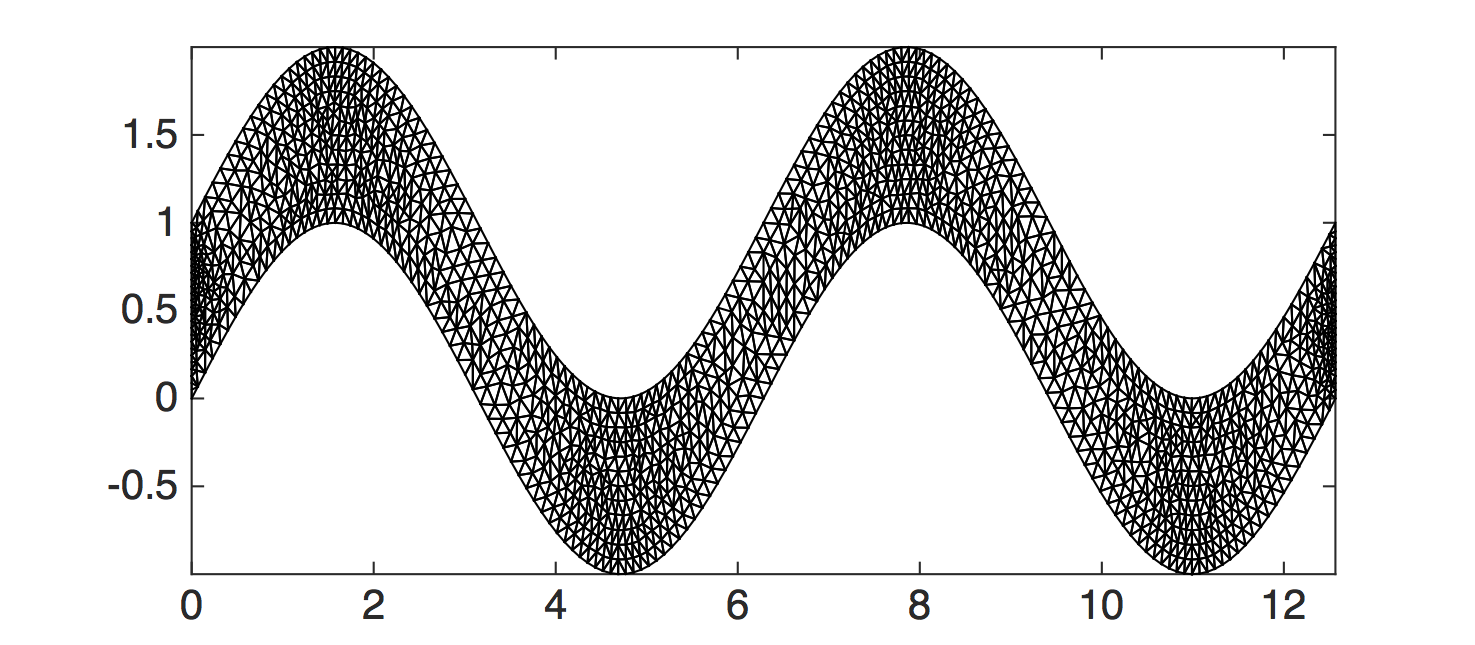}
\includegraphics[width = .48\textwidth, height=.15\textwidth,viewport=100 28 710 240, clip]{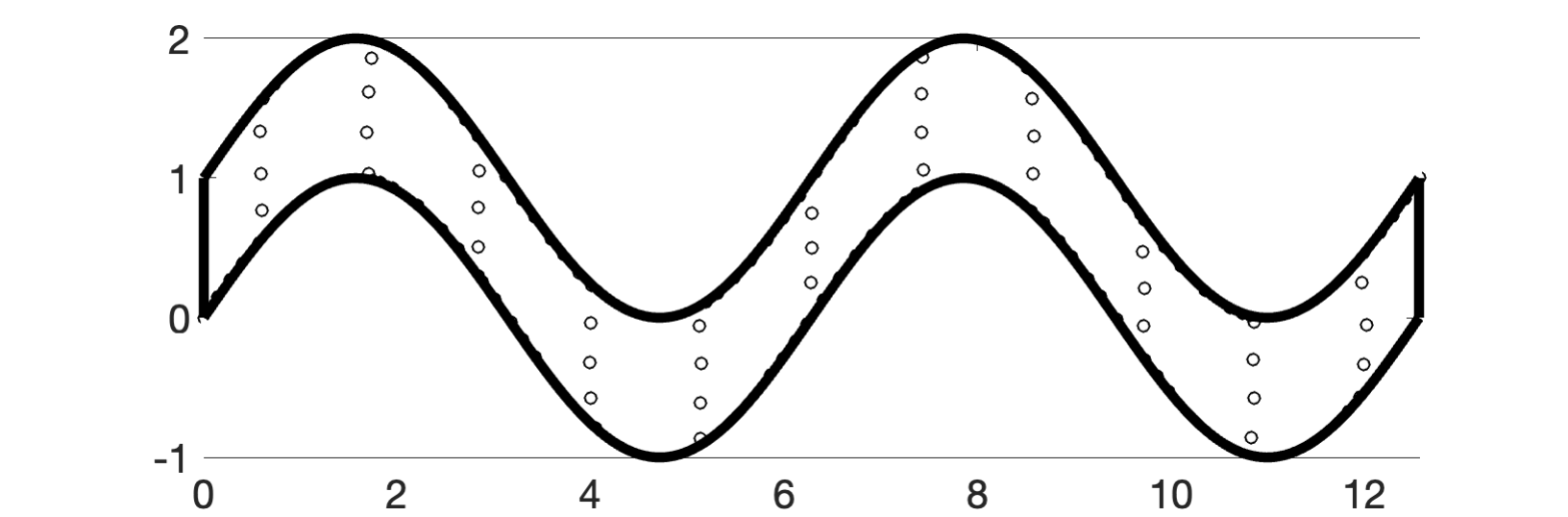}
	\caption{\label{meshes} Shown above are the fine mesh used for the finite element computations (left) and the coarse mesh nodes used for CDA (right).}
	\end{center}
\end{figure}

\begin{figure}[!ht]
\begin{center}
\includegraphics[width = .45\textwidth, height=.28\textwidth,viewport=0 0 710 360, clip]{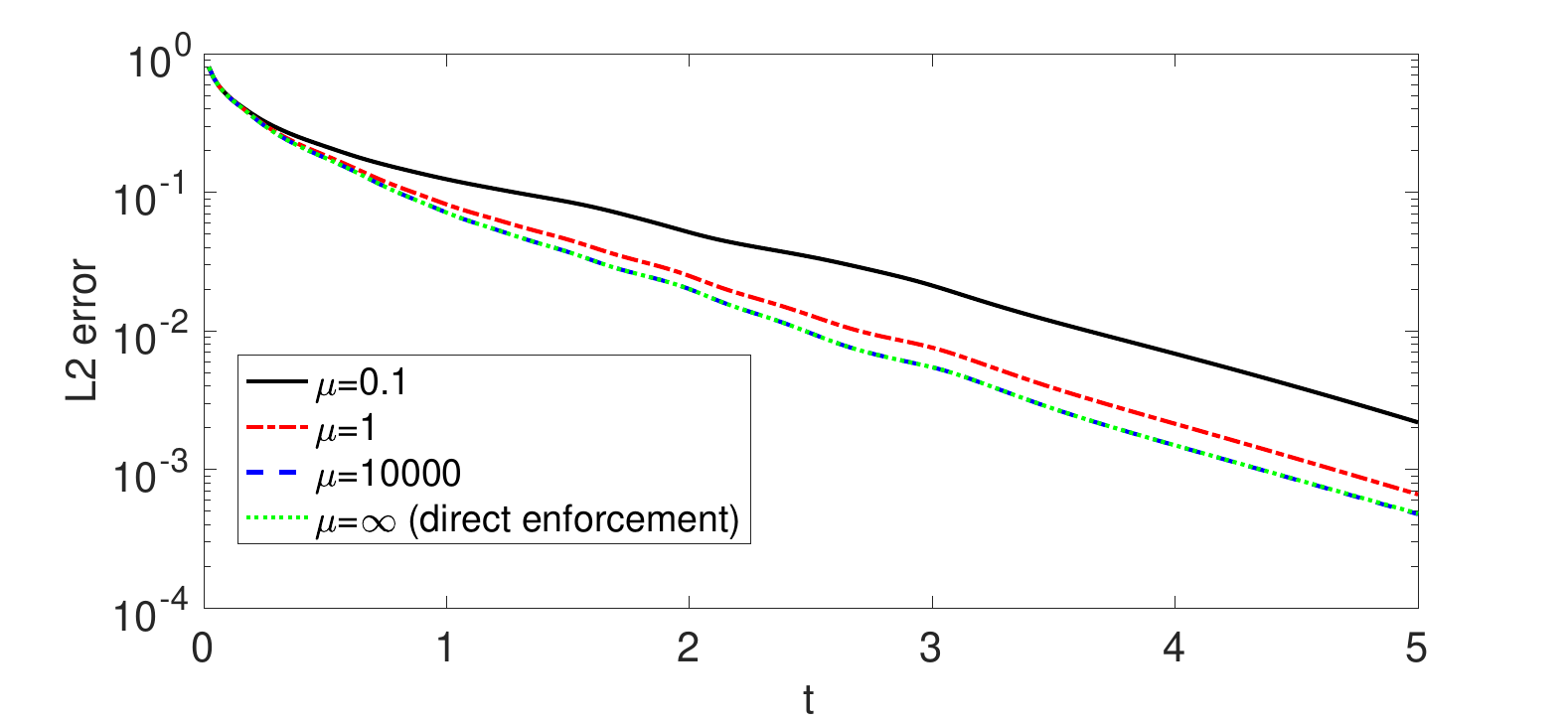}\\
	\caption{\label{conv} Shown above are plots of convergence of the BDF2-CDA solutions to the true solution, for varying $\mu$.}
	\end{center}
\end{figure}

We use BDF2-FEM with CDA to solve \eqref{ft1}-\eqref{ft2} equipped with a boundary condition of 0 at the inflow ($c_{in}=0$), the transport velocity $U$ coming from the discrete Stokes problem, transport term discretization 
\[
b^*(U,w_h^{n+1},v):= b(U,w_h^{n+1},v_h) + \frac12 ((\nabla \cdot U)w_h^{n+1},v_h)
\]
where $b(U,w_h^{n+1},v_h) := (U\cdot\nabla w_h^{n+1},v_h)$, $\kappa=0.01$, and 
zero Neumann conditions  $\nabla c \cdot n =0$ at all other boundaries.  For the initial condition, we take zero contaminant except for two `blobs', which are represented with $c=3$ inside the circles  with radius $0.1$ centered at $(1,1.5)$ and $(5,-0.5)$.  We first compute a direct numerical simulation (DNS) for the concentration $c$ using BDF2-FEM, $P_2$ on the same mesh as is used for the Stokes problem, and time step $\Delta t=0.02$.

\begin{figure}[!ht]
\begin{center}
DA (t=0.0) \hspace{2in} DNS (t=0.0)\\
\includegraphics[width = .48\textwidth, height=.13\textwidth,viewport=90 20 750 290, clip]{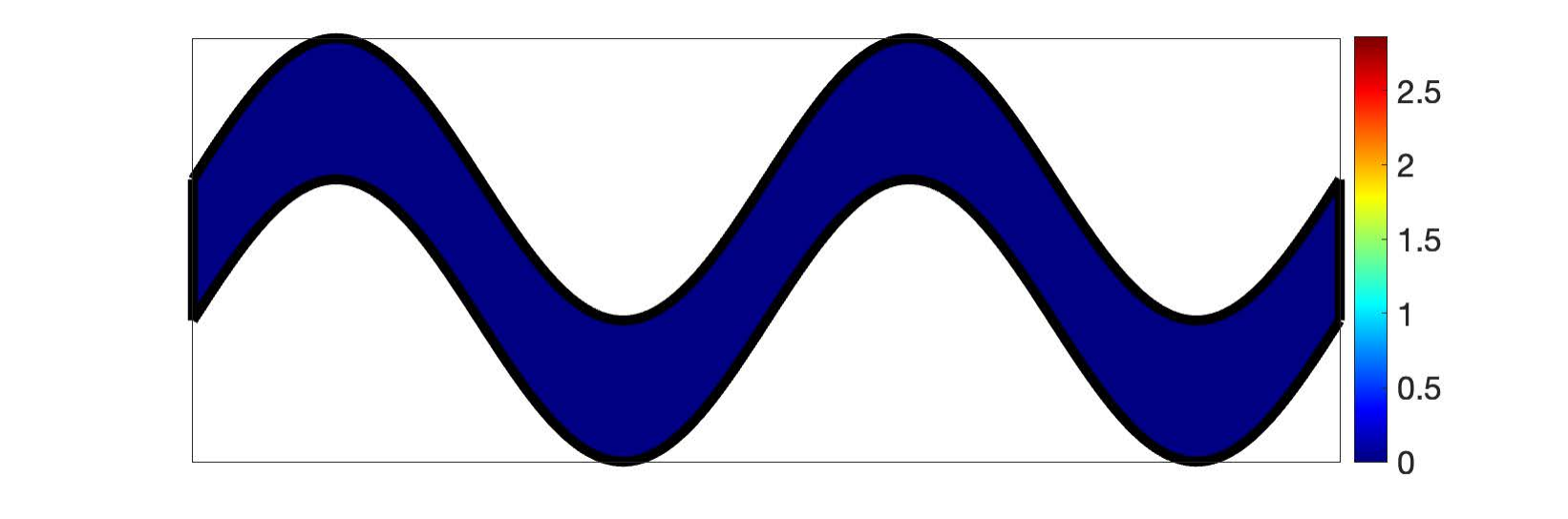}
\includegraphics[width = .48\textwidth, height=.13\textwidth,viewport=90 20 750 290, clip]{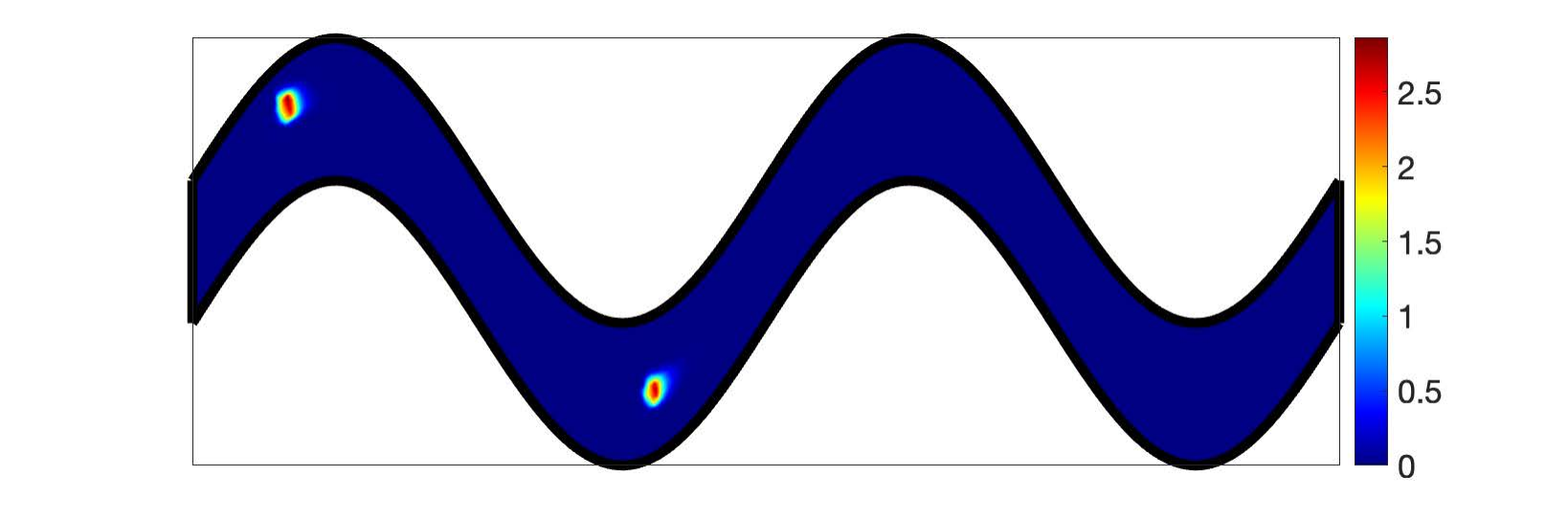}
DA (t=1.0) \hspace{2in} DNS (t=1.0)\\
\includegraphics[width = .48\textwidth, height=.13\textwidth,viewport=90 20 750 290, clip]{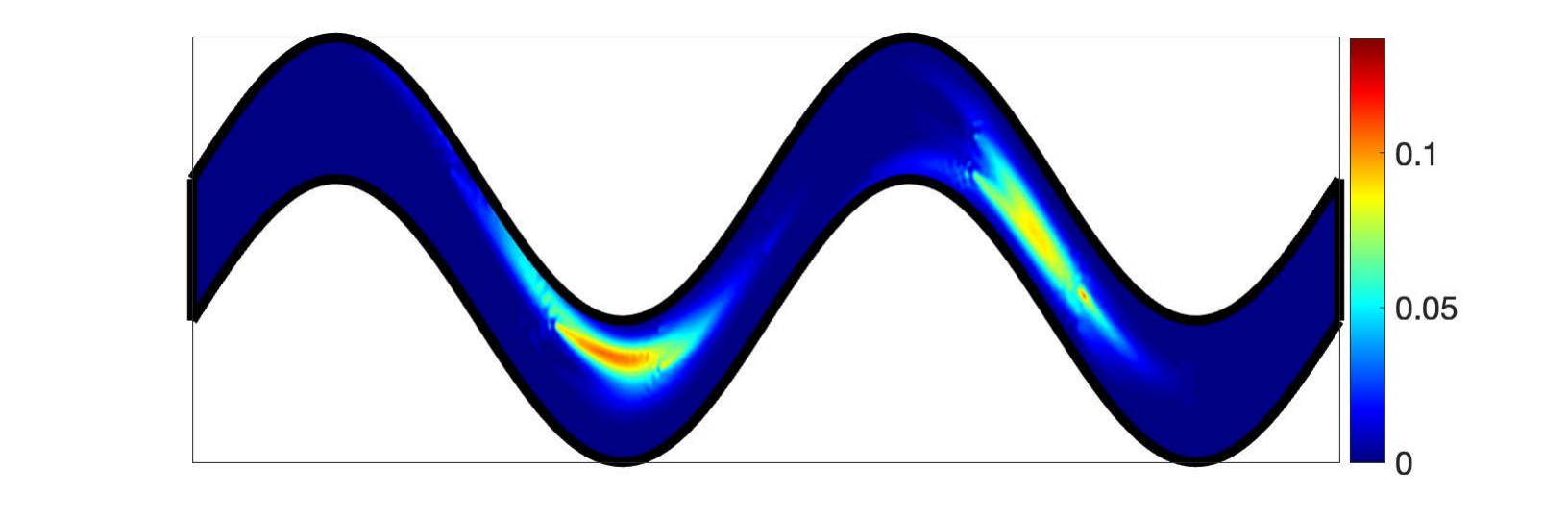}
\includegraphics[width = .48\textwidth, height=.13\textwidth,viewport=90 20 750 290, clip]{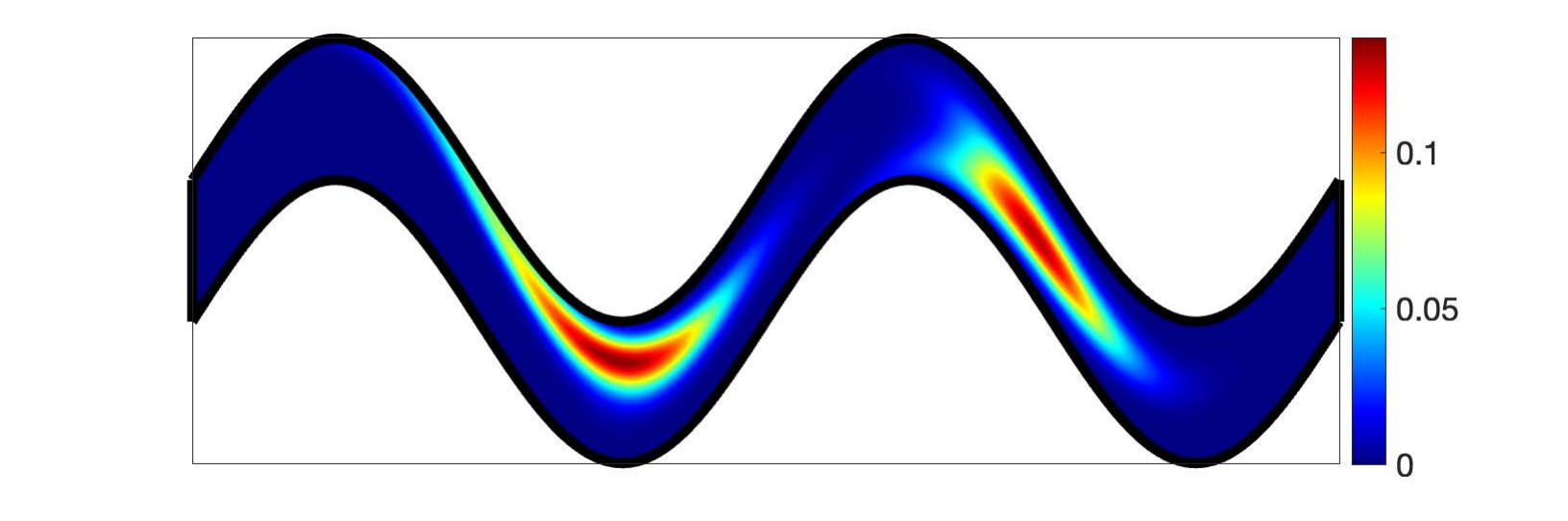}
DA (t=2.0) \hspace{2in} DNS (t=2.0)\\
\includegraphics[width = .48\textwidth, height=.13\textwidth,viewport=90 20 750 290, clip]{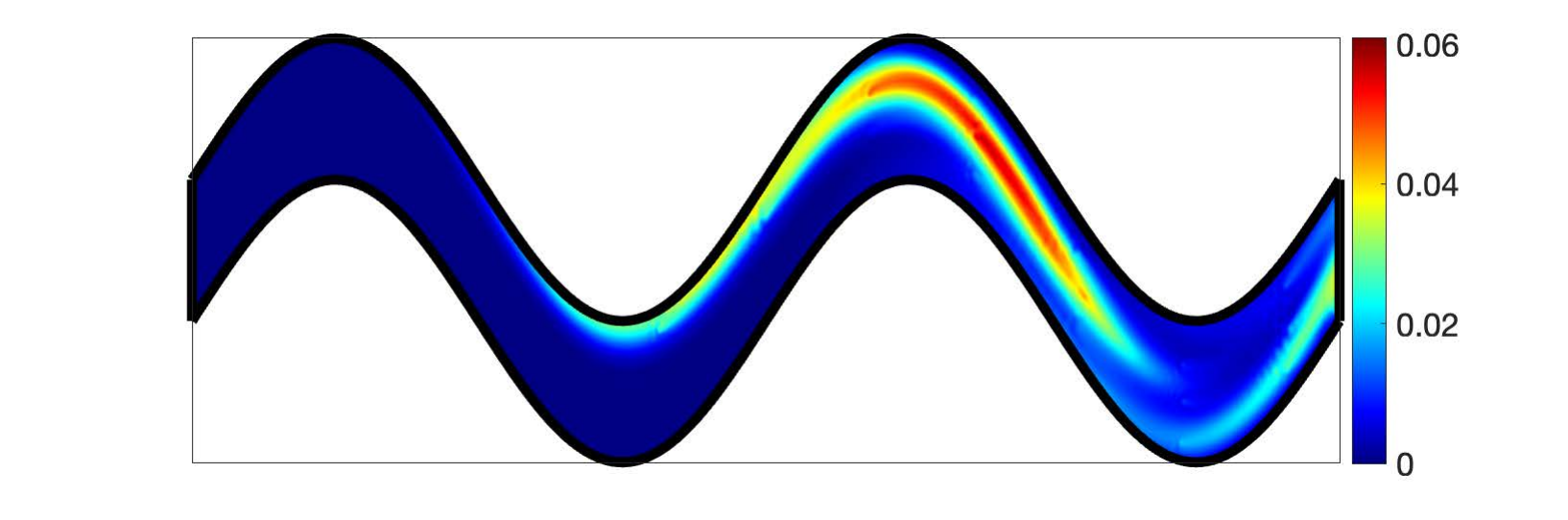}
\includegraphics[width = .48\textwidth, height=.13\textwidth,viewport=90 20 750 290, clip]{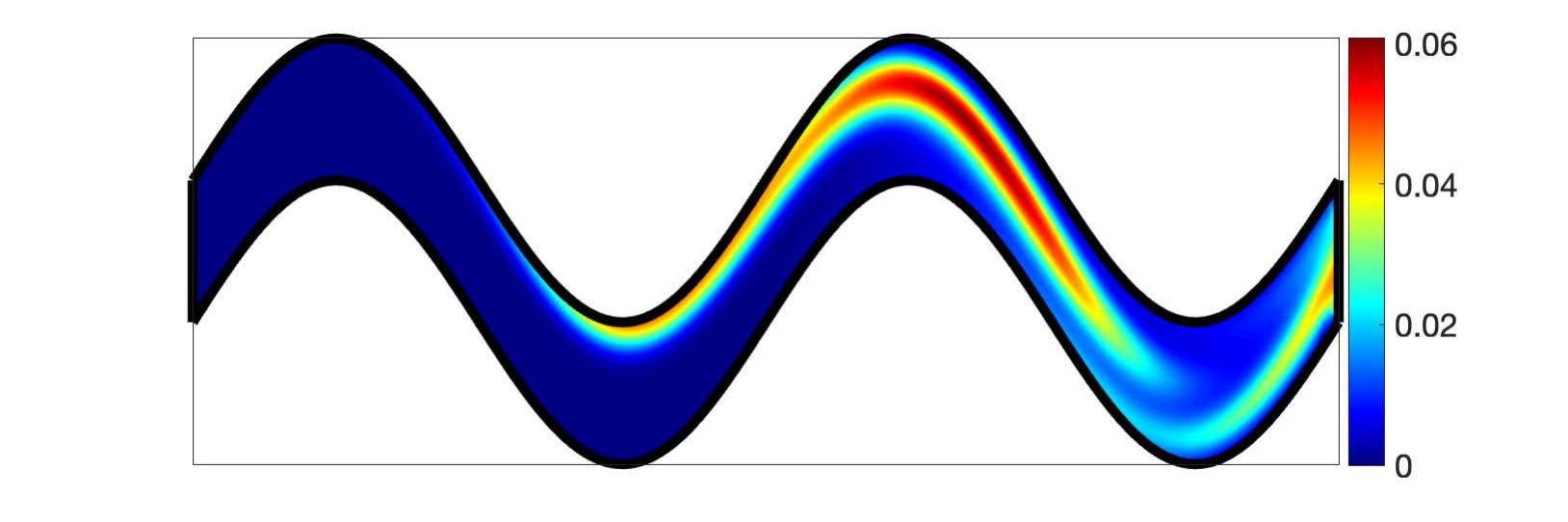}
DA (t=5.0) \hspace{2in} DNS (t=5.0)\\
\includegraphics[width = .48\textwidth, height=.13\textwidth,viewport=90 20 750 290, clip]{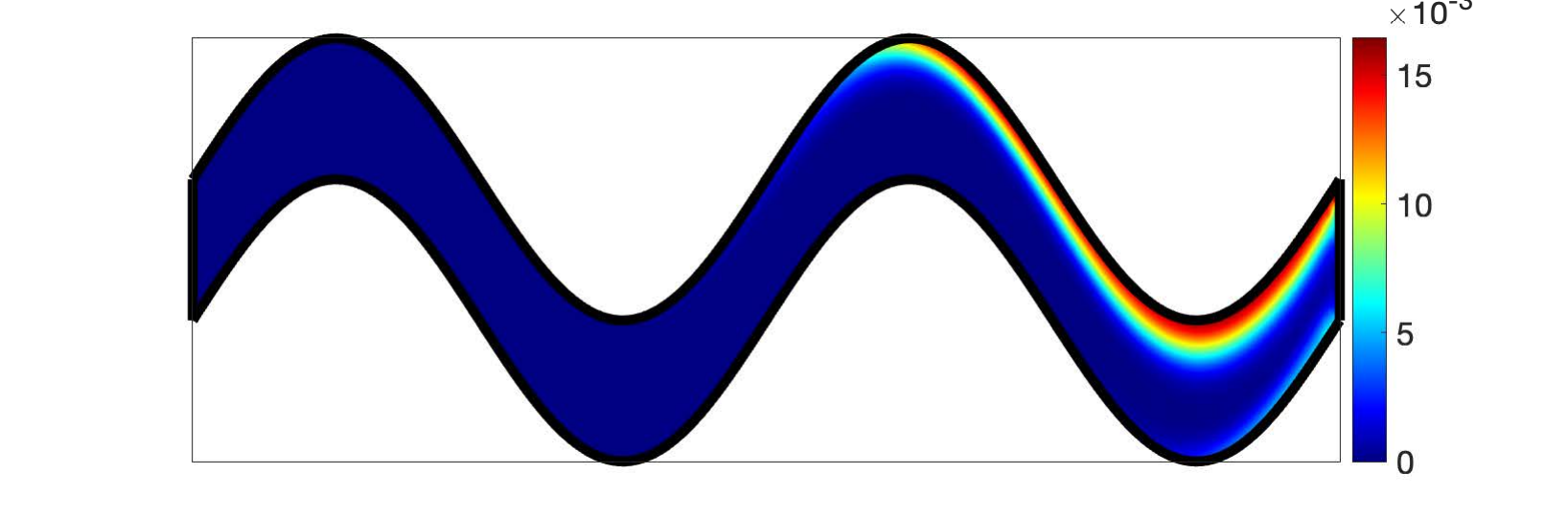}
\includegraphics[width = .48\textwidth, height=.13\textwidth,viewport=90 20 750 290, clip]{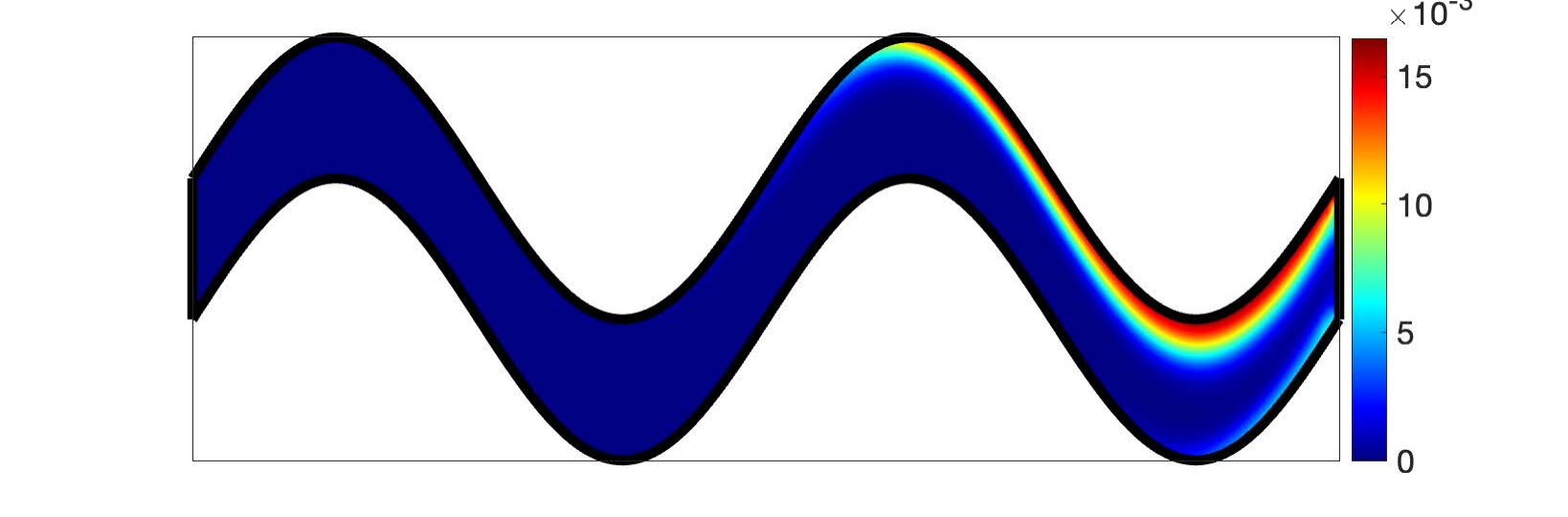}
	\caption{\label{dacontour} Contour plots of DA and DNS velocity magnitudes at times 0, 1, 2 and 5.}
	\end{center}
\end{figure}

Finally, we compute the CDA solution, using BDF2-CDA with the same parameters as the DNS except zero initial conditions, taking the DNS solution $u$ as the true solution, and testing the algorithm using several choices of $\mu$ including $\mu=\infty$ which we implement with direct enforcement.  The 37 measurement nodes are shown in Figure \ref{meshes} and we note that these nodes are also vertices on the finite element mesh.  Convergence of the CDA solution to the true (DNS) solution is shown in Figure \ref{conv}, for varying $\mu$.  We observe very similar convergence for each choice of $\mu$=0.1, 1, 10000, and $\infty$, with the plots for $\mu=10000$ and $\infty$ (direct enforcement) laying on top of each other.  For the case $\mu=\infty$, we show contour plots of the DA and true (DNS) solution at t=0, 1, 2, and 5 in Figure \ref{dacontour}, and observe that the agreement between the solutions increases as time-stepping moves forward and that the solutions are visually indistinguishable at the final stopping time.

\subsection{The Navier-Stokes equations}

\begin{figure}[!ht]
\begin{center}
\includegraphics[width = .6\textwidth, height=.4\textwidth,viewport=0 0 530 400, clip]{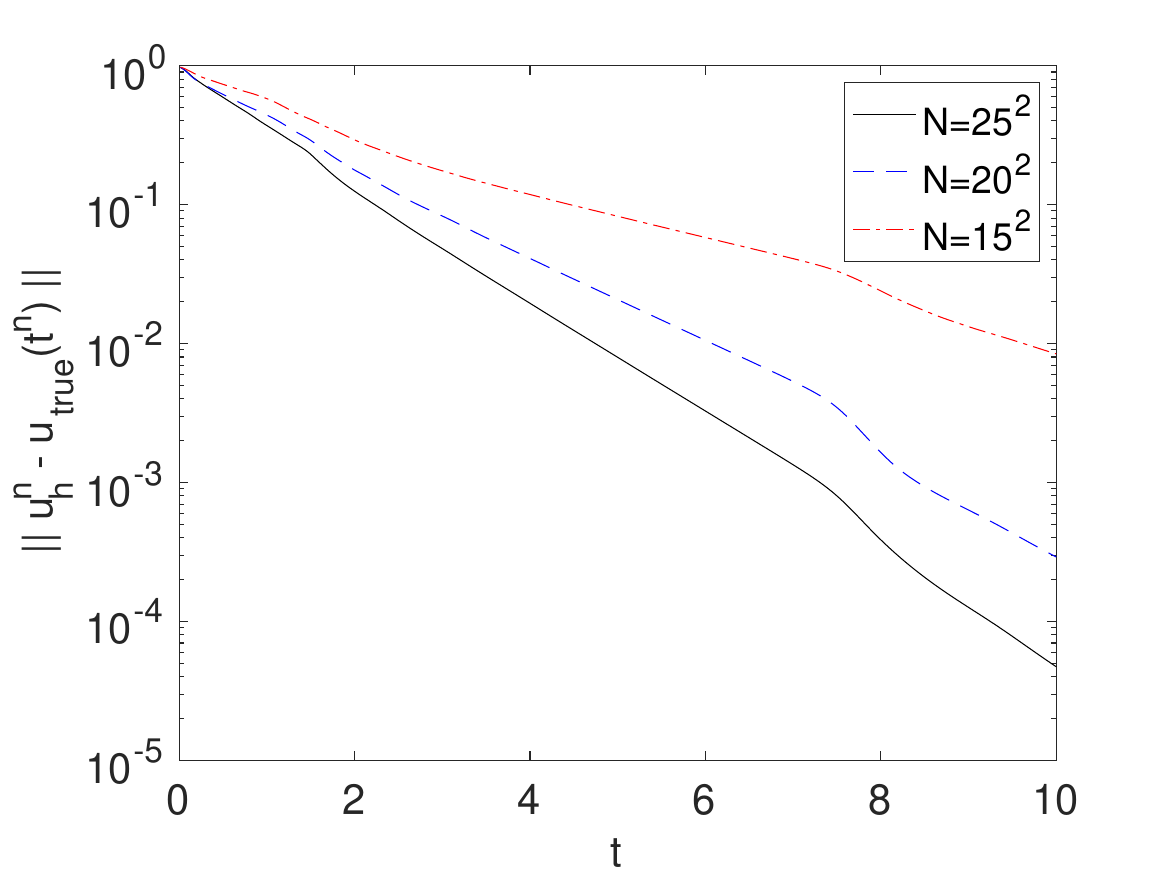}
	\caption{\label{KHplots1} Shown above are convergence plots of the CDA solution to the true solution for the Kelvin-Helmholtz test problem.}
	\end{center}
\end{figure}

To illustrate the theory for the Navier-Stokes equations, we choose a test problem from \cite{SJLLLS18,OR24} for simulating 2D Kelvin-Helmholtz instability.  The domain is taken to be the unit square and enforce periodic
boundary conditions on $x=0,1$ (representing an infinite extension in the horizontal direction).  At $y=0,1$, we enforce a no penetration and free slip condition.  The initial condition for the benchmark problem is set as
\[
u_0(x,y) = \left( \begin{array}{c} u_{\infty} \tanh\left( \frac{2y-1}{\delta_0} \right) \\ 0 \end{array} \right) + c_n \left( \begin{array}{c} \partial_y \psi(x,y) \\ -\partial_x \psi(x,y) \end{array} \right),
\]
where $\delta_0=\frac{1}{28}$ is the initial vorticity thickness, $u_{\infty}=1$ is set as the reference velocity, $c_n$ is a noise/scaling factor that is taken to be $10^{-3}$, and
\[
\psi(x,y) = u_{\infty} \exp \left( -\frac{(y-0.5)^2 }{\delta_0^2} \right) \left( \cos(8\pi x) + \cos(20\pi x) \right).
\]
The Reynolds number for this test is defined by $Re=\frac{\delta_0 u_{\infty}}{\nu} = \frac{1}{28 \nu}$, and $\nu$ is defined by selecting $Re$.  We use $Re=$100 for this test.

We compute solutions for the BDF2-CDA for the NSE similar to the previous experiments: first compute a reference solution (using the initial condition above), and then use this reference solution as the true solution to nudge towards.  We use $(P_2,P_1^{disc})$ Scott-Vogelius elements on a barycenter refinement of an $h=1/96$ uniform triangular mesh, and a time step size of $\Delta t=0.01$.  For the CDA, we take the initial condition to be $w_h^0=w_h^1=0$, and compute using Algorithm \ref{alg1} with $\mu=\infty$ (implemented with direct enforcement) and varying $H$ (=1/25, 1/20, and 1/15) using $N=\frac{1}{H^2}$ measurement points.  Convergence of the CDA solutions to the true solutions are shown in Figure \ref{KHplots1}, and we observe that the method is converging in $L^2$ for all three choices of $H$ with faster convergence as $H$ decreases.  Contour plots of the solution's absolute vorticity magnitudes are shown in Figure \ref{KHplots2}, along with those of the true solution.  At $t=2$, none of the CDA solutions appear accurate.  By $t=5$, the N=20 and 25 solutions are already approaching the true solution, and by $t=8$ their solutions are visually indistinguishable from the true solution.  At $t=9$, there are still visible errors in the $N=15$ solution, demonstrating that the inclusion of more information yields faster convergence.

\begin{figure}[!ht]
\begin{center}
\ \ \ \ NSE  \hspace{.9in} DA (N=$15^2$)  \hspace{.7in} DA (N=$20^2$)  \hspace{.7in} DA (N=$25^2$) \\
\includegraphics[width = .24\textwidth, height=.22\textwidth,viewport=65 40 530 400, clip]{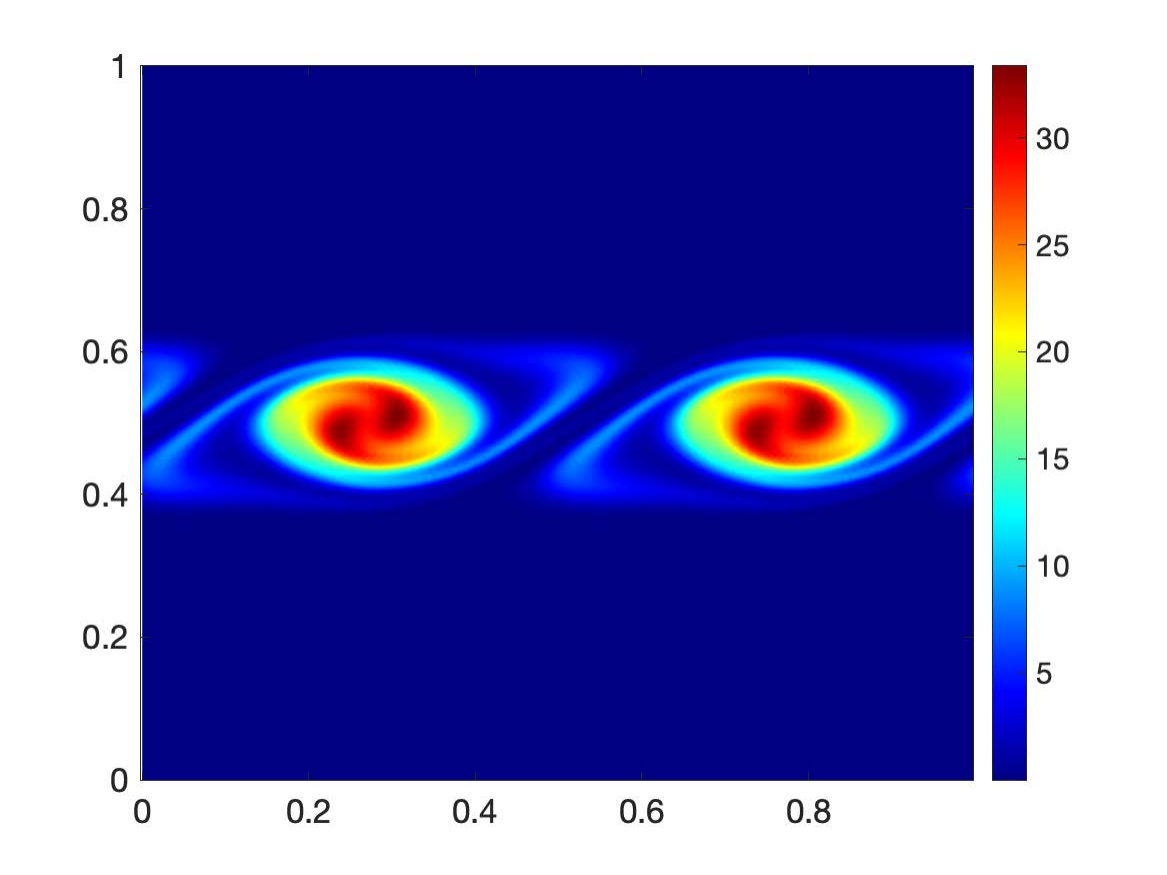}
\includegraphics[width = .24\textwidth, height=.22\textwidth,viewport=65 40 530 400, clip]{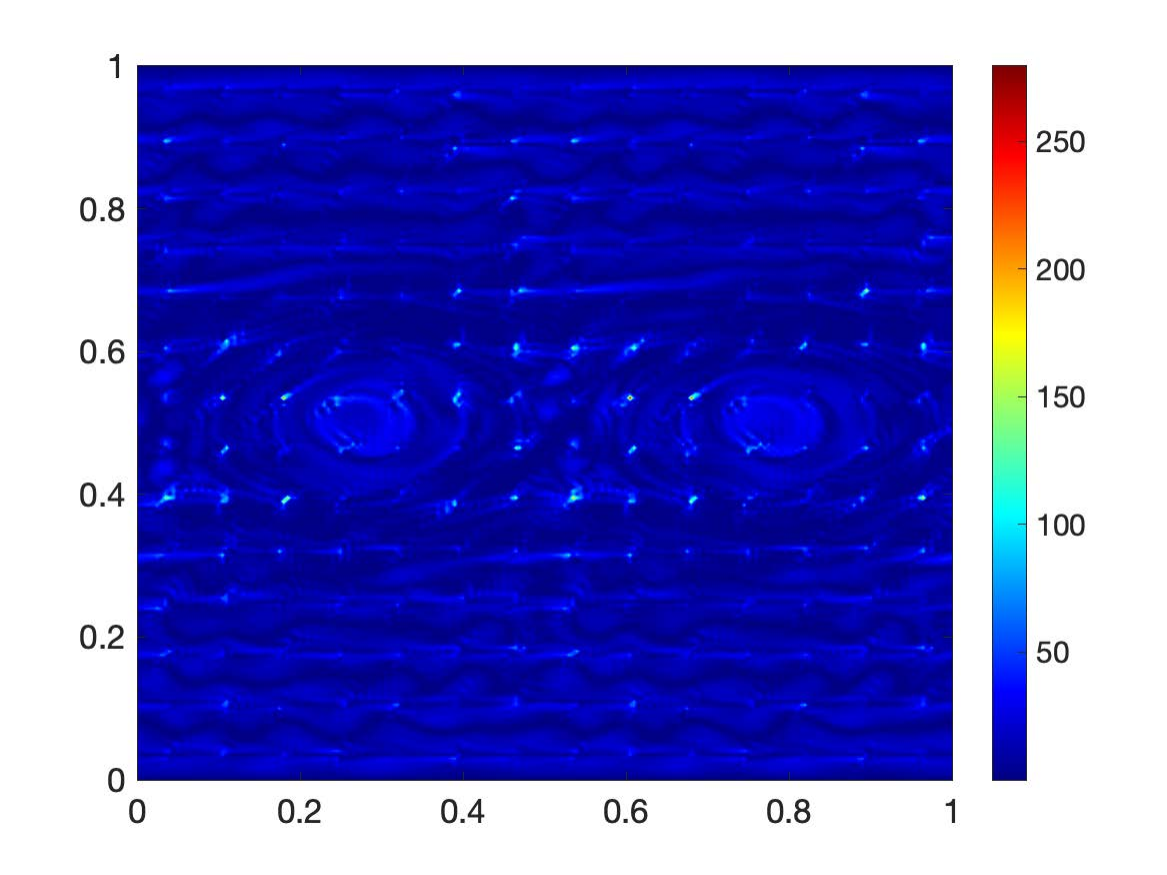}
\includegraphics[width = .24\textwidth, height=.22\textwidth,viewport=65 40 530 400, clip]{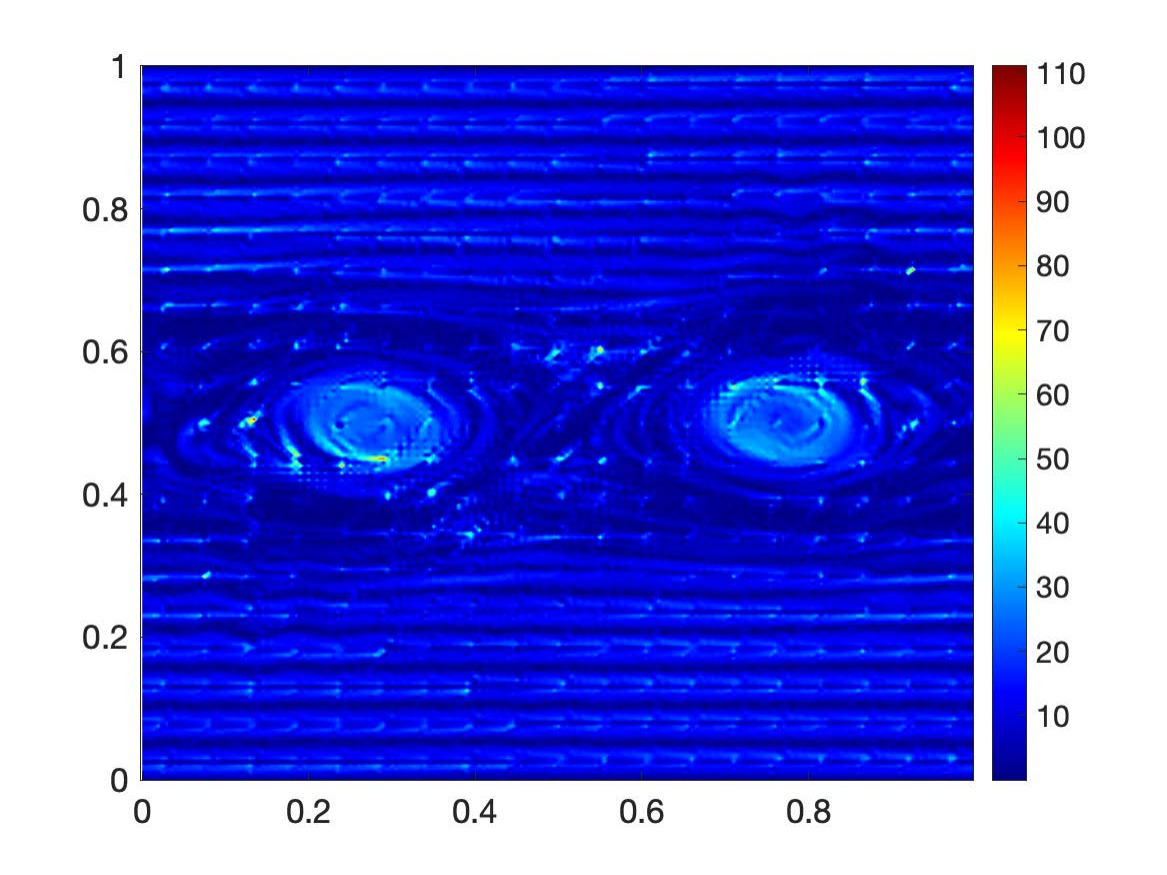}
\includegraphics[width = .24\textwidth, height=.22\textwidth,viewport=65 40 530 400, clip]{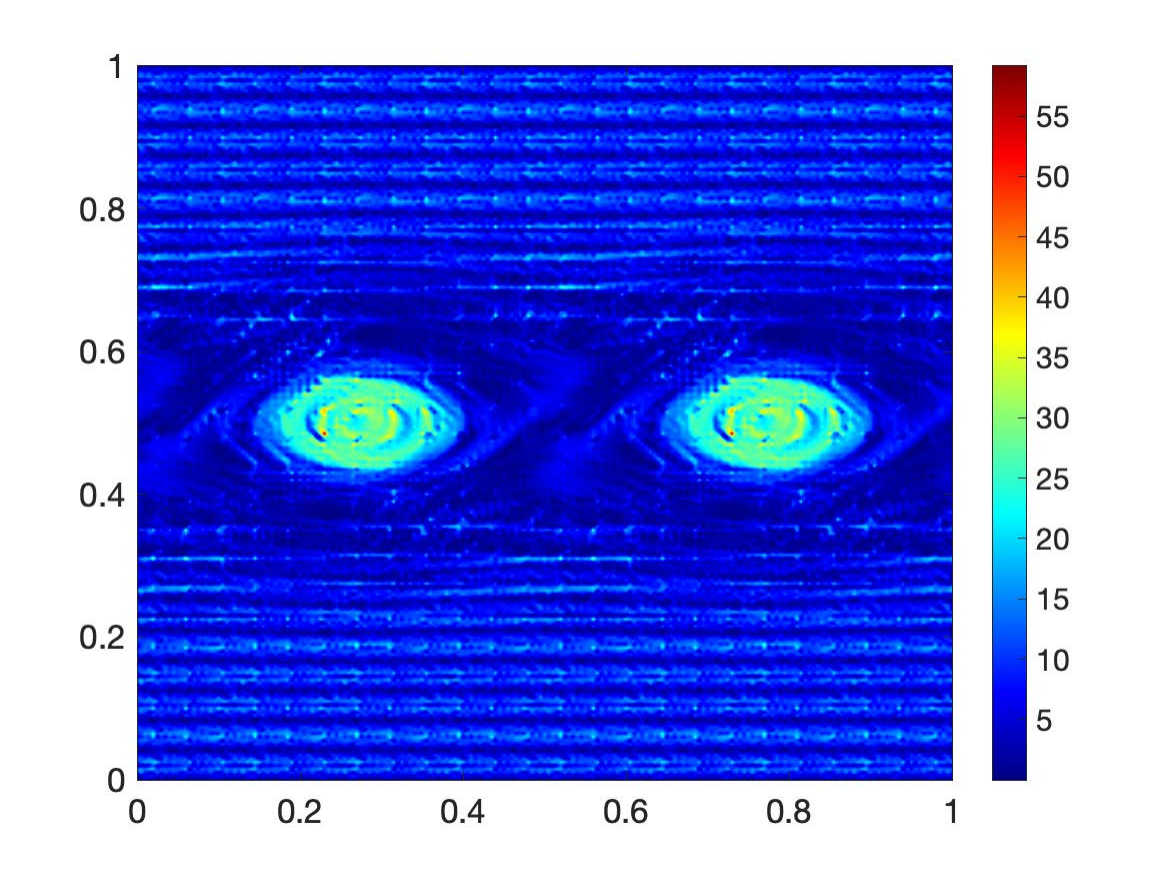}\\
\includegraphics[width = .24\textwidth, height=.22\textwidth,viewport=65 40 530 400, clip]{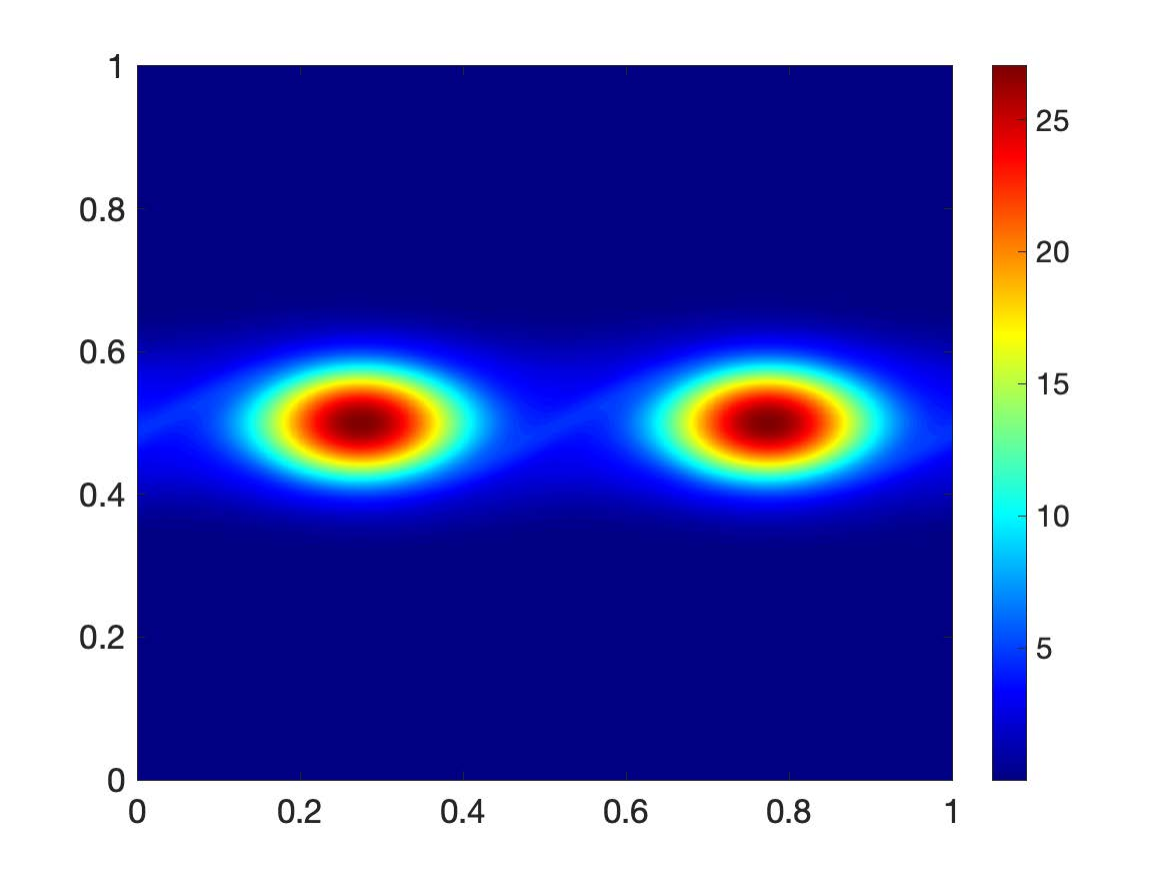}
\includegraphics[width = .24\textwidth, height=.22\textwidth,viewport=65 40 530 400, clip]{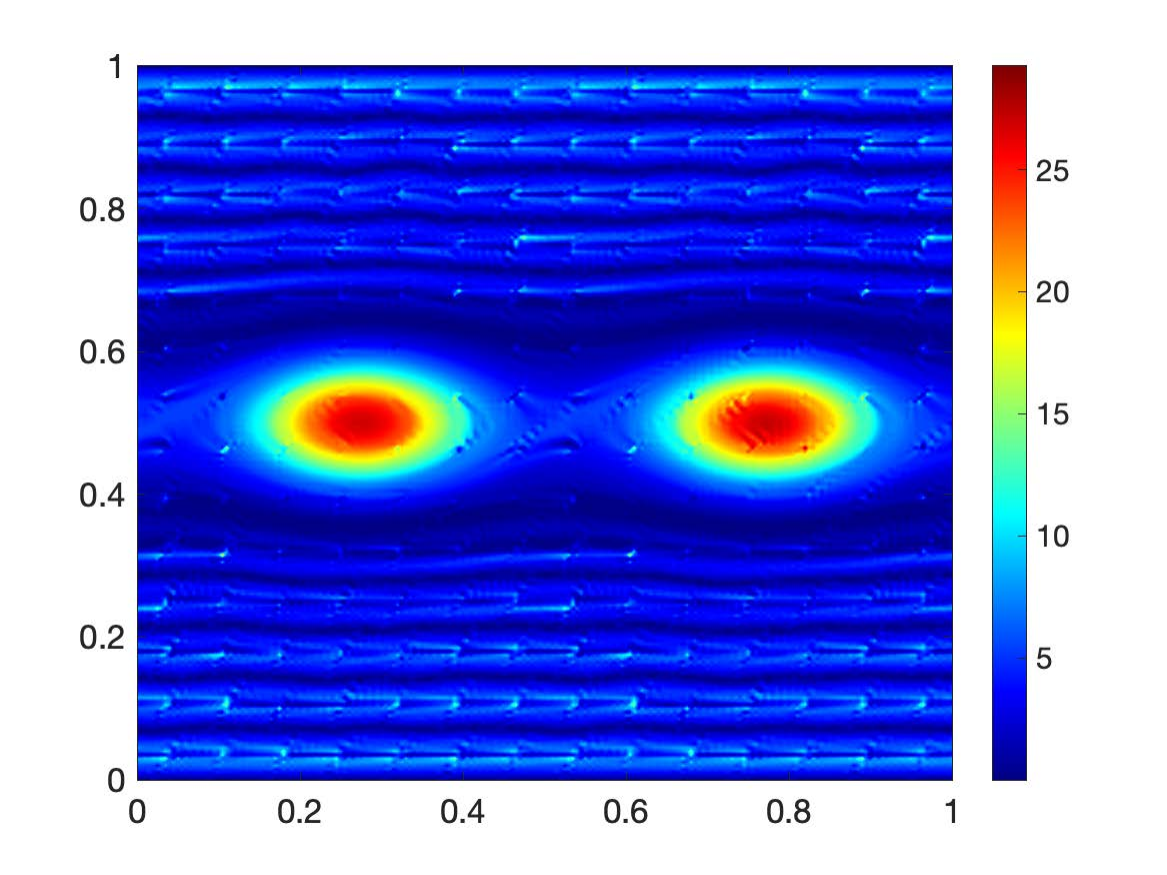}
\includegraphics[width = .24\textwidth, height=.22\textwidth,viewport=65 40 530 400, clip]{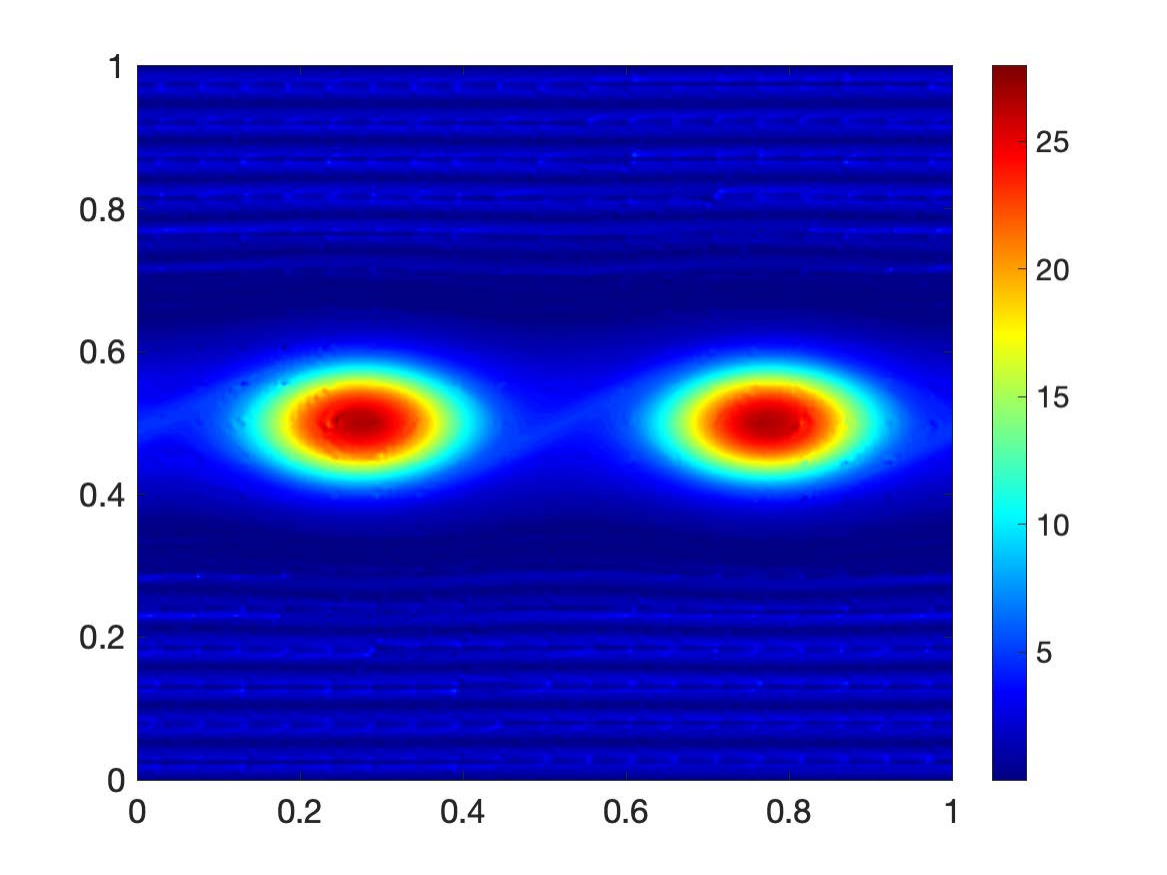}
\includegraphics[width = .24\textwidth, height=.22\textwidth,viewport=65 40 530 400, clip]{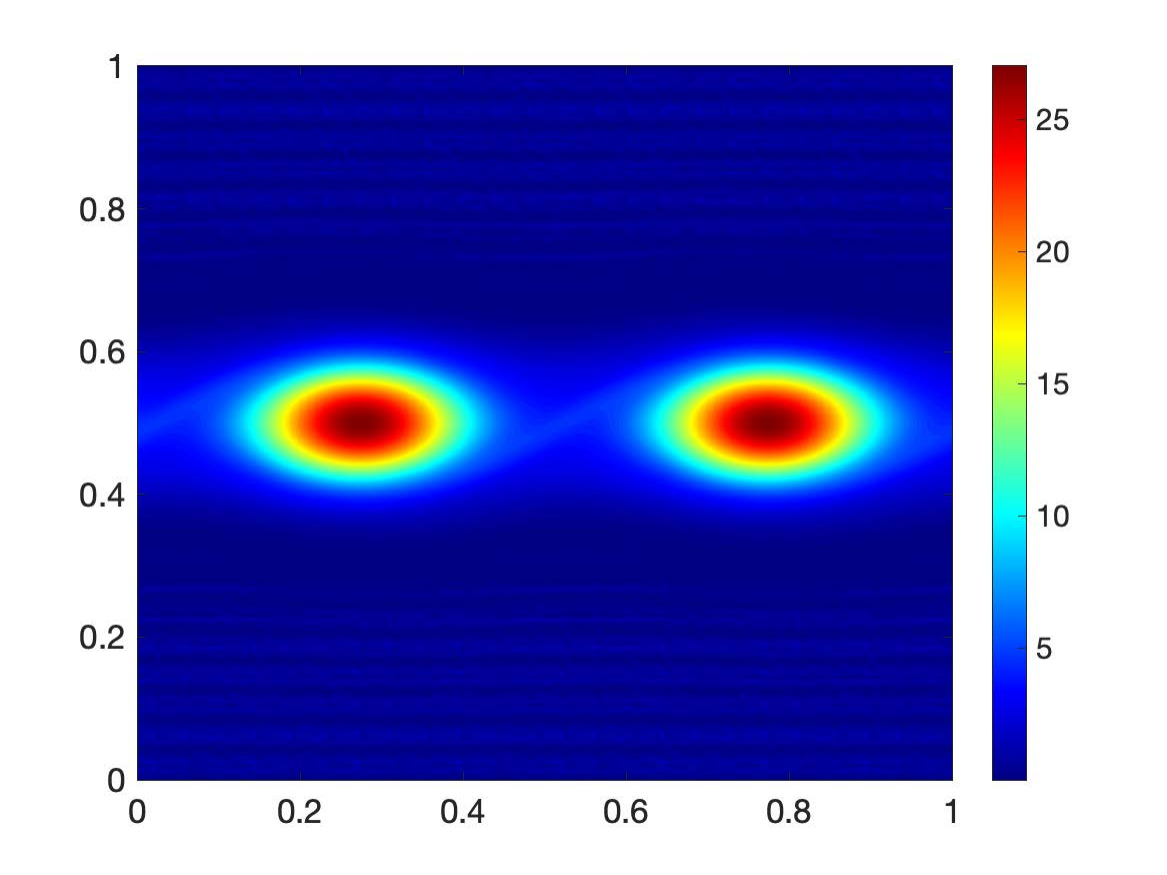}\\
\includegraphics[width = .24\textwidth, height=.22\textwidth,viewport=65 40 530 400, clip]{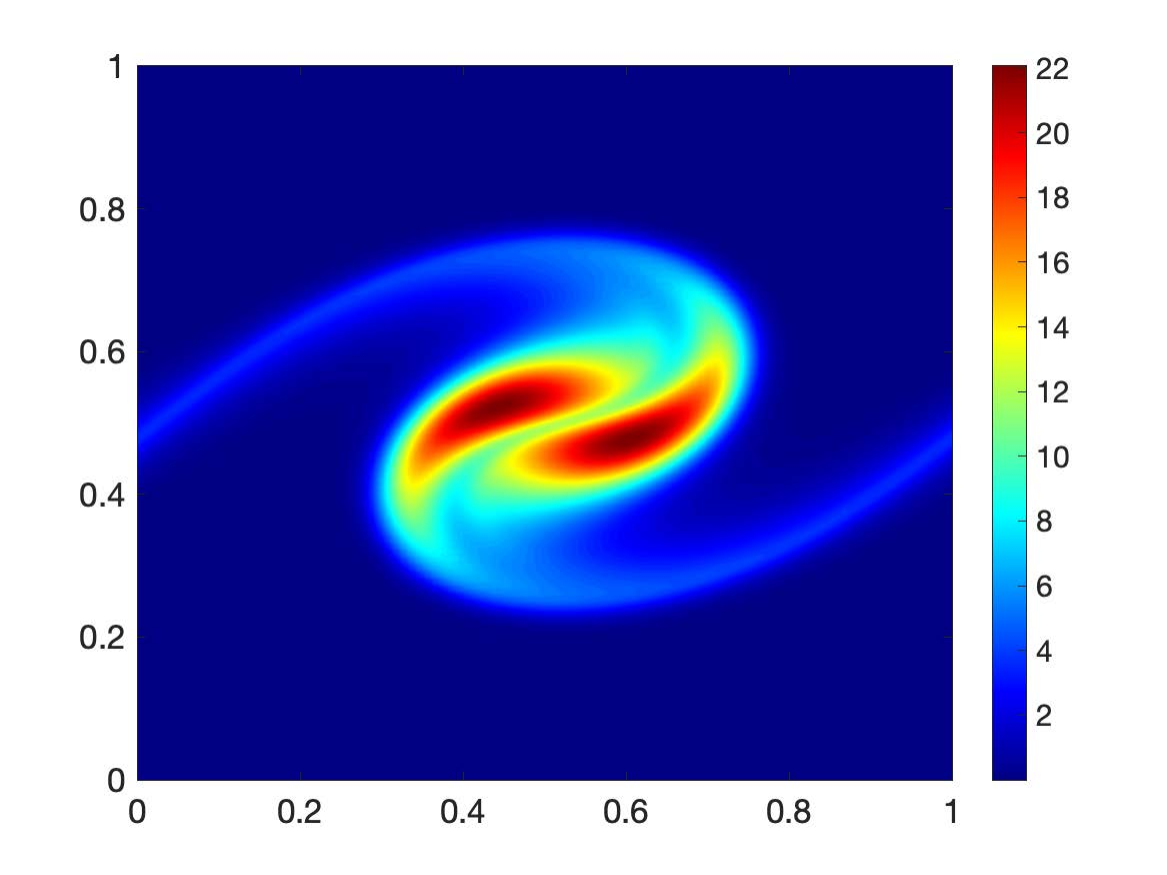}
\includegraphics[width = .24\textwidth, height=.22\textwidth,viewport=65 40 530 400, clip]{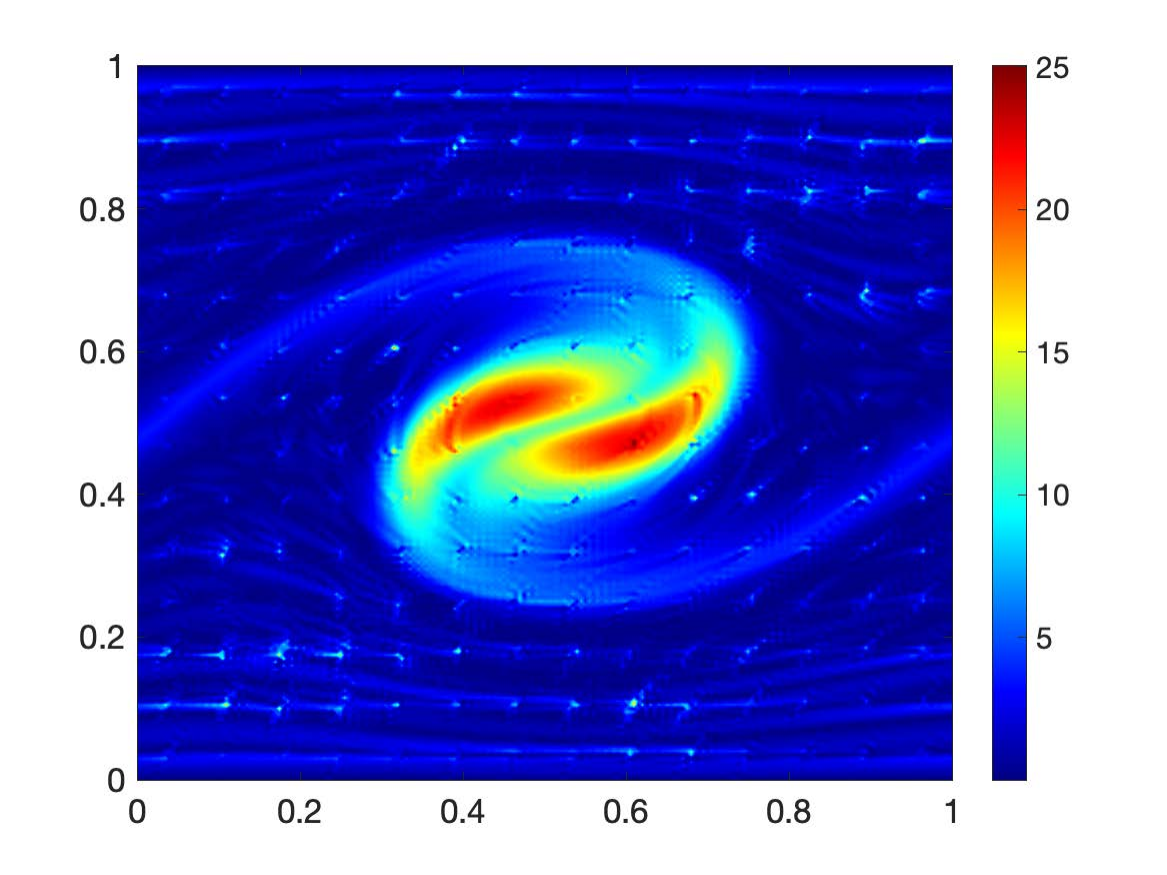}
\includegraphics[width = .24\textwidth, height=.22\textwidth,viewport=65 40 530 400, clip]{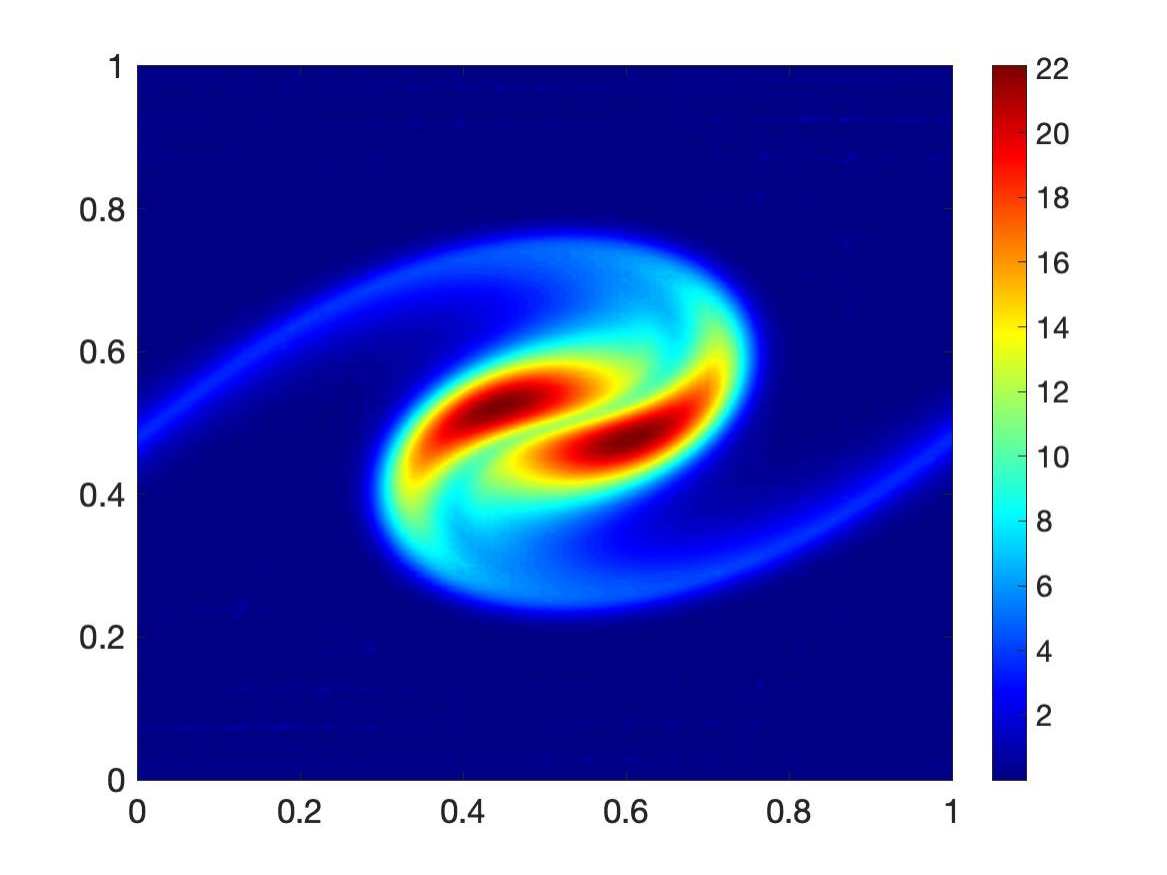}
\includegraphics[width = .24\textwidth, height=.22\textwidth,viewport=65 40 530 400, clip]{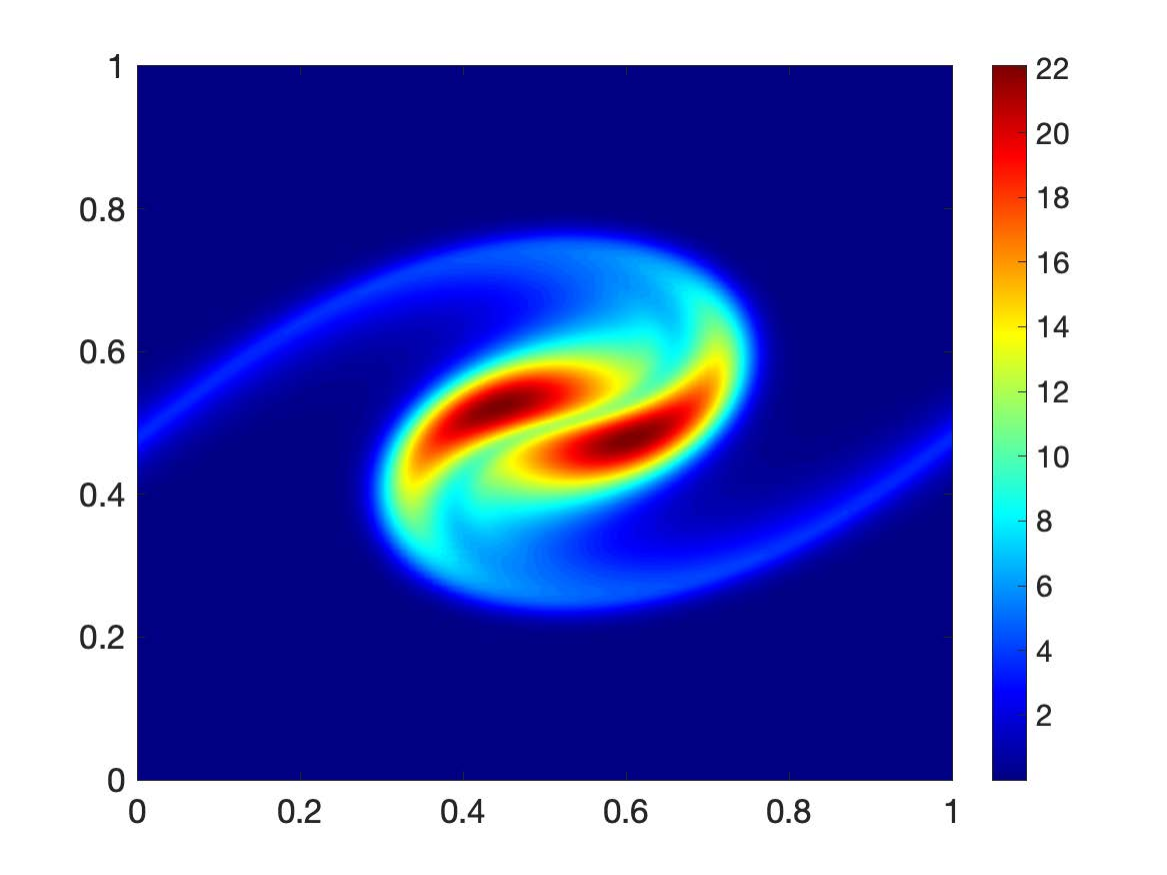}\\
\includegraphics[width = .24\textwidth, height=.22\textwidth,viewport=65 40 530 400, clip]{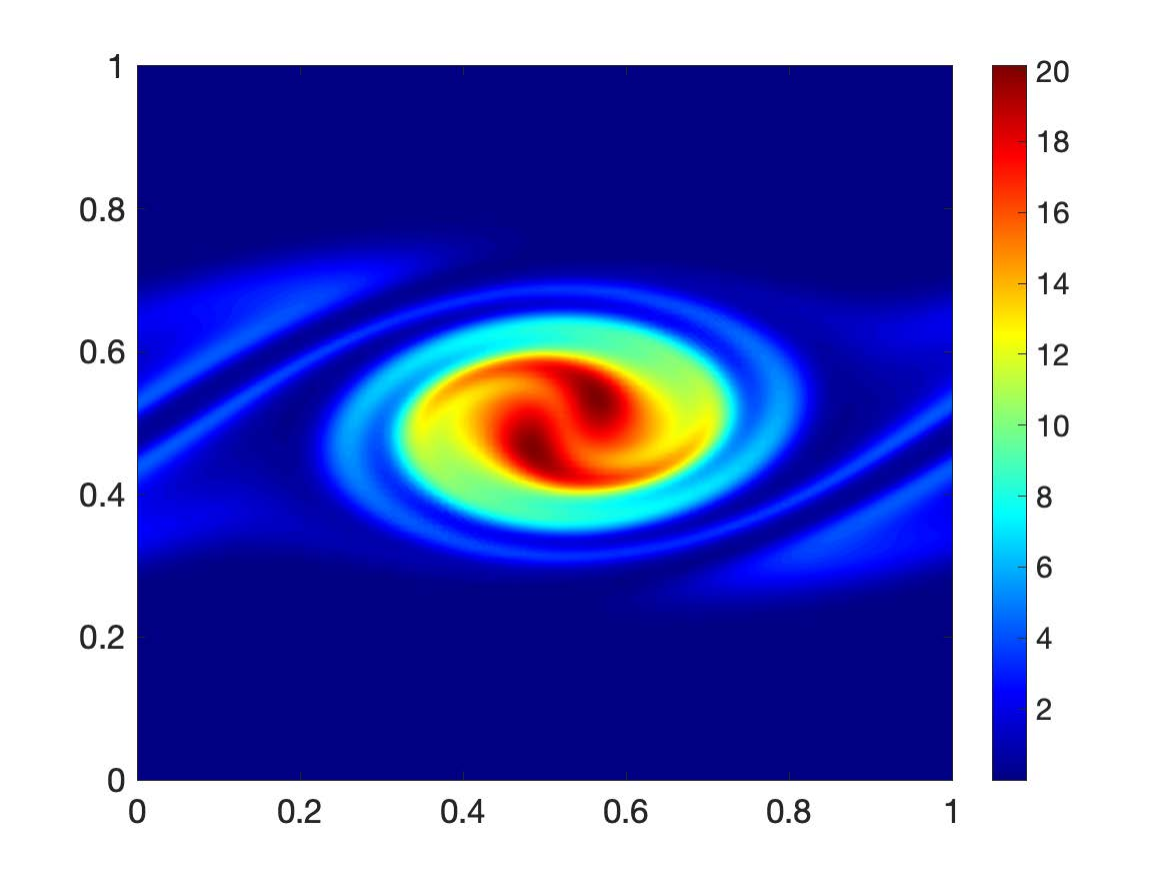}
\includegraphics[width = .24\textwidth, height=.22\textwidth,viewport=65 40 530 400, clip]{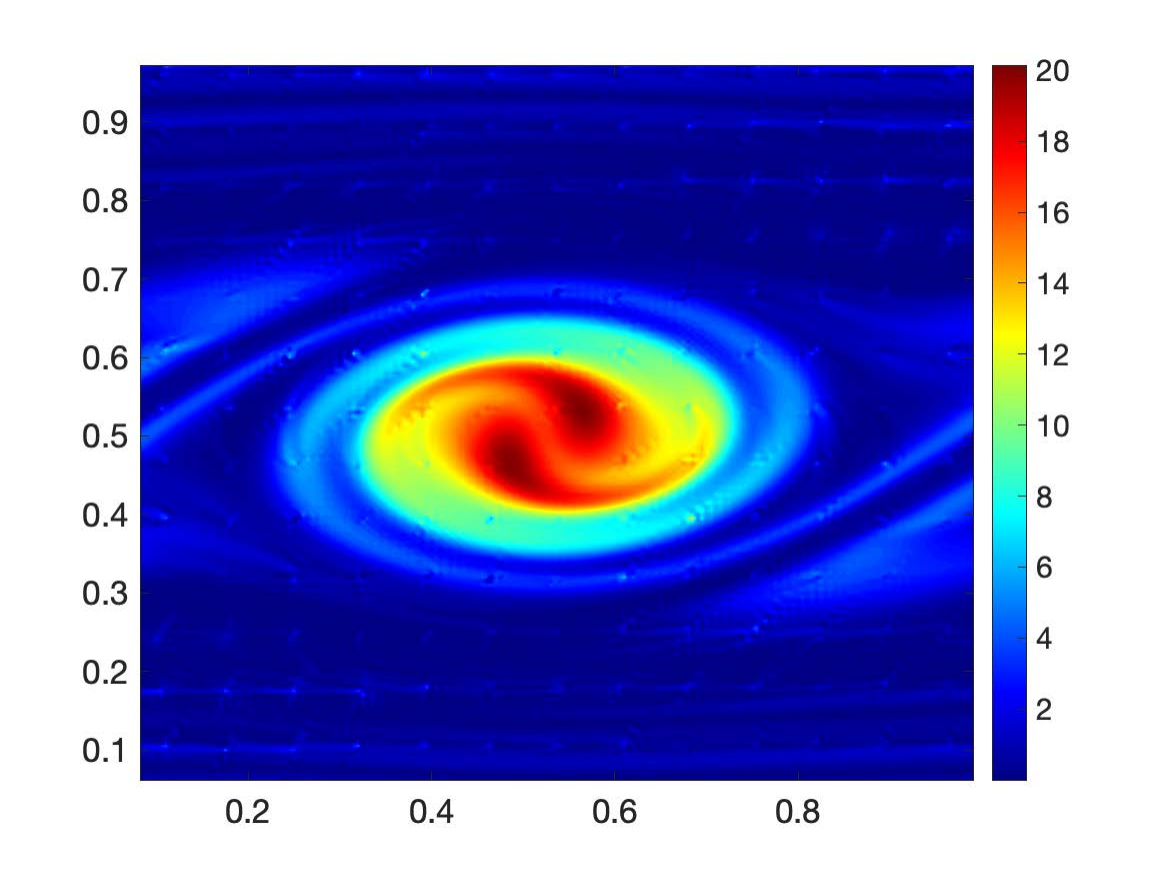}
\includegraphics[width = .24\textwidth, height=.22\textwidth,viewport=65 40 530 400, clip]{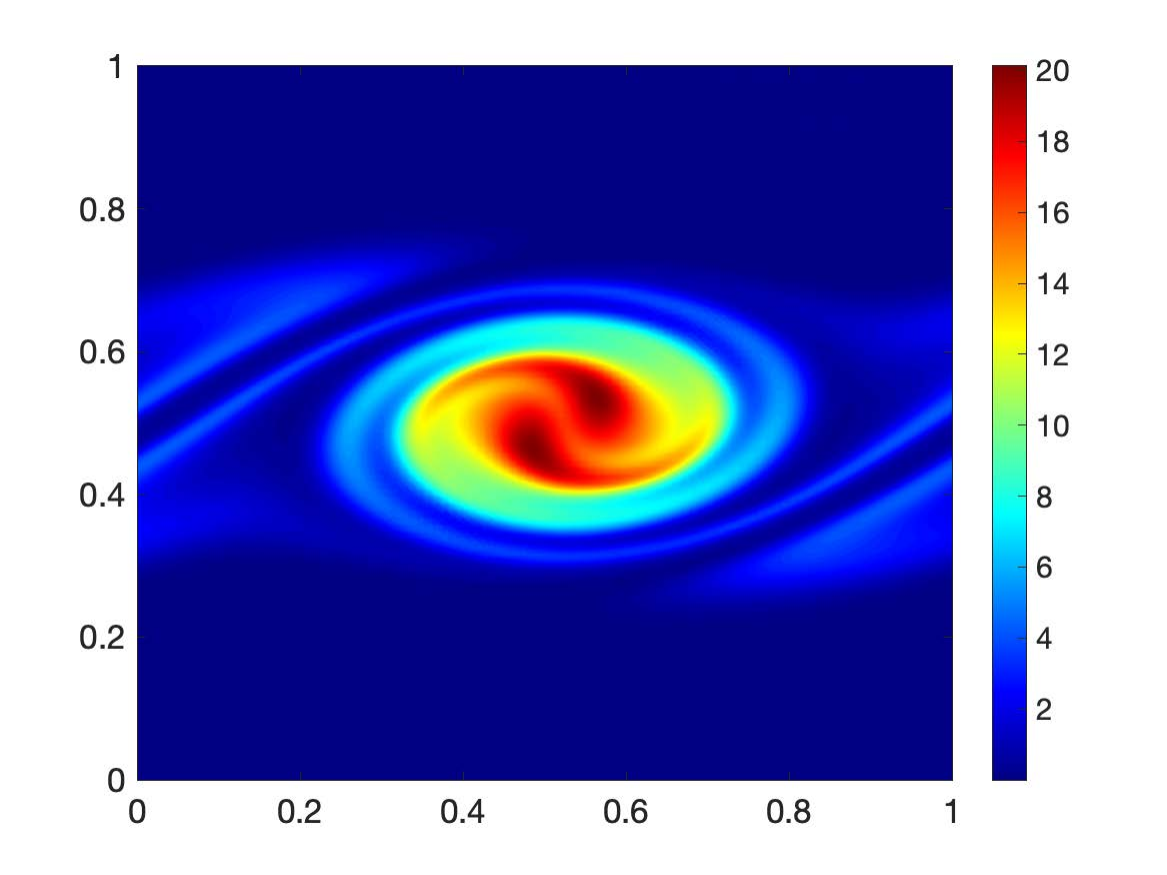}
\includegraphics[width = .24\textwidth, height=.22\textwidth,viewport=65 40 530 400, clip]{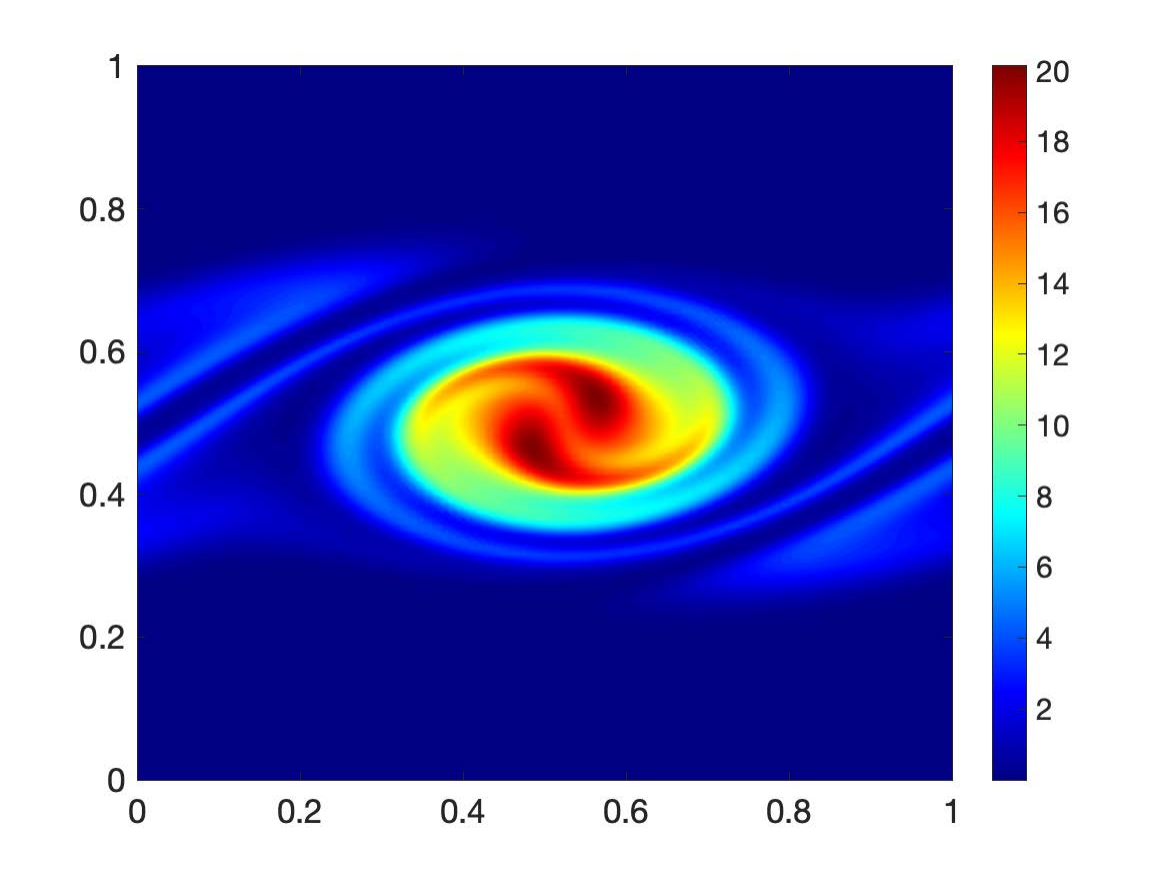}\\
	\caption{\label{KHplots2} Contour plots of DA and DNS absolute vorticity magnitudes at times 2, 5, 8, and 9, reading from top to bottom.}
	\end{center}
\end{figure}

\subsection{The Cahn-Hilliard equation} \label{sec:CH-Num-Exp}

In this subsection, we briefly demonstrate via a numerical experiment that similar CDA projections could be established for different types of evolutionary PDEs. Specifically, we consider the Cahn-Hilliard (CH) equation \cite{Cahn:61:spinodal,CH:58:free,Miranville:19:CH}
\begin{align}\label{eq:continuous-problem}
\begin{split}
\partial_t \varphi - \Delta \left(\varphi^3 - \varphi\right) + \varepsilon^2  \Delta^2 \varphi & = 0, \\
\varphi(\cdot,0) &=\varphi_0,
\end{split}
\end{align}
where $\varphi$ represents the order parameter which takes on values between $-1$ and $1$ and is often interpreted as a concentration of one component in a two component system. The states $\varphi = \pm 1$ indicate phases of pure concentration and $\varepsilon > 0$ is understood to represent an interfacial width between the two phases. The following CDA-FEM algorithm for the CH equation \eqref{eq:continuous-problem} was analyzed in \cite{DR22}: Given $\phi_h^{n-1} \in Z_h$ and true solution $\varphi \in L^\infty(0,T;H_N^2(\Omega))$, find $\phi_h^{n} \in  Z_h$ such that 
\begin{align}\label{eq:fully-discrete-fem}
\left(\frac{\phi_h^{n} - \phi_h^{n-1}}{\Delta t}, \psi\right) + \left(\nabla \left(\left(\phi_h^n\right)^3 - \phi_h^{n-1}\right),\nabla \psi\right) + \varepsilon^2 \aIPh{\phi_h^{n}}{\psi} + \mu \left(I_H \left(\phi_h^n - \varphi^n\right),\psi\right) =0 ,
\end{align}
for all $\psi \in Z_h$ with $Z_h := H^1\left(\Omega\right)\cap P_k(\tau_h(\Omega))$. Here, the bilinear form $\aIPh{\cdot}{\cdot}$ has the following definition:
\begin{align}
\label{eq:aIPh-def}
\aIPh{w}{v} :=& \, \sum_{K \in \tau_h}  \int_K \left(\nabla^2 w : \nabla^2 v\right) \, dx + \sum_{e \in \mathscr{E}_h} \int_e \left\{\mskip-5mu\left\{ \frac{\partial^2 w}{\partial n_e^2} \right\}\mskip-5mu\right\} \left\llbracket \frac{\partial v}{\partial n_e} \right\rrbracket dS 
\nonumber
\\
&+ \sum_{e \in \mathscr{E}_h} \int_e \left\{\mskip-5mu\left\{\frac{\partial^2 v}{\partial n_e^2} \right\}\mskip-5mu\right\}  \left\llbracket \frac{\partial w}{\partial n_e} \right\rrbracket dS + \sigma \sum_{e \in \mathscr{E}_h} \frac{1}{|e|} \int_e  \left\llbracket \frac{\partial w}{\partial n_e} \right\rrbracket  \left\llbracket \frac{\partial v}{\partial n_e} \right\rrbracket dS,
\end{align}
with $\sigma \ge 1$ known as a penalty parameter. The jumps and averages that appear in \eqref{eq:aIPh-def} are defined as follows. For an interior edge $e$ shared by two triangles $K_\pm$ where $n_e$ points from $K_-$ to $K_+$, we define on the edge $e$
\begin{align}
\label{eq:jumps-ave-interior}
\left\llbracket \frac{\partial v}{\partial n_e} \right\rrbracket = n_e \cdot \left(\nabla v_+ - \nabla v_-\right),
\quad
\left\{\mskip-5mu\left\{\frac{\partial^2 v}{\partial n_e^2} \right\}\mskip-5mu\right\} = \frac{1}{2}\left( \frac{\partial^2 v_-}{\partial n_e^2} +  \frac{\partial^2 v_+}{\partial n_e^2} \right),
\end{align}
where $\displaystyle \frac{\partial^2 v}{\partial n_e^2} = n_e \cdot \left(\nabla^2 v\right) n_e$ and $v_\pm = v |_{K_\pm}$. For a boundary edge $e$, we take $n_e$ to be the unit normal pointing towards the outside of $\Omega$ and define on the edge $e$
\begin{align}\label{eq:jumps-ave-boundary}
\left\llbracket \frac{\partial v}{\partial n_e} \right\rrbracket = - n_e \cdot \nabla v_K ,
\quad
\left\{\mskip-5mu\left\{\frac{\partial^2 v}{\partial n_e^2} \right\}\mskip-5mu\right\} = n_e \cdot \left( \nabla^2 v \right) n_e.
\end{align}

In \cite{DR22}, it was once again determined that an upper bound to the nudging parameter $\mu$ was required in the analysis of the FEM CDA scheme. However, the following numerical experiment suggests otherwise, in which we consider a square domain $\Omega = (0,1)^2$ and take $\tau_h(\Omega)$ to be a regular triangulation of $\Omega$ consisting of right isosceles triangles that are a quasi-uniform family. (We use a family of meshes $\tau_h(\Omega)$ such that no triangle in the mesh has more than one edge on the boundary.) 

As the Firedrake Project \cite{R:16:firedrake} was utilized this experiment, we now describe how the data assimilation term $\mu \left(I_H(\phi_h^n - \varphi^n), { I_H } \psi\right)$ was implemented. Specifically, a true solution $\varphi$ was obtained at all times by selecting a cross shaped region as initial conditions, setting the interfacial width parameter $\varepsilon = 0.002$ and the nudging parameter $\mu = 0$, and solving the CH equation using the C$^0$ interior penalty FEM \eqref{eq:fully-discrete-fem}. A data assimilation grid size $H$ was chosen and grid points were identified and located on the finite element mesh. 
Finally, the initial conditions for the numerical solution $\phi_0$ was set to random initial conditions.

Figure \ref{fig:grid-error-CH} demonstrates the effectiveness of the CDA-FEM for various grid sizes $H$ with a large nudging parameter of $\mu = 10^9$. Specifically, we choose five different grid sizes $H =$ 0.011049, 0.015625, 0.03125, 0.0625, and 0.125, which correspond respectively to 8,100, 4,096, 1,024, 256 and 64 grid points, while the fine mesh uses piecewise quadratics and has 33,025 grid points.   Theorem 4.3 from \cite{DR22} provides a sufficient condition that the grid size should be chosen as $H = \mathcal{O}( \varepsilon^2)$ but our experiments suggest that a grid size much coarser than that will produce good results if a large enough nudging parameter is chosen. This also suggests that the new theory developed herein should allow for a relaxation of the restriction on the size of the data assimilation grid.

\begin{figure}[h!]
\centering
\includegraphics[width=0.5\textwidth]{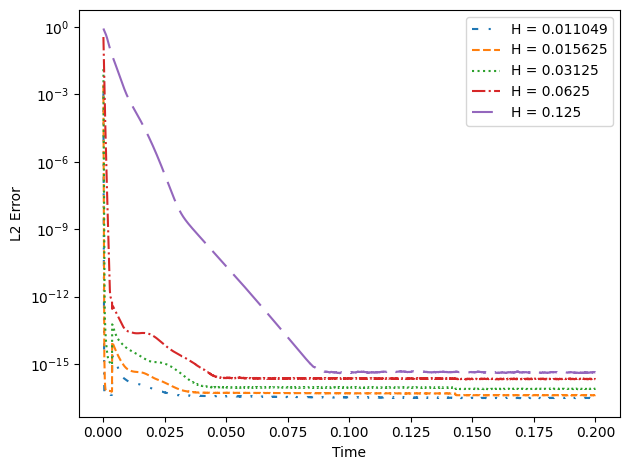}
\caption{The $L^2$ error between the true solution and the solution to the data assimilation finite element method for various data assimilation grid sizes. The mesh size is $h = \nicefrac{\sqrt{2}}{64}$ and the time step size is $\Delta t = \nicefrac{0.002}{32}$. All other parameters are defined in the text. }
\label{fig:grid-error-CH}
\end{figure}

Figure \ref{fig:nudge-error-CH} demonstrates the effectiveness of the CDA-FEM for various values of the nudging parameter $\mu$. For this experiment, we set the data assimilation grid to be $H = 0.03125,$ and choose four different values for the nudging parameter $\mu = 2500, 5000, 10000,$ and $10^9$. Theorem 4.3 from \cite{DR22} admits a sufficient condition that the appropriate value for the nudging parameter $\mu$ is at least $\nicefrac{1}{\varepsilon^2} = 2500$, but if $\mu$ is too large then $H$ needs to be very small.  However, our experiments show that good results can also be obtained for much larger values of $\mu$ without requiring $H$ to be very small.

\begin{figure}[h!]
\centering
\includegraphics[width=0.5\textwidth]{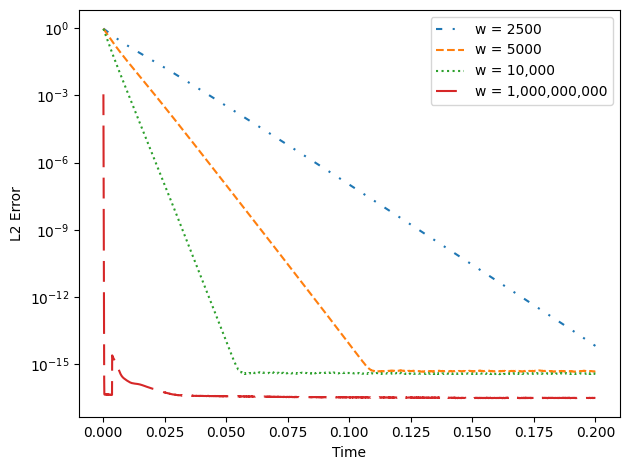}
\caption{The $L^2$ error between the true solution and the solution to the CDA-FEM for various values of the nudging parameter.  The mesh size is $h = \nicefrac{\sqrt{2}}{64}$ and the time step size is $\Delta t = \nicefrac{0.002}{32}$. All other parameters are defined in the text. }
\label{fig:nudge-error-CH}
\end{figure}

\section{Conclusion}\label{Con}
In this paper, we proved optimal long time convergence in $L^2$ for finite element methods with continuous data assimilation without requiring any upper restrictions on the nudging parameter $\mu,$ and without any negative influence of $\mu$ on the error estimate. The goal was achieved by proposing a CDA Poisson projection along with an optimal projection error estimate, in which a harmonic approximation and nodal interpolant played key roles in the the analysis. We then applied the CDA Poisson projection to CDA convergence { analysis} for two typical evolution PDEs: the heat equation and the Navier-Stokes equations. The key contribution here is the fact that with these new projection, we were able to eliminate the scaling of the error with $\mu$ as $\mu$ gets large.   This in turn allows for choosing large or very large $\mu$, with the latter being numerically equivalent to direct Dirichlet enforcement of the measurement data into the discrete solution.
Multiple numerical tests illustrate the theory.

Future work will consider the case of how to choose $\mu$ for noisy data.  In many applications of CDA, the data measurements will have error present, and it is an open question of how $\mu$ should be chosen if certain properties of the noise are known, such as signal to noise ratio or distribution of the noise.

\section{Acknowledgements}
Authors Xuejian Li and Leo G. Rebholz were partially supported by NSF grant No.~DMS-2152623. Author Amanda E. Diegel was partially supported by NSF grant No.~DMS-2110768. Also, the authors thank Dr. Quyuan Lin for helpful discussions regarding to this work.  




\begin{thebibliography}{10}

\bibitem{Albanez_Nussenzveig_Lopes_Titi_2016}
D.~Albanez, H.~Nussenzveig~Lopes, and E.~Titi.
\newblock Continuous data assimilation for the three-dimensional
  {N}avier--{S}tokes-$\alpha$ model.
\newblock {\em Asymptotic Anal.}, 97(1-2):139--164, 2016.

\bibitem{Anthes_1974_JAS}
R.A. Anthes.
\newblock Data assimilation and initialization of hurricane prediction models.
\newblock {\em J. Atmos. Sci.}, 31(3):702--719, 1974.

\bibitem{AMGC2002}
M.S. Arulampalam, S.~Maskell, N.~Gordon, and T.~Clapp.
\newblock A tutorial on particle filters for online nonlinear/non-{Gaussian}
  {Bayesian} tracking.
\newblock {\em IEEE T. Signal Proces.}, 50(2):174–188, 2002.

\bibitem{AN2012}
D.~Auroux and M.~Nodet.
\newblock The back and forth nudging algorithm for data assimilation problems:
  theoretical results on transport equations.
\newblock {\em ESAIM Control Optim. Calc. Var.}, 18(2):318--342, 2012.

\bibitem{DJ2005}
D.~Aurouxa and J.~Blum.
\newblock {Back and forth nudging algorithm for data assimilation problems}.
\newblock {\em C. R. Math. Acad. Sci. Paris}, 340(12):873--878, 2005.

\bibitem{ABR01}
S.~Axler, P.~Bourdon, and W.~Ramey.
\newblock {\em {H}armonic {F}unction {T}heory}.
\newblock Springer, New York, 2001.


\bibitem{AOT14}
A.~Azouani, E.~Olson, and E.S. Titi.
\newblock Continuous data assimilation using general interpolant observables.
\newblock {\em J. Nonlinear Sci.}, 24:277--304, 2014.

\bibitem{Bessaih_Olson_Titi_2015}
H.~Bessaih, E.~Olson, and E.S. Titi.
\newblock Continuous data assimilation with stochastically noisy data.
\newblock {\em Nonlinearity}, 28(3):729--753, 2015.

\bibitem{Biswas_Hudson_Larios_Pei_2017}
A.~Biswas, J.~Hudson, A.~Larios, and Y.~Pei.
\newblock Continuous data assimilation for the 2{D} magnetohydrodynamic
  equations using one component of the velocity and magnetic fields.
\newblock {\em Asymptot. Anal.}, 108(1-2):1--43, 2018.

\bibitem{Cahn:61:spinodal}
J.W. Cahn.
\newblock On spinodal decomposition.
\newblock {\em Acta Metall Mater}, 9(9):795--801, 1961.

\bibitem{CH:58:free}
J.W. Cahn and J.E. Hilliard.
\newblock Free energy of a nonuniform system. {I.} interfacial free energy.
\newblock {\em J. Chem. Phys.}, 28(2):258--267, 1958.

\bibitem{CGJP22}
Y.~Cao, A.~Giorgini, M.~Jolly, and A.~Pakzad.
\newblock Continuous data assimilation for the {3D} {L}adyzhenskaya model:
  analysis and computations.
\newblock {\em Nonlinear Anal. Real World Appl.}, 68(103659):1--29, 2022.

\bibitem{CHL20}
E.~Carlson, J.~Hudson, and A.~Larios.
\newblock Parameter recovery for the 2 dimensional Navier--Stokes equations via
  continuous data assimilation.
\newblock {\em SIAM J.Sci. Comput.}, 42(1):A250--A270, 2020.

\bibitem{C1998}
P.~Courtier, E.~Andersson, W.~Heckley, J.~Pailleux, D.~Vasiljevic, M.~Hamrud,
  A.~Hollingsworth, F.~Rabier, and M.~Fisher.
\newblock The {ECMWF} implementation of three-dimensional variational
  assimilation {(3D-Var)}. I:formulation.
\newblock {\em Q. J. R. Meteorol. Soc.}, 124(550):1783–1807, 1998.

\bibitem{C1994}
P.~Courtier, J.N. Thepaut, and A~Hollingsworth.
\newblock A strategy for operational implementation of 4D-var, using an
  incremental approach.
\newblock {\em Q. J. Roy. Meteor. Soc.}, 120(519):1367–1387, 1994.

\bibitem{DR22}
A.E. Diegel and L.G. Rebholz.
\newblock Continuous data assimilation and long-time accuracy in a {$C^0$}
  interior penalty method for the {C}ahn-{H}illiard equation.
\newblock {\em Appl. Math. Comput.}, 424:127042, 2022.

\bibitem{G2009}
G.~Evensen.
\newblock {The ensemble Kalman filter for combined state and parameter
  estimation: Monte Carlo techniques for data assimilation in large systems}.
\newblock {\em IEEE Control Syst. Mag.}, 29(3):83--104, 2009.

\bibitem{FJT15}
A.~Farhat, M.S. Jolly, and E.S. Titi.
\newblock Continuous data assimilation for the 2d {Bénard} convection through
  velocity measurements alone.
\newblock {\em Physica D.}, 303:59--66, 2015.

\bibitem{FK2018}
P.~Fearnhead and H.R. Künsch.
\newblock Particle filters and data assimilation.
\newblock {\em Annu. Rev. Stat. Appl.}, 5(1):421–449, 2018.

\bibitem{GN20}
B.~Garcia-Archilla and J.~Novo.
\newblock Error analysis of fully discrete mixed finite element data
  assimilation schemes for the {N}avier-{S}tokes equations.
\newblock {\em Adv. Comput. Math.}, 46(61): 1--33, 2020.

\bibitem{GNT18}
B.~Garcia-Archilla, J.~Novo, and E.S. Titi.
\newblock Uniform in time error estimates for a finite element method applied
  to a downscaling data assimilation algorithm.
\newblock {\em SIAM J. Numer. Anal.}, 58:410--429, 2020.

\bibitem{GLRVZ21_CDA}
M.~Gardner, A.~Larios, L.G. Rebholz, D.~Vargun, and C.~Zerfas.
\newblock Continuous data assimilation applied to a velocity-vorticity
  formulation of the {2D} {Navier-Stokes} equations.
\newblock {\em Electronic Research Archive}, 29(3):2223--2247, 2021.

\bibitem{HairerWanner}
E.~Hairer and G.~Wanner.
\newblock {\em Solving ordinary differential equations. {II}}, volume~14 of
  {\em Springer Series in Computational Mathematics}.
\newblock Springer-Verlag, Berlin, second edition, 1996.
\newblock Stiff and differential-algebraic problems.

\bibitem{R:16:firedrake}
D.A. Ham, P.H.J. Kelly, L.~Mitchell, C.J. Cotter, R.C. Kirby, K.~Sagiyama,
  N.~Bouziani, S.~Vorderwuelbecke, T.J. Gregory, J.~Betteridge, D.R. Shapero,
  R.W. Nixon-Hill, C.J. Ward, P.E. Farrell, P.D. Brubeck, I.~Marsden, T.H.
  Gibson, M.~Homolya, T.~Sun, A.T.T. McRae, F.~Luporini, A.~Gregory, M.~Lange,
  S.W. Funke, F.~Rathgeber, G-T. Bercea, and G.R. Markall.
\newblock {\em Firedrake User Manual}.
\newblock Imperial College London and University of Oxford and Baylor
  University and University of Washington, first edition edition, 5 2023.

\bibitem{HRV24}
E.~Hawkins, L.~Rebholz, and D.~Vargun.
\newblock Removing splitting/modeling error in projection/penalty methods for
  Navier-Stokes simulations with continuous data assimilation.
\newblock {\em Communications in Mathematical Research}, 40(1):1--29, 2024.

\bibitem{X2016}
T.~Heister, L.~G. Rebholz, and M.Y. Xiao.
\newblock Flux-preserving enforcement of inhomogeneous {D}irichlet boundary
  conditions for strongly divergence-free mixed finite element methods for flow
  problems.
\newblock {\em J. Math. Anal. Appl.}, 438(1):507--513, 2016.

\bibitem{Hoke_Anthes_1976_MWR}
J.E. Hoke and R.A. Anthes.
\newblock The initialization of numerical models by a dynamic-initialization
  technique.
\newblock {\em Monthly Weather Review}, 104(12):1551--1556, 1976.

\bibitem{HJK18}
J.~Honeycutt, H.~Johnson, and S.~Kelly.
\newblock Using data assimilation to better predict contaminant transport in
  fluids.
\newblock {\em SIAM Undergraduate Research Online}, 11:1--16, 2018.

\bibitem{IMT20}
A.H. Ibdah, C.F. Mondaini, and E.S. Titi.
\newblock {Fully discrete numerical schemes of a data assimilation algorithm:
  uniform-in-time error estimates}.
\newblock {\em IMA J. Numer. Anal.}, 40(4):2584--2625, 11 2019.

\bibitem{Jolly_Martinez_Olson_Titi_2018_blurred_SQG}
M.S. Jolly, V.R. Martinez, E.J. Olson, and E.S. Titi.
\newblock Continuous data assimilation with blurred-in-time measurements of the
  surface quasi-geostrophic equation.
\newblock {\em Chin. Ann. Math. Ser. B}, 40(5):721--764, 2019.

\bibitem{Kalman_1960_JBE}
R.E. Kalman.
\newblock A new approach to linear filtering and prediction problems.
\newblock {\em J. Basic Eng.}, 82(1):35--45, 1960.

\bibitem{LLC2019}
A.~Larios, L.G. Rebholz, and C.~Zerfas.
\newblock Global in time stability and accuracy of {IMEX}-{FEM} data
  assimilation schemes for {N}avier-{S}tokes equations.
\newblock {\em Comput. Methods Appl. Mech. Engrg.}, 345:1077--1093, 2019.

\bibitem{Larios_Victor_2019}
A.~Larios and C.~Victor.
\newblock Continuous data assimilation with a moving cluster of data points for
  a reaction diffusion equation: A computational study.
\newblock {\em Commun. Comp. Phys.}, 29(4):1273--1298, 2021.

\bibitem{laytonBook}
W.~Layton.
\newblock {\em An {I}ntroduction to the {N}umerical {A}nalysis of {V}iscous
  {I}ncompressible {F}lows}.
\newblock SIAM, Philadelphia, 2008.

\bibitem{Lewis_Lakshmivarahan_2008}
J.~Lewis and S.~Lakshmivarahan.
\newblock Sasaki\'s pivotal contribution: calculus of variations applied to
  weather map analysis.
\newblock {\em Mon. Weather Rev.}, 136(9):3553--3567, 2008.

\bibitem{L2023}
X. Li, E.V. Hawkins, L.G. Rebholz, and D.~Vargun.
\newblock Accelerating and enabling convergence of nonlinear solvers for
  {N}avier-{S}tokes equations by continuous data assimilation.
\newblock {\em Comput. Methods Appl. Mech. Engrg.}, 416:1--17, 2023.

\bibitem{LGHL2023}
X. Li, G.~Wei, X.M. He, and T.~Lin.
\newblock Variational data assimilation and its decoupled iterative numerical
  algorithms for {Stokes–Darcy} model.
\newblock {\em SIAM J. Sci. Comput.}, 2023.

\bibitem{Miranville:19:CH}
A.~Miranville.
\newblock {\em The {Cahn--Hilliard} Equation: Recent Advances and
  Applications}.
\newblock SIAM, 2019.

\bibitem{OR24}
M.~Olshanskii and L.~Rebholz.
\newblock Local conservation laws of continuous {G}alerkin method for the
  incompressible {N}avier-{S}tokes equations in {EMAC} form.
\newblock {\em Computer Methods in Applied Mechanics and Engineering},
  418B(116583):1--14, 2024.

\bibitem{R2007}
F.~Rawlins, S.P. Ballard, K.J. Bovis, A.M. Clayton, D.~Li, G.W. Inverarity,
  A.C. Lorenc, and T.J. Payne.
\newblock The met office global four-dimensional variational data assimilation
  scheme.
\newblock {\em Q. J. R. Meteorol. Soc.}, 133(623):347–362, 2007.

\bibitem{RZ21}
L.G. Rebholz and C.~Zerfas.
\newblock Simple and efficient continuous data assimilation of evolution
  equations via algebraic nudging.
\newblock {\em Numer. Methods Partial Differ. Equ.}, 37(3):2588--2612, 2021.

\bibitem{SJLLLS18}
P.~Schroeder, V.~John, P.~Lederer, C.~Lehrenfeld, G.~Lube, and J.~Schoberl.
\newblock On reference solutions and the sensitivity of the {2D}
  {K}elvin-{H}elmholtz instability problem.
\newblock {\em Computers and Mathematics with Applications}, 77:1010--1028,
  2019.

\bibitem{ZRSI19}
C.~Zerfas, L.~Rebholz, M.~Schneier, and T.~Iliescu.
\newblock Continuous data assimilation reduced order models of fluid flow.
\newblock {\em Comput. Methods Appl. Mech. Eng.}, 357(112596):1--21, 2019.

\bibitem{zou_navon_ledimet_1992}
X.~Zou, I.M. Navon, and F.X. Ledimet.
\newblock An optimal nudging data assimilation scheme using parameter
  estimation.
\newblock {\em Q. J. R. Meteorol. Soc}, 118(508):1163--1186, 1992.

\end{thebibliography}


\end{document}